\theoremstyle{plain}
\newtheorem{theorem}{Theorem}[section]
\theoremstyle{plain}
\newtheorem{corollary}[theorem]{Corollary}
\theoremstyle{plain}
\newtheorem{definition}[theorem]{Definition}
\theoremstyle{plain}
\newtheorem{lemma}[theorem]{Lemma}
\theoremstyle{remark}
\newtheorem{remark}[theorem]{Remark}
\theoremstyle{plain}
\newtheorem{proposition}[theorem]{Proposition}
\numberwithin{equation}{section}
\numberwithin{figure}{section}
\theoremstyle{plain}
\newcommand{%
	\begingroup
	\def\svgwidth{0.8\columnwidth}
	\import{./figures/}{.pdf_tex}
	\endgroup
}[2][0.8]{%
	\begingroup
	\def\svgwidth{#1\columnwidth}
	\import{./figures/}{#2.pdf_tex}
	\endgroup
}
\begin{document}

\title[Scalar Curvature Rigidity]{Scalar curvature rigidity of domains in a 3-dimensional warped product}

\author{Xiaoxiang Chai}
\address{Department of Mathematics, POSTECH, Pohang, Gyeongbuk, South Korea}
\email{ xxchai@kias.re.kr, xxchai@postech.ac.kr}

\author{Gaoming Wang}
\address{Yau Mathematical Sciences Center, Tsinghua University, Beijing, 100084, China}
\email{gmwang@tsinghua.edu.cn}
\address{}

\subjclass{53C24, 49Q20}
\keywords{Scalar curvature, mean curvature, stable capillary surface, prescribed mean curvature, rigidity, foliation.}

\begin{abstract}
  A warped product with a spherical factor and a logarithmically concave 
  warping function satisfies a scalar curvature rigidity of the Llarull type. We
  develop a scalar curvature rigidity of the Llarull type for a general class of domains
  in a three dimensional
  spherical warped product. In the presence of rotational symmetry, we identify this class of domains as those satisfying a boundary condition analogous to the
  logarithmic concavity of the warping function. 
\end{abstract}

{\maketitle}

\section{Introduction}

Llarull proved a scalar curvature rigidity
theorem for the standard $n$-spheres. 
\begin{theorem}[{\cite{llarull-sharp-1998}}]
  \label{llarull}Let $g$ be a smooth metric on the n-sphere with the metric
  comparison $g \geqslant \bar{g}$ and the scalar curvature comparison $R_g
  \geqslant n (n - 1)$. Then $g = \bar{g}$.
\end{theorem}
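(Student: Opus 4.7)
The plan is to run Llarull's original spinorial argument. Because $g \geqslant \bar{g}$, the identity map $\mathrm{id}:(S^n,g)\to(S^n,\bar{g})$ has differential of operator norm at most one, and in particular is area-non-increasing. First I would pull back the (complex) spinor bundle $S_{\bar{g}}$ of the round sphere via the identity to obtain a twisting bundle $E\to(S^n,g)$ equipped with the pulled-back connection, and then form the twisted Dirac operator $D_E$ acting on sections of $S_g\otimes E$.

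Next I would invoke the twisted Weitzenb\"ock--Lichnerowicz formula
\begin{equation}
  D_E^2 \;=\; \nabla^{\ast}\nabla \;+\; \tfrac{R_g}{4} \;+\; \mathcal{R}^E,
\end{equation}
where $\mathcal{R}^E$ is the endomorphism built from the curvature of $E$. The crucial pointwise estimate is that, since the round sphere has constant sectional curvature one and the identity is a contraction on two-planes, one has $\mathcal{R}^E \geqslant -\tfrac{n(n-1)}{4}$ as a symmetric endomorphism, with equality at a point precisely when $g=\bar{g}$ there. Combined with $R_g\geqslant n(n-1)$ this yields $D_E^2\geqslant \nabla^{\ast}\nabla$ in the integrated sense.

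The topological input enters through the Atiyah--Singer index theorem: the index of $D_E$ equals $\langle \hat{A}(S^n)\,\mathrm{ch}(E),[S^n]\rangle$, which in even dimension computes (up to sign) the Euler characteristic $\chi(S^n)=2\neq 0$; in odd dimension one stabilizes by crossing with a circle. Consequently $D_E$ admits a nontrivial harmonic section $\sigma$. Feeding $\sigma$ into the integrated identity forces $\nabla\sigma\equiv 0$, $R_g\equiv n(n-1)$, and $\mathcal{R}^E\sigma\equiv -\tfrac{n(n-1)}{4}\sigma$ everywhere.

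The main obstacle I anticipate is the pointwise equality analysis: one must show that the curvature endomorphism $\mathcal{R}^E$ can only saturate the bound $-\tfrac{n(n-1)}{4}$ at points where the eigenvalues of $g$ relative to $\bar{g}$ are all exactly one. This is done by diagonalizing $g$ against $\bar{g}$ in an orthonormal frame, writing $\mathcal{R}^E$ explicitly using Clifford multiplication on $S_g\otimes E$, and checking that the Cauchy--Schwarz step in the estimate is strict unless each eigenvalue equals one. Once this rigidity of the curvature estimate is in hand, the parallelism of $\sigma$ propagates the pointwise equality globally and concludes $g=\bar{g}$; the index computation and the Weitzenb\"ock algebra are by now standard.
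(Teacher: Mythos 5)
Your sketch faithfully reconstructs Llarull's original spinorial argument from \cite{llarull-sharp-1998}, which is precisely the reference the paper attaches to Theorem \ref{llarull}; in that sense the proposal is correct, and the curvature bound $\mathcal{R}^E \geqslant -\tfrac{n(n-1)}{4}$ together with the equality analysis via diagonalization of $g$ against $\bar g$ is indeed the crux. However, it is worth contrasting your route with what the paper actually develops: the authors never reprove Theorem \ref{llarull} by spinors. Their technology is a stable capillary $\mu$-bubble argument in dimension three, combined with a tangent-cone analysis at the poles $t = t_\pm$ where $\psi(t) = \sin t$ degenerates (see Section \ref{sec:cone} and Remark \ref{alternative to llarull}), and this recovers Theorem \ref{llarull} only for $n = 3$. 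What each approach buys is quite different. The spinorial route you take works in every dimension, requires a spin structure, and rests on the Atiyah--Singer index theorem and the Weitzenb\"ock algebra; the equality step is a pointwise Clifford-algebraic computation. The paper's route avoids spinors and index theory entirely, replacing them with the second variation of a capillary functional, a Gauss--Bonnet argument (Lemma \ref{lm:key in lower bound}, which incorporates the Ko--Yao inequality \eqref{ko-yao}), and a foliation by constant $H - \bar h$ surfaces; it is dimension-restricted, but in exchange it extends to domains with non-level-set boundary, to general warping functions, and to conical background metrics as in Theorem \ref{conical llarull}. One small caution on your sketch: the index computation in odd dimensions is not merely ``crossing with a circle'' --- one must verify that the product metric on $S^n \times S^1$ still satisfies the metric and scalar-curvature comparisons against a suitable model, which requires a mild but non-trivial argument in Llarull's paper. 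The even-dimensional index is also not literally the Euler characteristic of $S^n$, but a nonzero multiple of it obtained from $\mathrm{ch}(S_{\bar g})$; the nonvanishing, not the exact value, is what matters.
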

An interesting feature of this scalar
curvature rigidity for spheres comparing to that of torus
{\cite{schoen-existence-1979}}, the Euclidean space {\cite{schoen-proof-1979}}
and the hyperbolic space {\cite{minoo-scalar-1989}} is the requirement of a
metric comparison $g \geqslant \bar{g}$. A counterexample without the requirement $g \geqslant \bar{g}$ was given in \cite{brendle-scalar-2011} (for half $n$-sphere fixing the geometry of the boundary), but $g\geqslant \bar g$ can be weakened, see
{\cite{listing-scalar-arxiv-2010}}. Llarull also showed that the condition $g \geqslant \bar{g}$ can be
formulated more generally as the existence of a distance non-increasing map
$F : (M, g) \to (\mathbb{S}^n, \bar{g})$ of non-zero degree.

Recently, there were efforts in extending Llarull's theorem to a spherical
warped product
\begin{equation}
  (\bar{M}^n, \bar{g}) := ([t_-, t_+] \times S^{n - 1}, \mathrm{d} t^2 + \psi
  (t)^2 g_{\mathbb{S}^{n - 1}}) \text{ with } t_- < t_+, \text{ } (\log
  \psi)'' < 0, \label{spherical warped product std}
\end{equation}
by spinors {\cite{cecchini-scalar-2024}}, {\cite{bar-scalar-2024}},
{\cite{wang-scalar-2023}}, by $\mu$-bubbles {\cite{gromov-four-2021}},
{\cite{hu-rigidity-2023}} and by spacetime harmonic functions
{\cite{hirsch-rigid-2022}}.

We are interested in the Llarull type theorems of domains in the spherical warped
product \eqref{spherical warped product}. Although the form \eqref{spherical
warped product} can also be considered as a domain in a larger spherical
warped product, our focus will be on such domains with boundaries that are not
necessarily given by $t$-level sets. Previously, this direction has been
explored by Lott {\cite{lott-index-2021}}, Wang-Xie
{\cite{wang-dihedral-2023}} and Chai-Wan {\cite{chai-scalar-2024}}, which all
involved spinors.
Gromov first suggested the use of stable capillary minimal surface in
studying the scalar curvature rigidity of Euclidean balls (see Section 5.8.1
of {\cite{gromov-four-2021}};
\text{{\itshape{Spin}}}-\text{{\itshape{Extremality}}} \text{{\itshape{of}}}
\text{{\itshape{Doubly}}} \text{{\itshape{Punctured}}}
\text{{\itshape{Balls}}}) and Li {\cite{li-polyhedron-2020}} in
three-dimensional Euclidean dihedral rigidity. 
In this article, we make use of the stable capillary surfaces with prescribed
(varying) contact angle and prescribed mean curvature, or in 
the terminology of Gromov {\cite{gromov-four-2021}}, (part of) the boundary of a stable
capillary $\mu$-bubble.
This is also a further
development of the previous work {\cite{chai-scalar-2023-arxiv}} and the recent work of Ko-Yao \cite{ko-scalar-2024} in the Euclidean case.

We consider three dimensions, fix a metric $g_{S^2}$ of positive
Gauss curvature on the $2$-sphere $S^2$, and the following
\begin{equation}
  (\bar{M}^3, \bar{g}) := ([t_-, t_+] \times S^{2}, \mathrm{d} t^2 + \psi
  (t)^2 g_{{S}^{2}}) \text{ with } t_- < t_+, \text{ } (\log
  \psi)'' < 0. \label{spherical warped product}
\end{equation}
We reserve $g_{\mathbb{S}^2}$ for the standard round metric on the 2-sphere $S^2$.
We use $t_x$ to indicate the $t$ coordinate and $p_x$
to denote the $S^2$ coordinate of a point $x\in M \subset [t_-,t_+]\times S^2$. Let $K(p)$ be the Gauss
curvature at $p\in (S^2,g_{S^2})$.

Let $P_{\pm} = \{t_{\pm} \} \times {S}^2$, we always assume that $M$ lies between $P_\pm$ such that $P_\pm \cap \partial M$ is non-empty. If $P_\pm \cap \partial M$ contains only a point, we set this point to be $p_\pm$. Let $\partial_s M =
\partial M\backslash (P_+ \cup P_-)$ and $\bar{X}$ be the unit outward normal
of $\partial_s M$ in $M$ with respect to $\bar{g}$.
We fix 
\begin{equation}
    \bar{h}(t)=2\psi'(t)/\psi(t)
\end{equation}
and $\bar{\gamma}$ to be the dihedral angles formed by
$\partial_s M$ and $\Sigma_t = (\{t\} \times {S}^2) \cap M$ that is given 
by
\begin{equation}
  \cos \bar{\gamma} = \bar{g} (\bar{X}, \partial_t) . \label{angle}
\end{equation}
We call the $t$-suplevel set $\Omega_t^+$ and $t$-sublevel set $\Omega_t^-$.
We also need to fix some more
conventions for the direction of the unit normal, the sign of the mean
curvatures and the dihedral angles. Let $\Sigma$ be a surface with boundary on $\partial_sM$ and
separates $P_+ \cap \partial M$ and $P_- \cap \partial M$, we always fix the
direction of the unit normal $N$ of $\Sigma$ to be the direction which points
inside of the region bounded by $\Sigma,$ $P_+ \cap \partial M$ and
$\partial_s M$. The mean curvature is then the trace of the second fundamental
form $\nabla N$. We fix $\gamma_{\Sigma}$ to be the contact angle formed by $\Sigma$ and $\partial_sM$, that is,
$\cos \gamma_{\Sigma} = \langle X, N \rangle$. For the mean
curvature of $\partial_s M$, it is always computed with respect to the outward
unit normal. The geometric quantity on $(M, \bar{g})$ comes with a bar unless
otherwise specified (see Figure \ref{fig:notations}).

As is well known, the warped product metric
\eqref{spherical warped product} is conformal to a direct product metric. Indeed, let $s = \int^t \tfrac{1}{\psi (\tau)} \mathrm{d}
\tau$, then $\mathrm{d} s = \tfrac{1}{\psi (t)} \mathrm{d} t$ and
\begin{equation}
  \mathrm{d} t^2 + \psi (t)^2 g_{{S}^2} = \psi (t)^2 \mathrm{d} s^2 +
  \psi (t)^2 g_{{S}^2} = \psi (t)^2 (\mathrm{d} s^2 + g_{{S}^2})
  \label{conformally related metric}
\end{equation}
where $t = t (s)$ is implicitly given by $s = \int^t \tfrac{1}{\psi (\tau)}
\mathrm{d} \tau$.

Now we state our scalar curvature rigidity result. (For a quick feel, we refer to Theorem \ref{smooth boundary llarull}.)

\begin{theorem} \label{rigidity depending on structures}
    Assume that $\bar{g}$ is a Riemannian metric on $\bar{M}^3$ as in \eqref{spherical warped product}. Assume that $M$ is a domain in $\bar{M}$ such that $\partial_s M$ is convex with respect to the
  conformally related metric $\mathrm{d} s^2 + g_{S^2}$ where $s$ is given in
  \eqref{conformally related metric} and $g$ is a smooth metric on $M$. If $g$ satisfies the metric comparison
    \begin{equation}
        g \geqslant \bar{g},
    \end{equation}
    and the scalar curvature comparison
    \begin{equation}
        R_g \geqslant R_{\bar{g}},
    \end{equation}
    and the mean curvature comparison
    \begin{equation}
        H_{\partial_s M} \geqslant \bar{H}_{\partial_s M}
    \end{equation}
    along $\partial_s M$, and
    further that
    $P_\pm \cap M$ and $\psi(t)$ satisfy either of the following conditions: 
    \begin{enumerate}
        \item \label{smooth} $\psi(t_\pm)>0$, $P_\pm \cap \partial M$ only contains a point where $\partial M$ is smooth under both metrics $g$ and $\bar g$;
        \item \label{item euclid cone} $\psi(t_\pm)>0$, $P_\pm \cap \partial M$ only contains a point where $\partial M$ where the tangent cone of $(M,g)$ at $p_\pm$ is Euclidean, and the tangent cone of $(M,\bar g)$ at $p_\pm$ is a round (solid) circular cone and that $(|t-t_\pm|^{-1}\Sigma_t,|t-t_\pm|^{-2} \bar{g})$ converges to its axial section as $t\to t_\pm$;
        \item \label{item metric cone} $\psi(t) = a|t-t_\pm| +o (|t-t_\pm|^2)$ as $t\to t_\pm$ where $a>0$ is a constant such that the tangent cone of $(M,\bar g)$ at $p_\pm$ is of non-negative Ricci curvature and $(M,g)$ admits a metric tangent cone at $p_\pm$;
        \item \label{item simple} $\psi(t)>0$ on $[t_-,t_+]$, $P_\pm\cap \partial M$ is a disk, the mean curvatures $\pm H_{P_\pm \cap\partial M} \geqslant \pm \bar{h}_{P_\pm \cap \partial M} = \pm\bar{h} (t_\pm)$ and the dihedral angles $\pm\gamma_{P_+ \cap \partial M} \geqslant
    \pm\bar{\gamma}  |_{P_\pm \cap \partial M}$ formed by $\partial_s M$ and $P_\pm \cap
    \partial M$,
    \end{enumerate}
    then $g = \bar g$.
\end{theorem}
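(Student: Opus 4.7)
The plan is to build a smooth foliation of $(M,g)$ whose leaves are stable capillary surfaces with mean curvature $\bar h\circ t$ and contact angle $\bar\gamma$, matching the model foliation $\{\Sigma_t\}$ of $(M,\bar g)$, and then to propagate rigidity leaf by leaf. To produce the first leaf I would set up a capillary $\mu$-bubble variational problem via the weighted functional
\begin{equation}
    \mathcal A(E) = \mathcal H^2(\partial^* E \cap \mathrm{int}\, M)
    - \int_{\partial^* E \cap \partial_s M} \cos\bar\gamma\, d\mathcal H^2
    - \int_E (\bar h\circ t)\, d\mathrm{vol}_g
\end{equation}
on Caccioppoli sets $E\subset M$ separating $P_+\cap \partial M$ from $P_-\cap \partial M$. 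The Euler--Lagrange conditions are exactly that the reduced boundary in $\mathrm{int}\, M$ has generalized mean curvature $\bar h\circ t$ and meets $\partial_s M$ with contact angle $\bar\gamma$; on $(M,\bar g)$ each sublevel $\Omega_t^-$ is critical. The four boundary hypotheses supply the barriers needed for existence and non-collapse of a minimizer in $(M,g)$: case (4) uses the mean-curvature and angle inequalities on the disks $P_\pm \cap \partial M$ directly; case (1) uses smoothness of $\partial M$ at the poles together with geodesic-sphere comparison; cases (2)--(3) use the tangent-cone hypothesis at $p_\pm$ combined with $g\geqslant \bar g$ to furnish cone barriers.

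Second, I insert the test function $\phi\equiv 1$ into the second variation of $\mathcal A$ at a regular minimizing leaf $\Sigma$. The bulk contribution involves $\mathrm{Ric}_g(N,N) + |A|^2 + \partial_N(\bar h\circ t)$, and the boundary contribution on $\partial\Sigma\subset \partial_s M$ depends on $II_{\partial_s M}$, $\bar h\cot\bar\gamma$, and $\partial_\nu \bar\gamma$. Using the Gauss equation, Gauss--Bonnet on $\Sigma$ (topologically a sphere by the barrier construction), the hypothesis $R_g\geqslant R_{\bar g}$ together with the warped-product identity $R_{\bar g}=2K/\psi^2-4\psi''/\psi-2(\psi')^2/\psi^2$, and $H_{\partial_s M}\geqslant \bar H_{\partial_s M}$, the stability inequality collapses to a non-positive algebraic expression whose sign is forced by the log-concavity $(\log\psi)''<0$. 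The convexity of $\partial_s M$ in the conformal metric $\mathrm{d}s^2+g_{S^2}$, pushed back through the conformal factor $\psi^2$, is exactly what is needed to absorb the $\bar\gamma$-dependent boundary contribution. Equality throughout then forces $\Sigma$ to be totally umbilical with induced metric $\psi(t)^2 g_{S^2}$, mean curvature $\bar h(t)$, and $g=\bar g$ along $\Sigma$ with $N=\partial_t$.

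Third, by deforming the variational problem (for instance shifting the level via an additive parameter in $\bar h$, or restricting to subdomains $[t_-,t_*]\times S^2$), I produce a $1$-parameter family of stable minimizers foliating $M$. The Jacobi equation on this foliation, in its equality form, forces the lapse to be identically $1$, so the foliation is geodesic and one obtains $g=\mathrm{d}t^2+\psi(t)^2 g_{S^2}=\bar g$; in cases (2)--(3) the tangent-cone hypothesis on $(M,g)$ guarantees that the foliation extends continuously to the conical apices $p_\pm$. The main obstacle is the stability calculation of the second paragraph: with both a prescribed non-constant mean curvature $\bar h\circ t$ and a varying capillary angle $\bar\gamma$, one must verify that the boundary term assembled from $II_{\partial_s M}$, $\partial_\nu \bar\gamma$ and $\bar h\cot\bar\gamma$ combines, through the conformal convexity of $\partial_s M$ and the log-concavity of $\psi$, into precisely the sharp-sign expression that closes the argument. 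The singular cases (2)--(3) compound this difficulty by requiring a $\mu$-bubble regularity and cone-comparison theory at the apices strong enough to match the model axial section.
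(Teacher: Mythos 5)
Your proposal follows the paper's general strategy---a stable capillary $\mu$-bubble, barriers from the four boundary hypotheses, a second-variation estimate with test function $1$, infinitesimal rigidity, and a foliation that propagates rigidity---so the architecture is aligned. But there are two concrete problems worth flagging.

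First, a small but real error: the regular minimizing leaf $\Sigma$ has boundary on $\partial_s M$ and is topologically a disk, not a sphere. Gauss--Bonnet contributes $2\pi\chi(\Sigma)=2\pi$ together with a geodesic-curvature boundary term; treating $\Sigma$ as a sphere changes the Euler characteristic and erases the boundary contribution you then need to control.

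Second, and more seriously, the sentence ``the stability inequality collapses to a non-positive algebraic expression whose sign is forced by the log-concavity $(\log\psi)''<0$'' compresses away the genuine technical heart of the proof. In the paper, log-concavity only enters the bulk term via the Cauchy--Schwarz step $\partial_N\bar h\geqslant\bar h'$ (which uses $g\geqslant\bar g$ and $\bar h'<0$), and the interior contribution actually reduces \emph{exactly} to $\frac{2K(p_x)}{\psi^2(t_x)}$ rather than to something non-positive. The real work is the inequality (Lemma 2.3 in the paper)
\begin{equation}
\int_{\Sigma}\frac{K(p_x)}{\psi(t_x)^2}\,\mathrm{d}\sigma
+\int_{\partial\Sigma}\frac{1}{\psi(t_x)\sin\bar\gamma}
\bigl(\hat H_{\partial_sM}-\partial_{\psi\eta}\bar\gamma\bigr)\mathrm{d}\lambda\geqslant 2\pi,
\end{equation}
which is \emph{not} an algebraic sign consequence of convexity or log-concavity. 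Its proof requires: (i) the pointwise Ko--Yao inequality $\hat H-\nabla_\eta\bar\gamma\geqslant\langle T,\,e_2\nabla_1\bar\gamma+(\hat H-\nabla_2\bar\gamma)e_1\rangle$, which is where the convexity of $\partial_s M$ in the conformal metric actually enters; (ii) a Newton-tensor rewrite and an application of the divergence theorem on the region of $\partial_s M$ between $\partial\Sigma$ and $\partial\Sigma(t_+)$, where a Gauss--Codazzi cancellation makes one of the resulting integrals vanish; and (iii) a second divergence-theorem argument with the divergence-free tensor $\hat G=\mathrm{Ric}_{\hat g}-\frac12 R_{\hat g}\hat g$ contracted against $\partial_t$, which converts the interior term $\int_\Sigma K(p_x)$ into $\int_{\Sigma(t_+)}K(p_x)$ plus a flux term over the side boundary. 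Only after these steps does Gauss--Bonnet on the $t$-level set $\Sigma(t_+)$ close the estimate. Without this mechanism your stability step has no way to absorb the varying prescribed angle $\bar\gamma$ and the non-constant $\bar h$, which you yourself identify as ``the main obstacle'' but do not resolve. The rest of your outline (infinitesimal rigidity, IFT-based local foliation, the ODE-type monotonicity of $H-\bar h$ along the leaves, and the final identification $g=\bar g$) is consistent with the paper, and your idea of ``shifting the level via an additive parameter in $\bar h$'' is essentially the paper's foliation by surfaces of \emph{constant} $H-\bar h$.
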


\begin{remark}
    The mean curvature comparisons can be reformulated as $H_{\partial M}
\geqslant \bar{H}_{\partial M}$ on $\partial M$ if all mean curvatures are
computed with respect to the outward unit normal. For convergence of sequences of Riemannian manifolds and notions of tangent cones at a point of a Riemannian manifold, we refer to
{\cite[Chapter 8]{burago-course-2001}}. We believe that the extra condition on the tangent cone of $(M,\bar g)$ at $p_\pm$ can be dropped in item (\ref{item euclid cone}).
\end{remark}

We use a special case of $(M,\bar{g})$ to illustrate the convexity of $\partial_s M$
with respect to the metric $\mathrm{d}s^2+g_{S^2}$ given in \eqref{conformally related metric}. Assume that $g_{S^2}$ is just the standard round
metric, that is,
\begin{equation} g_{S^2} = g_{\mathbb{S}^2} = \mathrm{d} r^2 + \sin^2 r \mathrm{d} \theta^2,
   \text{ } r \in [0, \pi], \text{ } \theta \in \mathbb{S}^1, \label{polar coordinates of S2}
   \end{equation}
   using the polar coordinates
and $M$ is given by
\[ M = \{(t, r, \theta) : \text{ } t \in [t_-, t_+], \text{ } r \leqslant \rho
   (t) < \tfrac{\pi}{2}, \text{ } \theta \in \mathbb{S}^1 \}  \]
   for some positive function $\rho(t)$ on $[t_-,t_+]$.
In this case, the prescribed contact angle $\bar{\gamma}$ only depends on $t$.
It is easy to check that the convexity of $\partial_s M$ with respect to
$\mathrm{d} s^2 + g_{\mathbb{S}^2}$ is equivalent to $\tfrac{\mathrm{d}
\bar{\gamma}}{\mathrm{d} s} < 0$. Also, we find that \begin{equation}
\tfrac{\mathrm{d}
\bar{\gamma}}{\mathrm{d} t} < 0\label{boundary log concavity}
\end{equation}
using \eqref{conformally related metric} or that the angles are conformally invariant. The mean curvature of a $t$-level set
is given by \[\bar{h} (t) := 2 \psi^{- 1} \tfrac{\mathrm{d} \psi}{\mathrm{d} t}
= 2 \tfrac{\mathrm{d} (\log \psi)}{\mathrm{d} t}.\] The logarithmic concavity
of $\psi$ is equivalent to the more geometric statement that the mean curvature of
the $t$-level set is monotonically decreasing as $t$ increases. The condition
$\tfrac{\mathrm{d} \bar{\gamma}}{\mathrm{d} t} < 0$ can be viewed as a
boundary analog of the logarithmic concavity. 
Geometrically, \eqref{boundary log concavity} says that the dihedral angles \eqref{angle} formed by $\Sigma_t$
and $\partial_s M$ monotonically decreases along the $\partial_t$ direction
with respect to the metric $\bar{g}$. This condition answers a question raised by
Gromov at the end of {\cite[Section 5.8.1]{gromov-four-2021}}. See also Chai-Wan {\cite[Theorem
1.1]{chai-scalar-2024}}.

We can have various scalar curvature rigidity assuming different combinations of geometric structures at $P_\pm$. The following is a simple corollary of Theorem \ref{rigidity depending on structures} where both $P_\pm \cap \partial M$ take the structure as specified in item (\ref{smooth}) of Theorem \ref{rigidity depending on structures}.

\begin{theorem} \label{smooth boundary llarull}
  Let $M$ be a smooth domain in $[t_-, t_+] \times {S}^2$ with the metric
  $\mathrm{d} t^2 + \psi (t)^2 g_{S^2}$ where $(\log \psi)'' < 0$, $\psi (t) >
  0$ on $[t_-, t_+]$. Assume that $\partial M$ is convex with respect to the
  conformally related metric $\mathrm{d} s^2 + g_{S^2}$ where $s$ is given in
  \eqref{conformally related metric} and $g$ is another metric on $M$ which
  satisfies the comparisons of:
  \begin{enumerate}
    \item the scalar curvatures $R_g \geqslant R_{\bar{g}}$,
    
    \item the mean curvatures $H_g \geqslant H_{\bar{g}}$ of the boundary
    $\partial M$ in $M$,
    
    \item and the metrics $g \geqslant \bar{g}$,
  \end{enumerate}
  then $g = \bar{g}$.
\end{theorem}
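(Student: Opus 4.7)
The plan is to realize Theorem \ref{smooth boundary llarull} as the specialization of Theorem \ref{rigidity depending on structures} under hypothesis (\ref{smooth}); the proof then reduces to a verification of hypotheses.

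First, I would decompose $\partial M = \partial_s M \cup (P_+\cap \partial M) \cup (P_-\cap \partial M)$. Following the convention preceding Theorem \ref{rigidity depending on structures}, I assume $P_\pm\cap \partial M$ is non-empty, which can be arranged by taking $t_\pm$ to be the extreme $t$-values attained on $M$. The key geometric claim is that, under the stated hypotheses, $P_+\cap \partial M$ and $P_-\cap \partial M$ each reduce to a single point. This uses smoothness of $\partial M$ in the ambient structure together with convexity of $\partial M$ in the product metric $\mathrm{d} s^2 + g_{S^2}$: on a smooth convex surface in a Riemannian product, the set of contact points with an extreme slice is a (possibly degenerate) flat piece of the surface, and strict convexity rules out a nontrivial such piece. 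Call the two touching points $p_\pm$; they are smooth points of $\partial M$ in the ambient smooth structure, and therefore smooth points under both $\bar g$ and under the smooth Riemannian metric $g$, which verifies condition (\ref{smooth}).

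Next, I would check the remaining hypotheses of Theorem \ref{rigidity depending on structures}. The convexity of $\partial_s M$ with respect to $\mathrm{d} s^2 + g_{S^2}$ is inherited from that of $\partial M$ because $\partial_s M = \partial M \setminus \{p_-, p_+\}$. The scalar curvature comparison $R_g \geq R_{\bar g}$ and metric comparison $g \geq \bar g$ are given directly. The mean curvature comparison $H_{\partial_s M}\geq \bar H_{\partial_s M}$ along $\partial_s M$ is the restriction of the assumed inequality $H_g\geq H_{\bar g}$ on $\partial M$, with both quantities computed with the outward unit normal per the paper's convention. Finally, $\psi(t_\pm)>0$ holds by assumption. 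With all four ingredients of alternative (\ref{smooth}) in hand, Theorem \ref{rigidity depending on structures} yields $g=\bar g$.

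The only substantial point in this plan is the geometric reduction that $P_\pm\cap \partial M$ consists of single smooth points. If one only assumes non-strict convexity and this touching is actually a disk, then one should invoke condition (\ref{item simple}) of Theorem \ref{rigidity depending on structures} in place of (\ref{smooth}), verifying the prescribed dihedral angle $\bar\gamma|_{P_\pm\cap\partial M}$ (coming from the tangency of $\partial M$ with $P_\pm$ in the conformal metric) and the mean curvature comparison on the disks. Either way the conclusion is the same; everything else is bookkeeping between the notation of the two statements.
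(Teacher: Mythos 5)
Your proposal is correct and takes essentially the same route as the paper: Theorem \ref{smooth boundary llarull} is presented there as a direct corollary of item (\ref{smooth}) of Theorem \ref{rigidity depending on structures}, with no proof supplied. Your write-up reconstructs and slightly fleshes out the implicit verification of hypotheses (in particular the reduction of $P_\pm\cap\partial M$ to single smooth tangency points via convexity, and the fallback to item (\ref{item simple}) should the tangency degenerate to a disk), which is exactly the bookkeeping the paper leaves to the reader.
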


Our approach toward items (\ref{smooth})-(\ref{item metric cone}) is by
construction of surfaces of prescribed mean curvature and prescribed
contact angles near $t = t_-$ which serves as \text{{\itshape{barriers}}}, a concept which now introduce. The existence of barriers enables us to
find a stable capillary $\mu$-bubble, if fact, $P_\pm\cap \partial M$ are natural barriers if we are in item (\ref{item simple}) of Theorem \ref{rigidity depending on structures}.

\begin{definition}
  \label{barrier condition}We say that a surface $\Sigma_+$ ($\Sigma_-$) whose
  boundary separates $\partial (P_+ \cap \partial M)$ and $\partial (P_- \cap
  \partial M)$ is an upper (lower) barrier if $H_{\Sigma_+} \geqslant \bar{h}
  |_{\Sigma_+}$ ($H_{\Sigma_-} \leqslant \bar{h} |_{\Sigma_-}$) and \
  $\gamma_{\Sigma_+} \geqslant \bar{\gamma} |_{\partial \Sigma_+ \cap \partial
  M}$ ($\gamma_{\Sigma_-} \leqslant \bar{\gamma} |_{\partial \Sigma_- \cap
  \partial M}$) along $\partial \Sigma_+$ ($\partial \Sigma_-$). We call
  $\Sigma_+$ and $\Sigma_-$ are a set of barriers if $\Sigma_+$ and $\Sigma_-$
  are respectively an upper barrier and a lower barrier and $\Sigma_+$ is
  closer to $P_+$ than $\Sigma_-$.
\end{definition}

Our construction of barriers near
$t=t_\pm$ are purely local, and in this way, our proof of items (\ref{smooth})-(\ref{item metric cone}) can be reduced to the last item of Theorem \ref{rigidity depending on structures}, that is, the case with a set of barriers.
In particular, if both $P_\pm \cap \partial M$ take the structure as specified in item (\ref{item metric cone}) of Theorem \ref{rigidity depending on structures}, we obtain a genuine generalization of Theorem \ref{llarull},
since in the case of round metric, $\psi (t) = \sin t$, $t \in [0, \pi]$ is
allowed to take zero values at $t = 0$ and $t = \pi$. 
Hu-Liu-Shi {\cite{hu-rigidity-2023}} (see also Gromov
{\cite{gromov-four-2021}}) used a $\mu$-bubble approach for Theorem
\ref{llarull}. As indicated earlier, we use the capillary version of the $\mu$-bubble approach. However, our method differs from theirs in a technical manner
when handling 
\[
\psi (t) = \sin t = t +o(|t|)
\] near $t = 0$. (Similarly, near $t=\pi$.) They constructed a family
of perturbations on the function $2 \psi' / \psi$ while our strategy is to perform a
careful tangent cone analysis near $t = 0$ or $t=\pi$. As a result of this new strategy, we are able to
generalize the Llarull Theorem \ref{llarull} to the case where the background metric $\bar{g}$
are equipped with antipodal conical points.

\begin{theorem}
  \label{conical llarull}Let $n=3$ and $(\bar{M}, \bar{g})$ be a three dimensional warped product given in
  \eqref{spherical warped product std} such that
  \[ \psi (t_{\pm}) = a_{\pm} |t - t_{\pm} | + o (|t - t_{\pm} |), \text{ } 0
     < a_{\pm} \leqslant 1, \]
  If $g$ is another smooth metric on $\bar{M}$ with possible cone singularity
  at only $t = t_{\pm}$ which satisfies the comparisons of metrics $g
  \geqslant \bar{g}$ and scalar curvatures $R_g \geqslant R_{\bar{g}}$, then
  $g = \bar{g}$.
\end{theorem}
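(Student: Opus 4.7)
The plan is to deduce Theorem \ref{conical llarull} directly from item (\ref{item metric cone}) of Theorem \ref{rigidity depending on structures}, applied with $M=\bar M$. In this choice the side boundary $\partial_s M$ is empty, since $\partial M$ consists only of the two conical poles $p_\pm$ (where $\psi$ vanishes), so the convexity hypothesis on $\partial_s M$ and the mean curvature comparison along $\partial_s M$ are vacuous. Only the scalar curvature comparison $R_g \geqslant R_{\bar g}$, the metric comparison $g\geqslant \bar g$, and the endpoint structure at $p_\pm$ need to be verified.

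The main step is to check the two assumptions of item (\ref{item metric cone}): that the tangent cone of $(\bar M,\bar g)$ at each $p_\pm$ has non-negative Ricci curvature and that $(\bar M,g)$ admits a metric tangent cone at each $p_\pm$. The latter is immediate, since by hypothesis $g$ is smooth away from the two poles and has at worst a conical singularity there. For the former, the local expansion $\psi(t)=a_\pm|t-t_\pm|+o(|t-t_\pm|)$ shows that the rescaled metrics $|t-t_\pm|^{-2}\bar g$ converge to the metric cone $\mathrm{d}r^2 + a_\pm^2 r^2 g_{\mathbb{S}^2}$ over the link $(S^2, a_\pm^2 g_{\mathbb{S}^2})$. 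For a $3$-dimensional metric cone $\mathrm{d}r^2 + r^2 h$ over a $2$-dimensional link, a direct computation from the warped product Ricci formulas gives that the Ricci tensor at regular points is non-negative if and only if $\mathrm{Ric}_h \geqslant h$. Since in dimension two the Ricci tensor is scale-invariant, $\mathrm{Ric}_{a_\pm^2 g_{\mathbb{S}^2}} = g_{\mathbb{S}^2}$, and the required inequality collapses to $a_\pm \leqslant 1$, precisely the standing hypothesis.

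With both conditions confirmed, item (\ref{item metric cone}) of Theorem \ref{rigidity depending on structures} yields $g=\bar g$. The main obstacle I anticipate lies not in this reduction, which is essentially formal, but in the local barrier construction near each conical point $p_\pm$ that underlies item (\ref{item metric cone}). Concretely, one needs the tangent cone comparison (non-negative Ricci of the $\bar g$-cone, combined with $g\geqslant \bar g$) to produce, at small scales around $p_\pm$, surfaces whose mean curvatures dominate or are dominated by $\bar h$ in the sense of Definition \ref{barrier condition}. This is a purely local analysis at the cone tip, and once it is carried out the global argument proceeds by the stable capillary $\mu$-bubble method of item (\ref{item simple}) applied between the resulting upper and lower barriers.
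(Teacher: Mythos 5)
Your approach matches the paper's intent; the authors themselves state that Theorem~\ref{conical llarull} ``directly follows from the proof of item~(\ref{item metric cone}) of Theorem~\ref{rigidity depending on structures} with only slight changes'' and omit the proof, and your Ricci verification for the tangent cone $\mathrm{d}r^2 + a_\pm^2 r^2 g_{\mathbb{S}^2}$ (namely that for a three-dimensional cone over a surface $\mathrm{Ric}\geqslant 0$ away from the tip iff $\mathrm{Ric}_h\geqslant h$, which for $h=a^2 g_{\mathbb{S}^2}$ reduces to $a\leqslant 1$) is correct. Two subtleties that you pass over too quickly, though. First, with $M=\bar M$ the manifold is closed and $\partial M=\emptyset$, so both $\partial_s M$ and $P_\pm\cap\partial M$ are empty, which contradicts the standing hypothesis of the paper that $P_\pm\cap\partial M$ is non-empty; item~(\ref{item metric cone}) therefore cannot be invoked as a black box. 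The ``slight changes'' in the paper refer to rewriting the argument in the boundaryless setting --- a closed $\mu$-bubble in place of the capillary one, dropping the contact-angle terms in the stability inequality and in the Gauss--Bonnet computation of Lemma~\ref{lm:key in lower bound}, and adapting the barrier construction of Section~\ref{sec:cone} to closed surfaces --- which is a parallel (and simpler) proof, not a verification that some hypotheses are vacuous. Second, item~(\ref{item metric cone}) requires $\psi(t)=a|t-t_\pm|+o(|t-t_\pm|^2)$, and the quadratic remainder is what underlies the estimate $\bar h(t)=2/t+O(1)$ used repeatedly in the tangent-cone analysis, whereas Theorem~\ref{conical llarull} only postulates $\psi=a_\pm|t-t_\pm|+o(|t-t_\pm|)$; your reduction silently identifies the two hypotheses without noting the discrepancy.
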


Theorem \ref{conical llarull} directly follows from the proof of item (\ref{item metric cone}) of Theorem
\ref{rigidity depending on structures} with only slight changes and we omit its proof. See Remark
\ref{alternative to llarull}. Note that the condition $0 < a_{\pm} \leqslant 1$ 
ensures that the Ricci curvature of the tangent cone with respect to $\bar{g}$ at $t
= t_{\pm}$ is non-negative. The scalar curvature rigidity of the Llarull type for
$a_{\pm} > 1$ is an interesting question. One could also compare Theorem
\ref{conical llarull} with {\cite{chu-llarulls-2024}} where conical
singularities with respect to the metric $g$ are allowed at multiple points on
$S^n$.

Some of the essential difficulties of items (\ref{smooth}) and (\ref{item euclid cone}) are
already present in {\cite[Theorem 1.2 \text{{\itshape{(2)}}} and
\text{{\itshape{(3)}}}]{chai-scalar-2023-arxiv}}. In light of this, we only
give a sketch for their proof in Section \ref{barrier II}
, and refer relevant details to {\cite{chai-scalar-2023-arxiv}}.

It is possible that the inequalities in $(\log \psi)'' < 0$ and the convexity of $\partial_sM$
can be weakened in some cases. For instance, we can consider a convex disk $M$ in $(-\infty, \infty)\times S^2$ with the direct product metric
$\mathrm{d} t^2 + g_{S^2}$. In this case, $\log \psi$ vanishes.
The Llarull type rigidity Theorem \ref{smooth boundary llarull} is still valid for this $M$.

Now one could naturally ask what are other shapes of point singularities, in particular, asymptotics of $\psi$ near $t=t_\pm$, such that Theorem \ref{rigidity depending on structures} remain valid.
However, it is a quite intricate matter to which we do not have an answer at
the moment. It is also desirable to find a proof for higher dimensional
analogs of our results using the stable capillary $\mu$-bubbles. This seems to
be a promising direction to investigate being aware of the recent works
{\cite{cecchini-scalar-2024-arxiv,wang-scalar-mean-2024}}.

\

The article is organized as follows:

In Section \ref{sec:cap}, we introduce basics of stable capillary $\mu$-bubble
and we use it to show item (\ref{item simple}) of Theorem \ref{rigidity depending on structures}.

In Section \ref{sec:cone}, we use the tangent cone analysis at $t = t_-$ to
construct barriers and reduce item (\ref{item metric cone}) of Theorem \ref{rigidity depending on structures} to item (\ref{item simple}).

In Section \ref{barrier II}, we revisit our constructions in
{\cite{chai-scalar-2023-arxiv}} and use the techniques developed there to show items (\ref{item euclid cone}) and (\ref{smooth}) of Theorem \ref{rigidity depending on structures}.

\

\text{{\bfseries{Acknowledgments}}} X. Chai was supported by the National
Research Foundation of Korea (NRF) grant funded by the Korea government (MSIT)
(No. RS-2024-00337418) and an NRF grant No. 2022R1C1C1013511.
G. Wang was supported by the China Postdoctoral Science Foundation (No. 2024M751604).

\

\section{Stable capillary $\mu$-bubble}\label{sec:cap}

In this section, we introduce the functional \eqref{action} whose minimiser is
a stable capillary $\mu$-bubble. We introduce the \text{{\itshape{barrier}}}
condition which combining with a maximum principle ensures the existence of a
regular minimiser to \eqref{action}. By a rigidity analysis on the second
variation of \eqref{action}, we conclude the proof of item (\ref{item simple}) of Theorem \ref{rigidity depending on structures}.

\subsection{Notations}\label{general notations}We set up some notations. Let
$E \subset M$ be be a set such that $\partial E \cap M$ is a regular surface
with boundary which we name it $\Sigma$. We set
\begin{itemize}
  \item $N$, unit normal vector of $\Sigma$ pointing inside $E$;
  
  \item $\nu$, unit normal vector of $\partial \Sigma$ in $\Sigma$ pointing
  outside of $\Sigma$;
  
  \item $\eta$, unit normal vector of $\partial \Sigma$ in $\partial M$
  pointing outside of $\partial E \cap \partial M$;
  
  \item $X$: unit normal vectors of $\partial M$ in $M$ pointing outside of
  $M$;
  
  \item $\gamma$: the contact angle formed by $\Sigma$ and $\partial M$ and
  the magnitude of the angle is given by $\cos \gamma = \langle X, N \rangle$,
  
  \item $\langle Y, Z \rangle = g (Y, Z)$, the inner product of vectors $Y$
  and $Z$ with respect to the metric $g$;
  
  \item $\langle Y, Z \rangle_{\bar{g}} = \bar{g} (Y, Z)$, the inner product
  of vectors $Y$ and $Z$ with respect to the metric $\bar{g}$.
\end{itemize}
See Figure \ref{fig:notations}. We use a bar on every
quantity to denote that the quantity is computed with respect to the metric
$\bar{g}$ given in \eqref{spherical warped product}.

\begin{figure}[ht]
    \centering
	\begingroup
	\def\svgwidth{0.4\columnwidth}
	\import{./figures/}{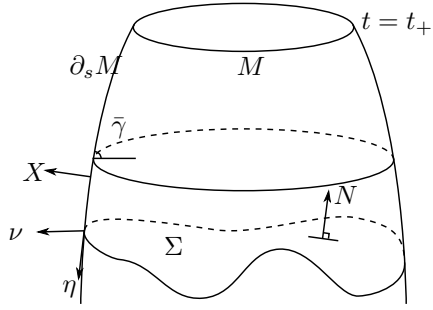}
	\endgroup

    \caption{Notations.}
    \label{fig:notations}
\end{figure}

\subsection{Functional and first variation}
We fix $\bar{h} = 2 \psi'/\psi$ and $\bar{\gamma}$ to be given by \eqref{angle}.
We define the functional
\begin{equation}
  I (E) =\mathcal{H}^2 (\partial^{\ast} E \cap
  \ensuremath{\operatorname{int}}M) - \int_E \bar{h} - \int_{\partial^{\ast} E
  \cap \partial M} \cos \bar{\gamma}, \label{action}
\end{equation}
where $\partial^{\ast} E$ denotes the reduced boundary of $E$ and the
variational problem
\begin{equation}
  \mathcal{I}= \inf \{I (E) : \text{ } E \in \mathcal{E}\}, \label{variational
  problem}
\end{equation}
where $\mathcal{E}$ is the collection of contractible open subsets $E'$ such
that $P_+ \subset E'$. Let $\Sigma$ be a surface with boundary $\partial
\Sigma$ such that $\partial \Sigma$ separates $P_{\pm}$. Then $\Sigma$
separates $M$ into two components and the component closer to $P_+$ is just
$E$. We reformulate the functional \eqref{action} in terms of $\Sigma$. We
define
\begin{equation}
  F (\Sigma) = I (E) = | \Sigma | - \int_E \bar{h} - \int_{\partial E \cap
  \partial M} \cos \bar{\gamma} . \label{F action}
\end{equation}
Let $\phi_t$ be a family of immersions $\phi_t : \Sigma \to M$ such that
$\phi_t (\partial \Sigma) \subset \partial M$ and $\phi_0 (\Sigma) = \Sigma$.
Let $\Sigma_t = \phi_t (\Sigma)$ and $E_t$ be the corresponding component
separated by $\Sigma_t$. Let $Y$ be the vector field $\tfrac{\partial
\phi_t}{\partial t}$. Define $\mathcal{A} (t) = F (\Sigma_t)$ and $f = \langle
Y, N \rangle$, then by the first variation
\begin{equation}
  \mathcal{A}' (0) = \int_{\Sigma} f (H - \bar{h}) + \int_{\partial \Sigma}
  \langle Y, \nu - \eta \cos \bar{\gamma} \rangle . \label{first variation}
\end{equation}
We know that if $\Sigma$ is regular, then it is of mean curvature $\bar{h}$
and meets $\partial M$ at a prescribed angle $\bar{\gamma}$. And $E$ is called a
\text{{\itshape{capillary}}} $\mu$-\text{{\itshape{bubble}}}. The second
variation at such $\Sigma$ is
\begin{equation}
  \mathcal{A}'' (0) = Q (f, f) : = - \int_{\Sigma} (f \Delta f + (|A|^2
  +\ensuremath{\operatorname{Ric}}(N) + \partial_N \bar{h}) f^2) +
  \int_{\partial \Sigma} f (\tfrac{\partial f}{\partial \nu} - q f) .
  \label{Q}
\end{equation}
where $f \in C^{\infty} (\Sigma)$ and
\begin{equation}
  q : = \tfrac{1}{\sin \bar{\gamma}} A_{\partial M} (\eta, \eta) - \cot
  \bar{\gamma} A (\nu, \nu) + \tfrac{1}{\sin^2 \bar{\gamma}} \partial_{\eta}
  \cos \bar{\gamma} . \label{q}
\end{equation}
We define two operators
\begin{equation}
  L = - \Delta - (|A|^2 +\ensuremath{\operatorname{Ric}}(N) + \partial_N
  \bar{h}) \text{ in } \Sigma, \label{L}
\end{equation}
and
\[ B = \tfrac{\partial}{\partial \nu} - q \text{ on } \partial \Sigma .
   \label{B} \]
The surface $\Sigma$ is called \text{{\itshape{stable}}} if
\begin{equation}
  Q (f, f) \geqslant 0 \label{stability}
\end{equation}
for all $f \in C^{\infty} (M)$. The second variation \eqref{Q} is closely
related to the variation of $H - \bar{h}$ and $\cos \gamma - \cos
\bar{\gamma}$. Indeed, let $f = \langle Y, N \rangle$, we have that the first
variation of $H - \bar{h}$ is
\begin{align}
\nabla_Y (H - \bar{h}) & = L f + \nabla_{Y^{\top}} (H - \bar{h}) \\
& = - \Delta f - (|A|^2 +\ensuremath{\operatorname{Ric}}(N) + \partial_N
\bar{h}) f + \nabla_{Y^{\top}} (H - \bar{h}) . \label{first variation of
mean curvature difference}
\end{align}
And the first variation of the angle difference $\langle X, N \rangle - \cos
\bar{\gamma}$ is
\begin{align}
\nabla_Y (\cos \gamma - \cos \bar{\gamma}) = & - \sin \bar{\gamma}
\tfrac{\partial f}{\partial \nu} \\
+ (A_{\partial M} (\eta, \eta) &  - \cos \bar{\gamma} A (\nu, \nu) +
\tfrac{1}{\sin \bar{\gamma}} \partial_{\eta} \cos \bar{\gamma}) f +
\nabla_{Y^{\top}} (\langle X, N \rangle - \cos \bar{\gamma}) . \label{first
variation of angle difference}
\end{align}
For $\Sigma$, Schoen-Yau {\cite{schoen-proof-1979}} rewrote the term $|A|^2
+\ensuremath{\operatorname{Ric}} (N)$ as
\begin{equation}
  |A|^2 +\ensuremath{\operatorname{Ric}} (N) = \tfrac{1}{2} (R_g - 2 K + |A|^2
  + H^2) \label{sy rewrite}
\end{equation}
where $K$ is the Gauss curvature of $\Sigma$. Along the boundary $\partial
\Sigma$, we have the rewrite (see {\cite[Lemma 3.1]{ros-stability-1997}} or
{\cite[(4.13)]{li-polyhedron-2020}})
\begin{equation}
  \tfrac{1}{\sin \bar{\gamma}} A_{\partial M} (\eta, \eta) - \cos \bar{\gamma}
  A (\nu, \nu) = - H \cot \bar{\gamma} + \tfrac{H_{\partial M}}{\sin
  \bar{\gamma}} - \kappa \label{rs rewrite}
\end{equation}
where $\kappa$ is the geodesic curvature of $\partial \Sigma$ in $\Sigma$.

\subsection{Analysis of stability}Starting from now on, we assume that
$\Sigma$ is a regular stable capillary $\mu$-bubble in $(M, g)$ which
satisfies the assumptions of item (\ref{item simple}) of Theorem \ref{rigidity depending on structures}.

\begin{lemma}
  \label{simple consequence}Let $\Sigma$ be a regular stable capillary
  $\mu$-bubble, then $\Sigma$ is a $t$-level set.
\end{lemma}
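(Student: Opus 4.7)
My plan is to insert $f \equiv 1$ into the stability inequality $Q(f,f)\ge 0$ of \eqref{Q}, rewrite the interior integrand via the Schoen--Yau identity \eqref{sy rewrite} and Gauss--Bonnet on $(\Sigma, g|_\Sigma)$, rewrite the boundary integrand via \eqref{rs rewrite}, and then close the argument by comparing with Gauss--Bonnet on the spherical factor $(S^2, g_{S^2})$. Using $H = \bar h$ at a regular $\mu$-bubble, one obtains
\[ \tfrac{1}{2}\int_\Sigma \bigl(R_g + |A|^2 + \bar h^2 + 2\partial_N \bar h\bigr) + \int_{\partial\Sigma} (\kappa + q) \;\le\; 2\pi\chi(\Sigma). \]

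For the interior comparison I would use three pointwise bounds. First, since $\Sigma$ is a $2$-surface in a $3$-manifold, $|A|^2 \ge H^2/2 = \bar h^2/2$ with equality iff $\Sigma$ is umbilic. Second, a direct computation in \eqref{spherical warped product} gives the key warped-product identity
\[ R_{\bar g} + \tfrac{3}{2}\bar h^2 + 2\bar h'(t) \;=\; \tfrac{2K}{\psi^2}. \]
Third, the metric comparison $g \ge \bar g$ forces $|\nabla t|_g \le 1$, hence $|\partial_N t|_g \le 1$, and together with $\bar h'(t) = 2(\log\psi)'' \le 0$ this yields $\partial_N \bar h \ge \bar h'(t)$. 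Combined with $R_g \ge R_{\bar g}$, the interior integrand is bounded below by $K/\psi^2$ plus the non-negative defects $|\mathring A|^2$ and $\partial_N \bar h - \bar h'$.

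For the boundary, \eqref{rs rewrite} (at $H = \bar h$) converts $\kappa + q$ into an expression involving $H_{\partial_s M}$, $\bar h \cot \bar\gamma$ and $\partial_\eta \cos \bar\gamma$. The hypothesis $H_{\partial_s M} \ge \bar H_{\partial_s M}$ bounds the first term; the convexity of $\partial_s M$ in the conformally related metric $(M, ds^2 + g_{S^2})$, which is equivalent to $d\bar\gamma/ds \le 0$, controls the $\partial_\eta \cos \bar\gamma$ contribution and translates into a geodesic-curvature bound for $\pi(\partial\Sigma) \subset S^2$.

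To close the argument I would project via $\pi : [t_-,t_+]\times S^2 \to S^2$: the metric comparison $g \ge \bar g$ bounds the Jacobian of $\pi|_\Sigma$ above by $\psi^{-2}$, so
\[ \int_\Sigma \tfrac{K}{\psi^2}\, d\sigma_g \;\ge\; \int_{\pi(\Sigma)} K\, d\sigma_{S^2}, \]
and Gauss--Bonnet on $\pi(\Sigma)\subset S^2$ matches the remaining topological and boundary terms exactly. The composite inequality is therefore forced to be an equality, and the equality cases yield $|\mathring A|=0$, $\partial_N t = 1$, and $g|_\Sigma = \bar g|_\Sigma$, hence $N = \partial_t$ and $\Sigma$ is a $t$-level set. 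The main obstacle will be the boundary bookkeeping: matching $\int_{\partial\Sigma}\kappa$ on $(\Sigma, g|_\Sigma)$, after the conformal change $\bar g = \psi^2(ds^2 + g_{S^2})$, with the geodesic curvature of $\pi(\partial\Sigma) \subset S^2$, and verifying that the boundary log-concavity $d\bar\gamma/ds \le 0$ is precisely the sign condition needed to close the projection step rigorously.
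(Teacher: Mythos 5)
Your interior estimates are exactly those of the paper: $f\equiv 1$ in \eqref{stability}, the Schoen--Yau rewrite \eqref{sy rewrite}, the traceless decomposition $|A|^2\geq H^2/2$, the warped-product identity $R_{\bar g}+\tfrac32\bar h^2+2\bar h'=2K/\psi^2$, and the pointwise bound $\partial_N\bar h\geq \bar h'$ from $g\geq\bar g$ together with $\bar h'\leq 0$. Up to the reduction of the boundary integrand via \eqref{rs rewrite} and the conformal change \eqref{conformal of partial s M}--\eqref{g bar mean curvature of Sigma t}, which you essentially reproduce, the proposal matches the first half of the proof verbatim.

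The gap is that you have labelled the actual crux of the argument as a ``main obstacle'' and left it unresolved. The paper does \emph{not} close the argument by projecting $\Sigma$ onto $S^2$ and comparing Gauss--Bonnet on $\pi(\Sigma)$ with $\int_{\partial\Sigma}\kappa$; that comparison is not straightforward because $\pi|_\Sigma$ is not conformal for a tilted surface, so the geodesic curvature of $\pi(\partial\Sigma)$ in $(S^2,g_{S^2})$ is not controlled in any direct way by the geodesic curvature of $\partial\Sigma$ in $(\Sigma,g)$, and the pointwise convexity of $\partial_s M$ enters through the full second fundamental form $\hat A_{\partial_s M}$, not just the scalar monotonicity $d\bar\gamma/ds\leq 0$ (which is only shown to be equivalent to convexity in the rotationally symmetric example). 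Instead, Lemma~\ref{lm:key in lower bound} handles the boundary by (i) invoking the Ko--Yao pointwise inequality \eqref{ko-yao}, which is exactly where the convexity of $\partial_s M$ in $\hat g=ds^2+g_{S^2}$ is used; (ii) recognizing $\hat A_{1\hat T}=-\hat W(e_2,\hat\eta)$ for the Newton tensor $\hat W=\hat A-\hat H\hat g$ and applying the divergence theorem on the lateral strip $S\subset\partial_s M$ between $\partial\Sigma$ and $\partial\Sigma(t_+)$ (terms $I_1,I_2,I_3$, with the nontrivial cancellation $I_2=0$); and (iii) applying the divergence-free Einstein tensor $\hat G$ on the bulk region $\Omega$ to relate $\int_\Sigma K$ to $\int_{\Sigma(t_+)}K$, so that Gauss--Bonnet is applied on the \emph{flat} slice $\Sigma(t_+)$ rather than on a projected image. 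Without (i)--(iii), your projection outline does not establish \eqref{crucial estimate}, which is the inequality the lemma ultimately rests on, and the equality analysis that identifies $\Sigma$ as a $t$-level set cannot get started. The proposal would need to supply this entire boundary machinery (or a genuinely new replacement) before it constitutes a proof.
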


\begin{proof}
  First, we note that the second variation $\mathcal{A}'' (0) \geqslant 0$ as
  in \eqref{Q}. First, using Schoen-Yau's rewrite \eqref{sy rewrite} we see
  that
\begin{align}
& |A|^2 +\ensuremath{\operatorname{Ric}} (N) + \partial_N \bar{h}
\\
= & \tfrac{1}{2} (R - 2 K + |A|^2 + H^2) + \partial_N \bar{h} \\
= & \tfrac{1}{2} (R - 2 K + |A^0 |^2 + \tfrac{H^2}{2} + H^2) + \partial_N
\bar{h} \\
= & \tfrac{1}{2} (R + \tfrac{3}{2} \bar{h}^2 + 2 \partial_N \bar{h}) - K +
\tfrac{1}{2} |A^0 |^2, \label{sy}
\end{align}
  where $A^0$ is the traceless part of the second fundamental form. Similarly
  using \eqref{rs rewrite}, we see
  \[ q = - H \cot \bar{\gamma} + \tfrac{H_{\partial M}}{\sin \bar{\gamma}} -
     \kappa + \tfrac{1}{\sin^2 \bar{\gamma}} \partial_{\eta} \cos \bar{\gamma}
     . \]
  We obtain by letting $f \equiv 1$ in the \eqref{stability} (also using
  \eqref{Q} and \eqref{q}),
\begin{align}
2 \pi \chi (\Sigma) & = \int_{\Sigma} K + \int_{\partial \Sigma} \kappa
\\
& \geqslant \int_{\Sigma} \left[ \tfrac{1}{2} (R + \tfrac{3}{2} \bar{h}^2
+ 2 \partial_N \bar{h}) + \tfrac{1}{2} |A^0 |^2 \right] + \int_{\partial
\Sigma} \left( \tfrac{H_{\partial M}}{\sin \bar{\gamma}} - \bar{h} \cot
\bar{\gamma} + \tfrac{1}{\sin^2 \bar{\gamma}} \partial_{\eta} \cos
\bar{\gamma} \right) \\
& \geqslant \int_{\Sigma} \tfrac{1}{2} \left( R + \tfrac{3}{2} \bar{h}^2
+ 2 \partial_N \bar{h} \right) + \int_{\partial \Sigma} \left(
\tfrac{H_{\partial M}}{\sin \bar{\gamma}} - \bar{h} \cot \bar{\gamma} +
\tfrac{1}{\sin^2 \bar{\gamma}} \partial_{\eta} \cos \bar{\gamma} \right)
\\
& \geqslant \int_{\Sigma} \tfrac{1}{2} \left( R_{\bar{g}} + \tfrac{3}{2}
\bar{h}^2 + 2 \partial_N \bar{h} \right) + \int_{\partial \Sigma} \left(
\tfrac{\bar{H}_{\partial M}}{\sin \bar{\gamma}} - \bar{h} \cot
\bar{\gamma} + \tfrac{1}{\sin^2 \bar{\gamma}} \partial_{\eta} \cos
\bar{\gamma} \right), \label{after rewrite}
\end{align}
  where in the last line we have incorporated the comparisons $R_g \geqslant
  R_{\bar{g}}$ in $M$ and $H_{\partial M} \geqslant \bar{H}_{\partial M}$ on
  $\partial M$.
  
  Now we estimate $R_{\bar{g}} + \tfrac{3}{2} \bar{h}^2 + 2 \partial_N
  \bar{h}$. We have that
  \[ \partial_N \bar{h} = \bar{g} (N, \nabla^{\bar{g}} \bar{h}) \geqslant -
     |N|_{\bar{g}} | \nabla^{\bar{g}} \bar{h} |_{\bar{g}} = |N|_{\bar{g}}
     \bar{h}', \]
  since $g \geqslant \bar{g}$, so
  \[ 1 = |N|_g \geqslant |N|_{\bar{g}}, \]
  and we get
  \[ \partial_N \bar{h} \geqslant \bar{h}' . \]
  So
  \[ R_{\bar{g}} + \tfrac{3}{2} \bar{h}^2 + 2 \partial_N \bar{h} \geqslant
     R_{\bar{g}} + \tfrac{3}{2} \bar{h}^2 + 2 \bar{h}' . \]
  For any point $x \in \Sigma$, the right hide side is just $\tfrac{2 K(p_x)}{\psi^2
    (t_x)}$. Recall that $x=(t_x,p_x)$ is the coordinate of $x \in \bar{\Sigma}_t$.
  This is by a direct calculation of the scalar curvature of the warped product metric
  \eqref{spherical warped product}.
  So
  \begin{equation}
    R_{\bar{g}} + \tfrac{3}{2} \bar{h}^2 + 2 \partial_N \bar{h} \geqslant
    \tfrac{2K(p_x)}{\psi^2 (t_x)} . \label{interior lower bound simplify}
  \end{equation}

Let $\hat{g} = \mathrm{d} s^2 + g_{{S}^2}$ and it is conformally
related to $\bar{g}$ via \eqref{conformally related metric}. Let $\hat{X}$ be
the unit outward normal of $\partial_s M$ in $M$ and $\hat{H}_{\partial_s M}$
be the mean curvature of $\partial_s M$ in $M$ with respect to $\hat{g}$.
Since $\bar{g}$ is conformal to $\hat{g}$, by a well known formula of
conformal change of mean curvature,
\begin{equation}
  \bar{H}_{\partial_s M} = \tfrac{1}{\varphi (s)} (\hat{H}_{\partial_s M} + 2
  \partial_{\hat{X}} \log \varphi) . \label{conformal of partial s M}
\end{equation}
Similarly, the mean curvature $\bar{h}$ of $\Sigma_t$ in $M$ is
\begin{equation}
  \bar{h} (t) = \tfrac{2}{\varphi (s)^2} \varphi' (s) . \label{g bar mean
  curvature of Sigma t}
\end{equation}
Hence, by \eqref{conformal of partial s M}, \eqref{g bar mean curvature of
Sigma t}, \eqref{conformally related metric} and that $\hat{g} (\partial_s,
\hat{X}) = \cos \bar{\gamma}$,
\[ \tfrac{\bar{H}_{\partial M}}{\sin \bar{\gamma}} - \bar{h} \cot \bar{\gamma}
   + \tfrac{1}{\sin^2 \bar{\gamma}} \partial_{\eta} \cos \bar{\gamma} =
   \tfrac{1}{\psi (t_x) \sin \bar{\gamma}} (\hat{H}_{\partial_s M} -
   \partial_{\psi \eta} \bar{\gamma}) . \]
Inserting \eqref{interior lower bound simplify} and the above in \eqref{after
rewrite} yields
\[ 2 \pi \chi (\Sigma) \geqslant \int_{\Sigma} \tfrac{K(p_x)}{\psi (t_x)^2}
   \mathrm{d} \sigma + \int_{\partial \Sigma} \tfrac{1}{\psi (t_x) \sin
   \bar{\gamma}} (\hat{H}_{\partial_s M} - \partial_{\psi \eta} \bar{\gamma})
   \mathrm{d} \lambda, \]
where we have written the area element $\mathrm{d} \sigma$ and line length
element $\mathrm{d} \lambda$ explicitly in the metric $g$.
The rest of the proof is deferred to the next Lemma \ref{lm:key in lower bound}.
\end{proof}

\begin{lemma}
  \label{lm:key in lower bound}Assume that $g \geqslant \bar{g}$. If $\Sigma$
  is a surface in $M$ whose boundary $\partial \Sigma$ is a simple smooth
  curve that separates $\partial (P_+ \cap \partial M)$ and $\partial (P_-
  \cap \partial M)$, then
  \begin{equation}
    \int_{\Sigma} \tfrac{K(p_x)}{\psi (t_x)^2} \mathrm{d} \sigma + \int_{\partial
    \Sigma} \tfrac{1}{\psi (t_x) \sin \bar{\gamma}} (\hat{H}_{\partial_s M} -
    \partial_{\psi \eta} \bar{\gamma}) \mathrm{d} \lambda \geqslant 2 \pi,
    \label{crucial estimate}
  \end{equation}
  where equality occurs if and only if $\Sigma$ is a $t$-level set.
\end{lemma}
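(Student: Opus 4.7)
The plan is to reduce the inequality \eqref{crucial estimate} to the Gauss--Bonnet theorem on a disk in $(S^2, g_{S^2})$ by exploiting the conformal relation $\bar{g} = \psi^2 \hat{g}$ of \eqref{conformally related metric} together with the product structure of $\hat{g} = \mathrm{d} s^2 + g_{S^2}$. The first step will use $g \geqslant \bar{g}$ to replace $\mathrm{d} \sigma$ and $\mathrm{d} \lambda$ by their $\bar{g}$-counterparts. The interior integrand $K(p_x)/\psi^2(t_x)$ is non-negative since $K > 0$; for the boundary integrand I will decompose the $\hat{g}$-unit outward normal of $\partial_s M$ as $\hat{X} = \cos \bar{\gamma}\, \partial_s + \sin \bar{\gamma}\, \hat{E}$, where $\hat{E}$ is the $g_{S^2}$-unit outward normal in $S^2$ of the slice curve $\gamma_s := \partial_s M \cap (\{s\} \times S^2)$, and compute the second fundamental form of $\partial_s M$ in the flat product metric. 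For the $\hat{g}$-orthonormal diagonal directions $\tau_1 \in T \gamma_s$ and $\tau_2 = \psi \eta$ of $T \partial_s M$, I expect
\[
\hat{A}_{\partial_s M}(\tau_1, \tau_1) = \sin \bar{\gamma}\, \kappa_{\gamma_s}^{S^2}, \qquad \hat{A}_{\partial_s M}(\tau_2, \tau_2) = \partial_{\psi \eta} \bar{\gamma},
\]
leading to the clean identity $(\hat{H}_{\partial_s M} - \partial_{\psi \eta} \bar{\gamma})/\sin \bar{\gamma} = \kappa_{\gamma_s}^{S^2}$. Convexity of $\partial_s M$ in $\hat{g}$ then yields $\kappa_{\gamma_s}^{S^2} \geqslant 0$ and non-negativity of the boundary integrand.

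Next, the conformal change $\mathrm{d}\sigma_{\bar{g}} = \psi^2\, \mathrm{d}\sigma_{\hat{g}}$ and $\mathrm{d}\lambda_{\bar{g}} = \psi\, \mathrm{d}\lambda_{\hat{g}}$ cancels all $\psi$-factors and reduces the claim to
\[
\int_\Sigma K(p_x)\, \mathrm{d}\sigma_{\hat{g}} + \int_{\partial \Sigma} \kappa^{S^2}_{\gamma_{s(q)}}(\pi(q))\, \mathrm{d}\lambda_{\hat{g}}(q) \geqslant 2 \pi,
\]
where $\pi : M \to S^2$ denotes the projection, which is $1$-Lipschitz in $\hat{g}$. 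Letting $D \subset S^2$ denote the topological disk with $\partial D = \pi(\partial \Sigma)$, the area formula together with $K \geqslant 0$ and $|\operatorname{Jac}(\pi|_\Sigma)| \leqslant 1$ gives $\int_\Sigma K(p_x)\, \mathrm{d}\sigma_{\hat{g}} \geqslant \int_D K\, \mathrm{d}\sigma_{g_{S^2}}$. Combined with the analogous boundary estimate $\int_{\partial \Sigma} \kappa^{S^2}_{\gamma_s}\, \mathrm{d}\lambda_{\hat{g}} \geqslant \int_{\partial D} \kappa^{S^2}_{\partial D}\, \mathrm{d}\lambda_{g_{S^2}}$ and Gauss--Bonnet on $(D, g_{S^2})$, namely $\int_D K + \int_{\partial D} \kappa = 2\pi$, the claim follows.

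The main obstacle is the boundary comparison. At a point $q \in \partial \Sigma$, the two curves $\gamma_{s(q)}$ and $\partial D$ pass through $\pi(q) \in S^2$ with generally different tangent directions, so their geodesic curvatures are not obviously comparable pointwise. I plan to parametrize $\partial \Sigma$ piecewise by its $s$-coordinate away from the critical points of $s|_{\partial \Sigma}$, rewrite $\kappa^{S^2}_{\partial D}$ in a moving frame adapted to $\gamma_s$, and absorb the resulting \emph{slant} error term using the companion convexity inequality $\hat{A}_{\partial_s M}(\tau_2, \tau_2) = \partial_{\psi \eta} \bar{\gamma} \geqslant 0$.

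For the equality case, equality in the first step forces $g = \bar{g}$ along $T \Sigma$ and $T \partial \Sigma$; equality in the area-formula step forces $|\operatorname{Jac}(\pi|_\Sigma)| \equiv 1$, which means $T_x \Sigma$ is horizontal at every point, so $\Sigma$ is a $t$-level set. The boundary inequality and Gauss--Bonnet on $D = \pi(\Sigma)$ then become equalities automatically, closing the argument.
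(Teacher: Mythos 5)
There is a genuine gap, and it occurs exactly where you flag the ``main obstacle.'' In fact the difficulty starts one step earlier, at the ``clean identity''
\[
\frac{\hat{H}_{\partial_s M} - \partial_{\psi\eta}\bar{\gamma}}{\sin\bar{\gamma}} \;=\; \kappa^{S^2}_{\gamma_s},
\]
which is not an identity. In the Lemma, $\eta$ is the normal of $\partial\Sigma$ in $\partial_s M$, not the normal of the slice curve $\gamma_s = \partial_s M \cap \Sigma_t$. These are different curves in $\partial_s M$, so $\psi\eta$ is not an orthonormal completion of $T\gamma_s$ in $T\partial_s M$ (in any metric). The identity $(\hat{H}_{\partial_s M} - \partial_{e_2}\bar{\gamma})/\sin\bar{\gamma} = \hat\kappa$ does hold with $e_2$ the $\hat{g}$-unit normal of $\gamma_s$ (this is \eqref{angles and 2ff} in the paper), but $e_2 \neq \hat\eta$ unless $\partial\Sigma$ is a $t$-level curve. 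The discrepancy between the two is precisely what Ko--Yao's inequality (the key inequality \eqref{ko-yao}, relying on convexity of $\partial_s M$ in $\hat{g}$) controls --- it is an inequality, not an equality, and it is the first substantive step of the paper's proof, not a harmless cosmetic rewrite. So your argument stalls at the very beginning.

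Beyond that, even granting an inequality $(\hat{H}_{\partial_s M} - \partial_{\psi\eta}\bar{\gamma})/\sin\bar{\gamma} \geqslant \kappa^{S^2}_{\gamma_s}$, the proposed boundary estimate $\int_{\partial\Sigma}\kappa^{S^2}_{\gamma_s}\,\mathrm{d}\lambda_{\hat{g}}\geqslant\int_{\partial D}\kappa^{S^2}_{\partial D}$ is not established; in general $\pi(\partial\Sigma)$ need not be a simple curve, and comparing the geodesic curvature of the slice $\gamma_s$ at $\pi(q)$ with that of $\pi(\partial\Sigma)$ at $\pi(q)$ (two different curves through the same point, differing tangents, opposite scaling of the length element) has no obvious monotonicity. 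The paper avoids the projection entirely. After invoking Ko--Yao it writes the boundary integrand via the Newton tensor $\hat{W}=\hat{A}-\hat{H}\hat{g}$, applies the divergence theorem over a region $S\subset\partial_s M$ (using Codazzi) to convert $\int_{\partial\Sigma}$ into $\int_{\partial\Sigma(t_+)}$ plus a term $\int_S K\langle\partial_t,\hat{X}\rangle$, and separately applies the divergence theorem for the Einstein tensor $\hat{G}$ over the solid region $\Omega$ to control $\int_\Sigma K(p_x)$ by $\int_{\Sigma(t_+)}K$ plus an exactly cancelling $-\int_S K\langle\partial_t,\hat{X}\rangle$. The two pieces add up to a Gauss--Bonnet identity on the fixed $t_+$-level set, which gives $2\pi$ directly, and the cross term $\int_S K\langle\partial_t,\hat X\rangle$ is essential to this cancellation --- it has no counterpart in your decoupled ``project-then-Gauss--Bonnet'' scheme. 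If you want to salvage a projection argument, you would need a substitute for both the Ko--Yao step and the $S$-region bookkeeping; as written, neither is supplied.
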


\begin{proof}
It suffices to prove \eqref{crucial estimate} for
$\psi \equiv 1$ since $g \geqslant \bar{g} = \psi^2 \hat{g}$. In this case, $s=t$, we just use $t$. We also suppress
the subscript $\partial_s M$ for clarity in this proof. In addition, we assume that
every $t$-coordinate of $\Sigma$ is strictly less than $t_+$, since we can increase
$t_+$ a little.
Let $e_1$ be the unit
tangent vector of $\partial \Sigma_t$ with respect to $\hat{g}$ and $e_2$ be
the unit outward normal of $\partial \Sigma_t $ in $\partial_s M$ with
respect to $\hat{g}$. Let $T$ (resp. $\hat{T}$) be the unit tangent vector of
$\partial \Sigma$ with respect to $g$ (resp. $\hat{g}$). We recall an ingenious
inequality of {\cite[Lemma 3.2]{ko-scalar-2024}},
\begin{equation}
  \hat{H} - \nabla_{\eta} \bar{\gamma} \geqslant \langle T, e_2 \nabla_1
  \bar{\gamma} + (\hat{H} - \nabla_2 \bar{\gamma}) e_1 \rangle, \label{ko-yao}
\end{equation}
where $\langle \cdot, \cdot \rangle$ denotes the inner product with respect to
$\hat{g}$.

Let $\lambda$ be an arc-length parameter of $\partial \Sigma$ with respect to
$g$, then the length element of $\partial \Sigma$ with respect to $\hat{g}$ is
given by $|T|_{\hat{g}} \mathrm{d} \lambda =: \mathrm{d} \hat{\lambda}$ and $T
= |T|_{\hat{g}} \hat{T}$. Therefore,
\begin{align}
  & \int_{\partial \Sigma} \tfrac{1}{\sin \bar{\gamma}} (\hat{H} -
  \partial_{\eta} \bar{\gamma}) \mathrm{d} \lambda \\
  \geqslant & \int_{\partial \Sigma} \tfrac{1}{\sin \bar{\gamma}} \langle T,
  \partial_1 \bar{\gamma} e_2 + (\hat{H} - \partial_2 \bar{\gamma}) e_1
  \rangle \mathrm{d} \lambda \\
  = & \int_{\partial \Sigma} \tfrac{1}{\sin \bar{\gamma}} \langle \hat{T},
  \nabla_1 \bar{\gamma} e_2 + (\hat{H} - \nabla_2 \bar{\gamma}) e_1 \rangle
  \mathrm{d} \hat{\lambda} . 
\end{align}
Let $\hat{\eta}$ be the unit outward normal of $\partial \Sigma$ in
$\partial_s M$ with respect to $\hat{g}$. Recall the notation $\hat{A} =
\hat{A}_{\partial_s M}$, and we know that the components of $\hat{A}$ satisfy
\begin{equation}
  \partial_1 \bar{\gamma} = \hat{A}_{12}, \text{ } \partial_2 \bar{\gamma} =
  \hat{A}_{2 2}, \text{ } \hat{H} - \partial_2 \bar{\gamma} = \hat{A}_{1 1},
  \label{angles and 2ff}
\end{equation}
in the frame $\{e_1, e_2 \}$. So
\[ \langle \hat{T}, \partial_1 \bar{\gamma} e_2 + (\hat{H} - \partial_2
   \bar{\gamma}) e_1 \rangle = \langle \hat{T}, \hat{A}_{1 2} e_2 + \hat{A}_{1
   1} e_1 \rangle = \hat{A}_{1 \hat{T}} . \]
Since the orthonormal frames $\{ \hat{T}, \hat{\eta} \}$ and $\{e_1, e_2 \}$
have the same orientation, we can set $\hat{T} = a_1 e_1 + a_2 e_2$ and so
$\hat{\eta} = - a_2 e_1 + a_1 e_2$ for some $a_1$ and $a_2$ which satisfy
$a_1^2 + a_2^2 = 1$. It then follows that
\begin{equation}
  \hat{A}_{1 \hat{T}} = - (\hat{A} - \hat{H}  \hat{g}) (e_2, \hat{\eta}) = : -
  \hat{W} (e_2, \hat{\eta}), \label{newton tensor}
\end{equation}
by considering also that $\hat{H} = \hat{A}_{11} + \hat{A}_{22}$. Hence
\[ \int_{\partial \Sigma} \tfrac{1}{\sin \bar{\gamma}} (\hat{H} -
   \partial_{\eta} \bar{\gamma}) \mathrm{d} \lambda \geqslant - \int_{\partial
   \Sigma} \tfrac{1}{\sin \bar{\gamma}} \hat{W} (e_2, \hat{\eta}) \mathrm{d}
   \hat{\lambda} . \]
Suppose that $\partial \Sigma(t_+)$ (we set $\Sigma(t_{+\pm})=\Sigma_{t_{\pm}}$ to avoid double subscripts) and $\partial \Sigma$ enclose a region
$S$ in $\partial_s M$. By the divergence theorem,
\begin{align}
  & - \int_{\partial \Sigma} \tfrac{1}{\sin \bar{\gamma}} \hat{W} (e_2,
  \hat{\eta}) \mathrm{d} \hat{\lambda} \\
  = & - \int_S \hat{\nabla}^S_i \left( \hat{W}_{i j} \langle \tfrac{1}{\sin
  \bar{\gamma}} e_2, e_j \rangle \right) \mathrm{d} \hat{\sigma} -
  \int_{\partial \Sigma (t_+)} \tfrac{1}{\sin \bar{\gamma}} (\hat{A} - \hat{H}
  \hat{g}) (e_2, e_2) \mathrm{d} \hat{\lambda} \\
  = : & - \int_S \hat{\nabla}^S_i (\hat{A}_{i j} - \hat{H} \hat{g}_{i j})
  \langle \tfrac{1}{\sin \bar{\gamma}} e_2, e_j \rangle \mathrm{d}
  \hat{\sigma} - \int_S \hat{W}_{i j}  \langle \hat{\nabla}_i^S(\tfrac{1}{\sin
  \bar{\gamma}} e_2), e_j \rangle \mathrm{d} \hat{\sigma} - I_3 \\
  =: & - I_1 - I_2 - I_3, 
\end{align}
where $\hat{\nabla}^S$ is the induced connection on $S$ with respect to the
metric $\hat{g}$. For $I_1$, we use Gauss-Codazzi equation,
\[ I_1 =
  - \int_S \ensuremath{\operatorname{Ric}}_{\hat{g}} (\hat{X},
  \tfrac{1}{\sin \bar{\gamma}} e_2) \mathrm{d} \hat{\sigma}
  = - \int_S K(p_x)\cos \bar{\gamma} \mathrm{d} \hat{\sigma}
  = - \int_S K(p_x) \langle \partial_t, \hat{X} \rangle \mathrm{d} \hat{\sigma} ,
\]
where the fact that $\hat{g} = \mathrm{d}t^{2} +g_{S^{2}} $ was also used in
determining the Ricci curvature.
For $I_3$, we use \eqref{newton tensor}, we see
\[ I_3 = \int_{\partial \Sigma (t_+)} \tfrac{1}{\sin \bar{\gamma}} \hat{A}_{1
   1} \mathrm{d} \hat{\lambda} = \int_{\partial \Sigma (t_+)} \hat{\kappa}(p_x,t)
   \mathrm{d} \hat{\lambda}, \]
where $\hat{\kappa}(p,t)$ is the geodesic curvature of $\partial \Sigma_t $ in
$\Sigma_t$ at $(p,t)\in \partial \Sigma_t$.
It seems tricky to calculate $\hat{\nabla}_i \left( \tfrac{1}{\sin
\bar{\gamma}} e_2 \right)$ in $I_2$ directly at a first sight, but we can convert to terms that
are easier. By the identity,
\[ \tfrac{1}{\sin \bar{\gamma}} e_2 = \partial_t - \tfrac{\cos
   \bar{\gamma}}{\sin \bar{\gamma}} \hat{\nu} = \partial_t - \tfrac{\cos
   \bar{\gamma}}{\sin^2 \bar{\gamma}} \hat{X} + \tfrac{\cos^2
   \bar{\gamma}}{\sin^2 \bar{\gamma}} \partial_t  \]
at $x\in \partial_sM$, we see
\begin{align}
  &  \hat{W}_{i j} \langle \hat{\nabla}_i (\tfrac{1}{\sin \bar{\gamma}}
  e_2), e_j \rangle \\
  =&  \hat{W}_{i j} \left\langle \hat{\nabla}_i \left( \partial_t -
  \tfrac{\cos \bar{\gamma}}{\sin^2 \bar{\gamma}} \hat{X} + \tfrac{\cos^2
  \bar{\gamma}}{\sin^2 \bar{\gamma}} \partial_t \right), e_j \right\rangle
  \\
  =&  - \tfrac{\cos \bar{\gamma}}{\sin^2 \bar{\gamma}} \hat{W}_{i j}
  \hat{A}_{i j} + \hat{W}_{i j} \partial_i \left( \tfrac{\cos^2
  \bar{\gamma}}{\sin^2 \bar{\gamma}} \right) \langle \partial_t, e_j \rangle
  \\
  =&  - \tfrac{\cos \bar{\gamma}}{\sin^2 \bar{\gamma}} \hat{W}_{i j}
  \hat{A}_{i j} - 2 \hat{W}_{i 2} \tfrac{\cos \bar{\gamma}}{\sin^3
  \bar{\gamma}} \partial_i \bar{\gamma} \langle \partial_t, e_2 \rangle
\end{align}
at $x$. It is now a tedious task to check from the definition of $\hat{W}$,
$\langle \partial_t, e_2 \rangle = - \sin \bar{\gamma}$ and \eqref{angles and
2ff} that the above vanishes. Therefore, $I_2 = 0$ and to sum up, we have
shown that
\begin{equation}
  \int_{\partial \Sigma} \tfrac{1}{\sin \bar{\gamma}} (\hat{H} -
  \partial_{\eta} \bar{\gamma}) \mathrm{d} \lambda \geqslant - \int_S K(p_x) \langle
  \partial_t, \hat{X} \rangle \mathrm{d} \hat{\sigma} + \int_{\partial \Sigma
  (t_+)} \hat{\kappa}(p_x,t) \mathrm{d} \hat{\lambda} . \label{bdry term estimate}
\end{equation}
Now we set the region enclosed by $\Sigma (t_+)$, $\Sigma$ and $\partial_s M$
to be $\Omega$. Let $ \hat G = \operatorname{Ric}_{\hat g} - \frac{1}{2} R_{\hat g} \hat g $. Using the divergence free property of $\hat G$ (twice-contracted Gauss-Codazzi equation), $\partial_t$, and the divergence theorem, 
\[ 0 = \int_{\Omega} \hat{\nabla}_i (\hat{G}_{i j} (\partial_t)_j) =
  \int_{\partial \Omega} \hat{G}( \partial_t, \hat{X}) = - \frac{1}{2} \int_{\partial \Omega} R_{\hat g} \langle \partial_t, \hat X \rangle ,
= - \int_{\partial \Omega} K(p_x) \langle \partial_t, \hat X \rangle, \]
where $\hat{X}$ now also denotes the unit outward normal of $\partial \Omega$
with respect to $\hat{g}$ and $(\partial_t)_j$ denotes the $j$-th component of the vector field $\partial_t$. Note that $\partial \Omega = S \cup \Sigma (t_+)
\cup \Sigma$ and so
\[ 0 = \int_{\Sigma (t_+)} K(p_x) \langle \partial_t, \partial_t \rangle -
   \int_{\Sigma} K(p_x) \langle \partial_t, \hat{N} \rangle + \int_S K(p_x) \langle
   \partial_t, \hat{X} \rangle, \]
and it follows from that $K(p_x)>0$ that
\begin{equation}
   \int_{\Sigma }K(p_x) \geqslant  \int_{\Sigma} K(p_x) \langle \partial_t, \hat{N}
  \rangle = \int_{ \Sigma (t_+) } K(p_x) + \int_S K(p_x) \langle \partial_t, \hat{X}
  \rangle . \label{interior estimate}
\end{equation}
Finally, it follows from \eqref{bdry term estimate} and \eqref{interior
estimate} that
\[ \int_{ \Sigma } K(p_x) + \int_{\partial \Sigma} \tfrac{1}{\sin \bar{\gamma}}
  (\hat{H} - \partial_{\eta} \bar{\gamma}) \mathrm{d} \lambda \geqslant
  \int_{ \Sigma (t_+) } K(p_x) + \int_{\partial \Sigma (t_+)} \hat{\kappa}(p_x,t)
   \mathrm{d} \hat{\lambda} . \]
An application of the Gauss-Bonnet theorem 
on the right hand finishes the proof of \eqref{crucial
  estimate}. The equality case is easy to trace.
\end{proof}

\begin{remark}
    The convexity of $\partial_s M$ in  $(M,\mathrm{d}s^2 + g_{S_2})$ with $\mathrm{d}s^2 + g_{S_2}$ given in \eqref{conformally related metric} is used in the inequality \eqref{ko-yao}.
\end{remark}

\subsection{Infinitesimally rigid surface}

The surface $\Sigma$ is a stable capillary $\mu$-bubble has more consequences
than the mere Lemma \ref{simple consequence}. We can conclude that $\Sigma$ is
a so-called infinitesimally rigid surface. See Definition \ref{infinitesimal
rigid surface}.

All inequalities are in fact equalities by Lemma \ref{lm:key in lower bound}
and tracing the equalities in \eqref{after rewrite}, we arrive that
\begin{equation}
  R_g = R_{\bar{g}}, N = \bar{N}, |A^0 | = 0 \text{ in } \Sigma
  \label{infinitesimal interior}
\end{equation}
and
\begin{equation}
  H_{\partial M} = \bar{H}_{\partial M} \text{ along }
  \partial \Sigma . \label{infinitesimal boundary}
\end{equation}
It then follows from the equality case of Lemma \ref{lm:key in lower bound} that
\begin{equation}
 t_x = t_0 \text{ at all } x \in \bar{\Sigma} \label{eq:metric
  and level}
\end{equation}
for some constant $t_0 \in [t_-,t_+]$. Because $\Sigma$ is stable
(equivalently $Q (f, f) \geqslant 0$), so the eigenvalue problem
\begin{equation}
  \left\{\begin{array}{ll}
    L f & = \mu f \text{ in } \Sigma\\
    B f & = 0 \text{ on } \partial \Sigma
  \end{array}\right. \label{eigen problem}
\end{equation}
has a non-negative first eigenvalue $\mu_1 \geqslant 0$.

The analysis now is similar to {\cite{fischer-colbrie-structure-1980}}.
Letting $f \equiv 1$ in \eqref{stability}, using \eqref{infinitesimal
interior}, \eqref{infinitesimal boundary} and \eqref{eq:metric and level}, we
get
\begin{align}
Q (1, 1) = & \int_{\Sigma} \left[ K - \tfrac{1}{2} (R + \tfrac{3}{2}
\bar{h}^2 + 2 \partial_N \bar{h}) \right] \\
& \quad + \int_{\partial \Sigma} \left[ \kappa - (\tfrac{H_{\partial
M}}{\sin \bar{\gamma}} - \bar{h} \cot \bar{\gamma} - \tfrac{1}{\sin
\bar{\gamma}} \tfrac{\partial \bar{\gamma}}{\partial \eta}) \right] = 0.
\label{constant in bilinear form}
\end{align}
And so the first eigenvalue $\mu_1$ is zero, and the constant 1 is its
corresponding eigenfunction.

By \eqref{infinitesimal interior} and \eqref{sy}, the stability operator $L$
reduces to
\[ L = - \Delta - \left( \tfrac{K(p_x)}{\psi (t_0)^2} - K \right) ; \]
by considering \eqref{infinitesimal boundary} and that $t_x =t_0$
, the boundary stability operator $B$ reduces to
\[ B = \partial_{\nu} - \left( \frac{\hat{\kappa}(p_x,t_0)}{\psi
   (t_0)} - \kappa \right) . \]
Putting $f = 1$ and $\mu_1 = 0$ in the eigenvalue problem \eqref{eigen
problem}, we get
\begin{equation}
  K = \tfrac{K(p_x)}{\psi^2 (t_0)} \text{ in } \Sigma, \text{ } \kappa =
  \tfrac{\hat{\kappa}(p_x,t_0)}{ \psi (t_0)} \text{ on } \partial
  \Sigma . \label{vanishing gauss and constant geodesic}
\end{equation}
Now we
summarize the properties of $\Sigma$ in the definition of an
\text{{\itshape{infinitesimally}}} \text{{\itshape{rigid}}}
\text{{\itshape{surface}}}.

\begin{definition}
  \label{infinitesimal rigid surface}We say that $\Sigma$ is
  \text{{\itshape{infinitesimally rigid}}} if it satisfies
  \eqref{infinitesimal interior}, \eqref{infinitesimal boundary},
  \eqref{eq:metric and level} and \eqref{vanishing gauss and constant
  geodesic}.
\end{definition}

\subsection{Capillary foliation of constant $H - \bar{h}$}\label{construction
of CMC foliation}

See for instance the works {\cite{ye-foliation-1991}},
{\cite{bray-rigidity-2010}} and {\cite{ambrozio-rigidity-2015}} on
constructing CMC foliations. First, we construct a foliation with prescribed
angles $\bar{\gamma}$ whose leaf is of constant $H - \bar{h}$. Let $\phi (x,
t)$ be a local flow of a vector field $Y$ which is tangent to $\partial M$ and
transverse to $\Sigma$ and that $\langle Y, N \rangle = 1$.

In the following theorem, we only require that the scalar curvature of $(M,
g)$ and the mean curvature of $\partial M$ are bounded below.

\begin{theorem}
  \label{foliation near infinitesimal}Suppose $(M, g)$ is a three manifold
  with boundary, if $\Sigma$ is an infinitesimally rigid surface, then there
  exists $\varepsilon > 0$ and a function $w (x, t)$ on $\Sigma \times (-
  \varepsilon, \varepsilon)$ such that for each $t \in (- \varepsilon,
  \varepsilon)$, the surface
  \begin{equation}
    \Sigma_t = \{\phi (x, w (x, t)) : x \in \Sigma\}
  \end{equation}
  is a surface of constant $H - \bar{h}$ intersecting $\partial M$ with
  prescribed angle $\bar{\gamma}$. Moreover, for every $x \in \Sigma$ and
  every $t \in (- \varepsilon, \varepsilon)$,
  \begin{equation}
    w (x, 0) = 0, \text{ } \int_{\Sigma} (w (x, t) - t) = 0 \text{ and }
    \tfrac{\partial}{\partial t} w (x, t) |_{t = 0} = 1.
  \end{equation}
\end{theorem}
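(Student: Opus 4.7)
The plan is to apply the implicit function theorem to a map between Hölder spaces, viewing the construction as a CMC-type foliation with the capillary condition playing the role of an oblique-derivative boundary condition. For small $u \in C^{2,\alpha}(\Sigma)$, the graph $\Sigma_u := \{\phi(x, u(x)) : x \in \Sigma\}$ is a surface with boundary on $\partial M$ (since $Y$ is tangent to $\partial M$); write $H_u, \gamma_u$ for its mean curvature and contact angle, and $\bar h_u, \bar\gamma_u$ for the pullbacks of $\bar h, \bar\gamma$ to $\Sigma_u$. I define
\[
\Phi : C^{2,\alpha}(\Sigma) \times (-\delta, \delta) \to C^{0,\alpha}_{*}(\Sigma) \times C^{1,\alpha}(\partial\Sigma) \times \mathbb{R}
\]
by
\[
\Phi(u, t) = \Bigl((H_u - \bar h_u) - \tfrac{1}{|\Sigma|}\textstyle\int_\Sigma (H_u - \bar h_u)\,\mathrm{d}\sigma,\ \cos\gamma_u - \cos\bar\gamma_u,\ \tfrac{1}{|\Sigma|}\textstyle\int_\Sigma u\,\mathrm{d}\sigma - t\Bigr),
\]
where the subscript $\ast$ denotes functions of mean zero. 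By \eqref{eq:metric and level}, $\Sigma$ is the $t_0$-level set, so $H - \bar h \equiv 0$ on $\Sigma$ and $\gamma \equiv \bar\gamma$ on $\partial\Sigma$; hence $\Phi(0, 0) = 0$.

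Using the first variation formulas \eqref{first variation of mean curvature difference} and \eqref{first variation of angle difference}, the partial derivative at $(0, 0)$ is
\[
D_u\Phi(0,0)[v] = \Bigl(Lv - \tfrac{1}{|\Sigma|}\textstyle\int_\Sigma Lv,\ -\sin\bar\gamma\,(\partial_\nu v - qv),\ \tfrac{1}{|\Sigma|}\textstyle\int_\Sigma v\Bigr).
\]
The infinitesimal-rigidity analysis following \eqref{eigen problem} shows $\mu_1 = 0$ with eigenfunction $1$, so the pair $(L, B)$ has one-dimensional kernel spanned by constants and, by self-adjointness, one-dimensional cokernel also spanned by constants; in fact $L$ reduces to $-\Delta$ and $B$ to $\partial_\nu$. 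The mean-subtraction in the first slot removes the cokernel and the integral condition in the third slot pins down the kernel, so $D_u\Phi(0,0)$ is a linear isomorphism: given $(f, g, c)$ with $\int_\Sigma f = 0$, one first chooses a Lagrange multiplier $\lambda$ via the Fredholm compatibility condition and solves the mixed BVP $Lv = f + \lambda$, $(\partial_\nu - q)v = -g/\sin\bar\gamma$, then fixes the remaining additive constant by $\tfrac{1}{|\Sigma|}\int_\Sigma v = c$. Standard Schauder theory for oblique-derivative elliptic problems supplies $v \in C^{2,\alpha}$.

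The implicit function theorem now gives a $C^1$ family $t \mapsto w(\cdot, t)$ on some $(-\varepsilon, \varepsilon)$ with $w(\cdot, 0) = 0$ and $\Phi(w(\cdot, t), t) \equiv 0$. Unpacking the three components yields the three claims: each $\Sigma_t := \Sigma_{w(\cdot, t)}$ has constant $H - \bar h$, meets $\partial M$ at the prescribed angle $\bar\gamma$, and satisfies $\int_\Sigma (w(x, t) - t)\,\mathrm{d}\sigma = 0$. Differentiating $\Phi(w(\cdot, t), t) = 0$ at $t = 0$ gives $D_u\Phi(0,0)[\partial_t w|_{t=0}] = -\partial_t \Phi(0, 0) = (0, 0, 1)$; since the joint kernel of the first two components of $D_u\Phi(0,0)$ is exactly the constants, this forces $\partial_t w|_{t=0} \equiv 1$.

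The main obstacle is precisely the degeneracy produced by infinitesimal rigidity: the natural linearization $(L, B)$ has a nontrivial kernel, so a direct IFT on the bare map $u \mapsto (H_u - \bar h_u,\ \cos\gamma_u - \cos\bar\gamma_u)$ fails. The mean-subtraction and integral normalization above are a concrete Lyapunov–Schmidt reduction that removes both the kernel in the source and the cokernel in the target simultaneously; all remaining ingredients (smooth parameter dependence, Schauder estimates for the oblique problem) are standard.
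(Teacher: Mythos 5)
Your proof is correct and follows essentially the same strategy as the paper: linearize the mean-curvature/contact-angle map, observe that infinitesimal rigidity reduces $(L,B)$ to the Neumann pair $(-\Delta,\partial_\nu)$, and quotient out the resulting one-dimensional kernel and cokernel before applying the implicit function theorem. The only cosmetic difference is that you carry the normalization as an extra $\mathbb{R}$-valued slot in the target, whereas the paper builds the mean-zero constraint into the domain space $\mathcal{Y}$ and feeds the parameter $t$ through the graph height $t+u$; the two reductions are equivalent.
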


\begin{proof}
  Given a function in the H{\"o}lder space $C^{2, \alpha} (\Sigma) \cap C^{1,
  \alpha} (\bar{\Sigma})$ ($0 < \alpha < 1$), we consider
  \[ \Sigma_u = \{\phi (x, u (x)) : x \in \Sigma\}, \]
  which is a properly embedded surface if the norm of $u$ is small enough. We
  use the subscript $u$ to denote the quantities associated with $\Sigma_u$.
  
  Consider the space
  \[ \mathcal{Y}= \left\{ u \in C^{2, \alpha} (\Sigma) \cap C^{1, \alpha}
     (\bar{\Sigma}) : \int_{\Sigma} u = 0 \right\} \]
  and
  \[ \mathcal{Z}= \left\{ u \in C^{0, \alpha} (\Sigma) : \int_{\Sigma} u = 0
     \right\} . \]
  Given small $\delta > 0$ and $\varepsilon > 0$, we define the map
  \[ \Phi : (- \varepsilon, \varepsilon) \times B (0, \delta) \to \mathcal{Z}
     \times C^{0, \alpha} (\partial \Sigma) \]
  given by
\begin{align}
& \Phi (t, u) \\
= & \left( (H_{t + u} - \bar{h}_{t + u}) - \tfrac{1}{| \Sigma |}
\int_{\Sigma} (H_{t + u} - \bar{h}_{t + u}), \langle X_{t + u}, N_{t + u}
\rangle - \cos \bar{\gamma}_{t + u} \right) . \label{phi}
\end{align}
Here, $B(0,\delta)$ is a ball of radius $\delta>0$ centered at the zero function in $\mathcal{Y}$.
  For each $v \in \Sigma$, the map
  \[ f : (x, s) \in \Sigma \times (- \varepsilon, \varepsilon) \to \phi (x, s
     v (x)) \in M \]
  gives a variation with
  \[ \tfrac{\partial f}{\partial s} |_{s = 0} = \tfrac{\partial}{\partial s}
     \phi (x, s v (x)) |_{s = 0} = v N. \]
  Since $\Sigma$ is infinitesimally rigid and using also \eqref{first
  variation of mean curvature difference} and \eqref{first variation of angle
  difference}, we obtain that
  \[ D \Phi_{(0, 0)} (0, v) = \tfrac{\mathrm{d}}{\mathrm{d} s} \Phi (0, s v)
     |_{s = 0} = \left( - \Delta v + \tfrac{1}{| \Sigma |} \int_{\partial
     \Sigma} \Delta v, - \sin \bar{\gamma} \tfrac{\partial v}{\partial \nu}
     \right) . \]
  It follows from the elliptic theory for the Laplace operator with Neumann
  type boundary conditions that $D \Phi (0, 0)$ is an isomorphism when
  restricted to $0 \times \mathcal{Y}$.
  
  Now we apply the implicit function theorem: For some smaller $\varepsilon$,
  there exists a function $u (t) \in B (0, \delta) \subset \mathcal{X}$, $t
  \in (- \varepsilon, \varepsilon)$ such that $u (0) = 0$ and $\Phi (t, u (t))
  = \Phi (0, 0) = (0, 0)$ for every $t$. In other words, the surfaces
  \[ \Sigma_{t + u (t)} = \{\phi (x, t + u (t)) : x \in \Sigma\} \]
  are of constant $H - \bar{h}$ with prescribed angles $\bar{\gamma}$.
  
  Let $w (x, t) = t + u (t) (x)$ where $(x, t) \in \Sigma \times (-
  \varepsilon, \varepsilon)$. By definition, $w (x, 0) = 0$ for every $x \in
  \Sigma$ and $w (\cdot, t) - t = u (t) \in B (0, \delta) \subset \mathcal{X}$
  for every $t \in (- \varepsilon, \varepsilon)$. Observe that the map $s
  \mapsto \phi (x, w (x, s))$ gives a variation of $\Sigma$ with variational
  vector field is given by
  \[ \tfrac{\partial \phi}{\partial t} \tfrac{\partial w}{\partial s} |_{s =
     0} = \tfrac{\partial w}{\partial s} |_{s = 0} Y. \]
  Since for every $t$ we have that
\begin{align}
0 = & \Phi (t, u (t)) \\
= & \left( (H_{w (\cdot, t)} - \bar{h}_{w (\cdot, t)}) - \tfrac{1}{|
\Sigma |} \int_{\Sigma} (H_{w (\cdot, t)} - \bar{h}_{w (\cdot, t)}),
\langle X_{t + u}, N_{t + u} \rangle - \cos \bar{\gamma}_{t + u} \right),
\end{align}
  by taking the derivative at $t = 0$ we conclude that
  \[ \langle \tfrac{\partial w}{\partial t} |_{t = 0} Y, N \rangle =
     \tfrac{\partial w}{\partial t} |_{t = 0} \]
  satisfies the homogeneous Neumann problem. Therefore, it is constant on
  $\Sigma$. Since
  \[ \int_{\Sigma} (w (x, t) - t) = \int_{\Sigma} u (x, t) = 0 \]
  for every $t$, by taking derivatives at $t = 0$ again, we conclude that
  \[ \int_{\Sigma} \tfrac{\partial w}{\partial t} |_{t = 0} = | \Sigma | . \]
  Hence, $\tfrac{\partial w}{\partial t} |_{t = 0} = 1$. Taking $\varepsilon$
  small, we see that $\phi (x, w (x, t))$ parameterize a foliation near
  $\Sigma$.
\end{proof}

\begin{theorem}
  \label{H ode}There exists a continuous function $\Psi (t)$ such that
  \[ \tfrac{\mathrm{d}}{\mathrm{d} t} \left( \exp (- \int_0^t \Psi (\tau)
     \mathrm{d} \tau) (H - \bar{h}) \right) \leqslant 0. \]
\end{theorem}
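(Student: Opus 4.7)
My goal is a first-order linear differential inequality $\mathcal{H}'(t) \leq \Psi(t)\mathcal{H}(t)$ for $\mathcal{H}(t) := H_t - \bar{h}_t$, the leaf-constant value of $H - \bar{h}$ on $\Sigma_t$, with $\Psi$ continuous; setting $\Phi(t) = \exp(-\int_0^t \Psi(\tau)\mathrm{d}\tau)$, this is equivalent to $\tfrac{\mathrm{d}}{\mathrm{d}t}(\Phi \cdot \mathcal{H}) \leq 0$, which is the statement.

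First I derive the linearised equation on each leaf. Let $f_t$ be the lapse of the foliation. Because $H - \bar{h}$ is leaf-constant, its tangential derivative vanishes along $\Sigma_t$, so \eqref{first variation of mean curvature difference} gives $L_t f_t = \mathcal{H}'(t)$ on $\Sigma_t$; similarly the prescribed-angle invariance of the foliation together with \eqref{first variation of angle difference} yields the Robin condition $B_t f_t = 0$ on $\partial \Sigma_t$. Since $f_0 \equiv 1$, $f_t$ stays positive for small $|t|$. Integrating $L_t f_t = \mathcal{H}'(t)$ over $\Sigma_t$ against $1$ and using divergence theorem with $B_t f_t = 0$ yields
\[
\mathcal{H}'(t)|\Sigma_t| = -\int_{\Sigma_t}(|A|^2 + \operatorname{Ric}(N) + \partial_N \bar{h}) f_t - \int_{\partial \Sigma_t} q_t f_t.
\]

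Next I apply the Schoen-Yau rewrite \eqref{sy rewrite} and the Ros-Souam rewrite \eqref{rs rewrite} with $H_t = \bar{h}_t + \mathcal{H}(t)$, substitute the comparisons $R_g \geq R_{\bar{g}}$, $H_{\partial M} \geq \bar{H}_{\partial M}$, and $\partial_N \bar{h} \geq \bar{h}'$, and invoke the identities $R_{\bar{g}} + \tfrac{3}{2}\bar{h}^2 + 2\bar{h}' = 2K(p_x)/\psi^2$ and the boundary conformal rewrite from the proof of Lemma \ref{simple consequence}. Expanding $H_t^2 = \bar{h}_t^2 + 2\bar{h}_t \mathcal{H}(t) + \mathcal{H}(t)^2$ separates the right-hand side into (i) a rigid part $-\int_{\Sigma_t}(K(p_x)/\psi^2 - K_t)f_t - \int_{\partial \Sigma_t}(\tfrac{\hat{H} - \partial_{\psi\eta}\bar{\gamma}}{\psi\sin\bar{\gamma}} - \kappa_t)f_t$, (ii) a linear-in-$\mathcal{H}(t)$ part whose coefficient $\Psi_1(t) := -\tfrac{3}{2}\int_{\Sigma_t}\bar{h}_t f_t + \int_{\partial \Sigma_t}\cot\bar{\gamma}\,f_t$ is continuous in $t$, and (iii) two non-positive remainders $-\tfrac{3}{4}\mathcal{H}(t)^2 \int f_t$ and $-\tfrac{1}{2}\int|A^0|^2 f_t$. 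Weighted by $1$ the rigid part is $\leq 0$ thanks to Gauss-Bonnet $\int K_t + \int \kappa_t = 2\pi\chi(\Sigma_t) = 2\pi$ combined with Lemma \ref{lm:key in lower bound}; the actual $f_t$-weighted version differs from it by an integral of $f_t - 1 = O(t)$ against the factor $K(p_x)/\psi^2 - K_t$, which vanishes on $\Sigma_0$ by \eqref{vanishing gauss and constant geodesic} and is $O(t)$. This produces an $O(t^2)$ error, comparable in order to $\mathcal{H}(t)$ itself (also $O(t^2)$ since $\mathcal{H}(0) = \mathcal{H}'(0) = 0$), and absorbable into a continuous modification of $\Psi_1$, yielding the desired $\mathcal{H}'(t) \leq \Psi(t)\mathcal{H}(t)$.

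\textbf{Main obstacle.} The delicate point is the weight mismatch between the $f_t$-weighted integrated stability identity and the naturally unweighted Lemma \ref{lm:key in lower bound}. Absorption works because $\Sigma_0$ is infinitesimally rigid: by \eqref{vanishing gauss and constant geodesic} one has $L_0 \cdot 1 = 0$ and $B_0 \cdot 1 = 0$, so $1$ lies in the kernel of the Robin operator $(L_0, B_0)$, and elliptic regularity on $L_t f_t = \mathcal{H}'(t)$ pins $\|f_t - 1\|_{C^{2,\alpha}}$ at the order needed to balance the weight discrepancy against the leading $\Psi_1(t)\mathcal{H}(t)$ term.
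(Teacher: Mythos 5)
Your plan diverges from the paper's proof at a decisive point: the choice of test weight when integrating the linearised equation. You test the identity $L_t f_t = \mathcal{H}'(t)$ against the constant function $1$, producing
\[
\mathcal{H}'(t)\,|\Sigma_t| \;=\; -\int_{\Sigma_t}\bigl(|A_t|^2 + \operatorname{Ric}(N_t) + \partial_{N_t}\bar{h}\bigr)\, f_t \;-\; \int_{\partial\Sigma_t} q_t\, f_t ,
\]
which carries an $f_t$-weight that has to be matched against the weight-$1$ inequality of Lemma \ref{lm:key in lower bound}. The paper instead multiplies \eqref{s variation} by $1/v_t$ and integrates; after integration by parts the resulting term $\int_{\Sigma_t} |\nabla v_t|^2 / v_t^2$ is nonnegative and is dropped, and all remaining integrals are naturally weight-$1$, so they feed directly into Lemma \ref{lm:key in lower bound} with no error to absorb. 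This is the Fischer--Colbrie--Schoen substitution, and it produces the clean ODE $\mathcal{H}'(t)\int_{\Sigma_t}\tfrac{1}{v_t} \leq \bigl(\int_{\partial\Sigma_t}\cot\bar{\gamma} - \tfrac{3}{2}\int_{\Sigma_t}\bar{h}\bigr)\mathcal{H}(t)$ at once.

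The gap in your argument is the absorption step. Writing $f_t = 1 + (f_t - 1)$ leaves the error
\[
E(t) \;=\; -\int_{\Sigma_t}\bigl(|A_t|^2 + \operatorname{Ric}(N_t) + \partial_{N_t}\bar{h}\bigr)(f_t - 1) \;-\; \int_{\partial\Sigma_t} q_t\, (f_t - 1).
\]
To conclude $\mathcal{H}'(t)\leq\Psi(t)\mathcal{H}(t)$ with $\Psi$ continuous (hence locally bounded), you need $E(t)$ to be pointwise bounded by a constant multiple of $\mathcal{H}(t)$ (with the correct sign) on a whole neighbourhood of $t=0$. The observation that both $E(t)$ and $\mathcal{H}(t)$ are $O(t^2)$ does not deliver this: two quantities of the same Landau order can have an unbounded ratio (for instance $E(t)\sim t^2$ against $\mathcal{H}(t)\sim t^2\sin(1/t)$, or $\mathcal{H}(t)\equiv 0$ on a subinterval while $E(t)\neq 0$). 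What elliptic regularity on $L_t f_t = \mathcal{H}'(t)$, $B_t f_t = 0$ gives is $\|f_t - 1\|_{C^{2,\alpha}} = O(|\mathcal{H}'(t)| + t)$, because $L_t 1$ and $B_t 1$ are only $O(t)$, not zero; this bound is insufficient to extract $|E(t)|\lesssim|\mathcal{H}(t)|$. In short, the error $E(t)$ is controlled by the geometry of the foliation (through $f_t - 1$) rather than by $\mathcal{H}(t)$, and nothing in your argument ties the two together. This is precisely the difficulty that the paper's $1/v_t$ weight is designed to eliminate, so the step you flagged as the ``main obstacle'' is in fact an unfilled gap rather than a routine elliptic estimate.
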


\begin{proof}
  Let $\psi : \Sigma \times I \to M$ parameterize the foliation, $Y =
  \tfrac{\partial \psi}{\partial t}$, $v_t = \langle Y, N_t \rangle$. Then
  \begin{equation}
    - \tfrac{\mathrm{d}}{\mathrm{d} t} (H - \bar{h}) = \Delta_t v_t +
    (\ensuremath{\operatorname{Ric}}(N_t) + |A_t |^2) v_t + v_t \nabla_{N_t}
    \bar{h} \text{ in } \Sigma_t, \label{s variation}
  \end{equation}
  and
  \begin{equation}
    \tfrac{\partial v_t}{\partial \nu_t} = [- \cot \bar{\gamma}  A_t (\nu_t,
    \nu_t) + \tfrac{1}{\sin \bar{\gamma}} A_{\partial M} (\eta_t, \eta_t) +
    \tfrac{1}{\sin^2 \bar{\gamma}} \nabla_{\eta_t} \cos \bar{\gamma}] v_t .
    \label{boundary derivative v t for foliation}
  \end{equation}
  By shrinking the interval if needed, we assume that $v_t > 0$ for $t \in
  I$. By multiplying of \eqref{s variation} and integrate on $\Sigma_t$, we
  deduce by integration by parts and applying the Schoen-Yau rewrite \eqref{sy
  rewrite} that
\begin{align}
& - (H - \bar{h})' \int_{\Sigma_t} \tfrac{1}{v_t} \\
= & \int_{\Sigma_t} \tfrac{\Delta_t v_t}{v_t} +
(\ensuremath{\operatorname{Ric}}(N_t) + |A_t |^2 + \nabla_{N_t} \bar{h})
\\
= & \int_{\partial \Sigma_t} \tfrac{1}{v_t} \tfrac{\partial v_t}{\partial
\nu_t} + \tfrac{1}{2} \int_{\Sigma_t} (R_g + |A_t |^2 + H_t^2 + 2
\nabla_{N_t} \bar{h}) - \int_{\Sigma_t} K_{\Sigma_t} + \int_{\Sigma_t}
\tfrac{| \nabla v_t |^2}{v_t^2} .
\end{align}
  Let $\chi = A - \tfrac{1}{2} \bar{h} \sigma$, we have that
\begin{align}
& |A_t |^2 \\
= & | \chi + \tfrac{1}{2} \bar{h} \sigma |^2 \\
= & | \chi |^2 + \langle \chi, \bar{h} \sigma \rangle + \tfrac{1}{2}
\bar{h}^2, \\
= & | \chi^0 |^2 + \tfrac{1}{2} (\ensuremath{\operatorname{tr}}_{\sigma}
\chi)^2 + \bar{h} \ensuremath{\operatorname{tr}}_{\sigma} \chi +
\tfrac{1}{2} \bar{h}^2,
\end{align}
  where $\chi^0$ is the traceless part of $\chi$. Also,
  \[ H^2 = (\ensuremath{\operatorname{tr}}_{\sigma} \chi + \bar{h})^2 =
     (\ensuremath{\operatorname{tr}}_{\sigma} \chi)^2 +
     2\ensuremath{\operatorname{tr}}_{\sigma} \chi \bar{h} + \bar{h}^2 . \]
  So
\begin{align}
& - (H - \bar{h})' \int_{\Sigma_t} \tfrac{1}{v_t} \\
= & \int_{\partial \Sigma_t} \tfrac{1}{v_t} \tfrac{\partial v_t}{\partial
\nu_t} + \tfrac{1}{2} \int_{\Sigma_t} (R_g + |A_t |^2 + H_t^2 + 2
\nabla_{N_t} \bar{h}) - \int_{\Sigma_t} K_{\Sigma_t} + \int_{\Sigma_t}
\tfrac{| \nabla v_t |^2}{v_t^2} \\
= & \int_{\partial \Sigma_t} \tfrac{1}{v_t} \tfrac{\partial v_t}{\partial
\nu_t} + \tfrac{1}{2} \int_{\Sigma_t} (R_g + \tfrac{3}{2} \bar{h}^2 + 2
\nabla_{N_t} \bar{h}) \\
& + \tfrac{1}{2} \int_{\Sigma_t} | \chi^0 |^2 + \tfrac{3}{2}
(\ensuremath{\operatorname{tr}}_{\sigma} \chi)^2 + 3 \bar{h}
\ensuremath{\operatorname{tr}}_{\sigma} \chi - \int_{\Sigma_t}
K_{\Sigma_t} + \int_{\Sigma_t} \tfrac{| \nabla v_t |^2}{v_t^2} \\
\geqslant & \int_{\partial \Sigma_t} \tfrac{1}{v_t} \tfrac{\partial
v_t}{\partial \nu_t} + \int_{\Sigma_t} \tfrac{K(p_x)}{\psi^2 (t_x)} +
\tfrac{3}{2} (H - \bar{h}) \int_{\Sigma_t}  \bar{h} - \int_{\Sigma_t}
K_{\Sigma_t},
\end{align}
  where in the last line we have also used the bound \eqref{interior lower
  bound simplify}. Now we use \eqref{boundary derivative v t for foliation}
  and also the rewrite \eqref{rs rewrite}, we see that
\begin{align}
& - (H - \bar{h})' \int_{\Sigma_t} \tfrac{1}{v_t} \\
\geqslant & \int_{\partial \Sigma_t} [- \cot \bar{\gamma}  A_t (\nu_t,
\nu_t) + \tfrac{1}{\sin \bar{\gamma}} A_{\partial M} (\eta_t, \eta_t) +
\tfrac{1}{\sin^2 \bar{\gamma}} \nabla_{\eta_t} \cos \bar{\gamma}]
\\
& \quad + \int_{\Sigma_t} \tfrac{K(p_x)}{\psi^2 (t_x)} + \tfrac{3}{2} (H -
\bar{h}) \int_{\Sigma_t}  \bar{h} - \int_{\Sigma_t} K_{\Sigma_t}
\\
\geqslant & \int_{\partial \Sigma_t} [- \kappa_{\partial \Sigma_t} - H (t)
\cot \bar{\gamma} + \tfrac{1}{\sin \bar{\gamma}} H_{\partial M} +
\tfrac{1}{\sin^2 \bar{\gamma}} \nabla_{\eta_t} \cos \bar{\gamma}]
\\
& \quad + \int_{\Sigma_t} \tfrac{K(p_x)}{\psi^2 (t_x)} + \tfrac{3}{2} (H -
\bar{h}) \int_{\Sigma_t}  \bar{h} - \int_{\Sigma_t} K_{\Sigma_t} \\
= & - \left( \int_{\Sigma_t} K_{\Sigma_t} + \int_{\partial \Sigma_t}
\kappa_{\partial \Sigma_t} \right) + \left[ \int_{\Sigma_t}
\tfrac{K(p_x)}{\psi^2 (t_x)} + \int_{\partial \Sigma_t} \left( \tfrac{1}{\sin
\bar{\gamma}} H_{\partial M} - \bar{h} \cot \bar{\gamma} +
\tfrac{1}{\sin^2 \bar{\gamma}} \nabla_{\eta_t} \cos \bar{\gamma} \right)
\right] \\
& \quad + \tfrac{3}{2} (H - \bar{h}) \int_{\Sigma_t}  \bar{h} - (H -
\bar{h}) \int_{\partial \Sigma_t} \cot \bar{\gamma} .
\end{align}
  It follows from Lemma \ref{lm:key in lower bound} and the proof of Lemma \ref{simple consequence} that the second term in the big bracket is
  bounded below by $2 \pi$. Using also the Gauss-Bonnet theorem on the first term in the bracket, we see that
  \begin{equation}
    - (H - \bar{h})' \int_{\Sigma_t} \tfrac{1}{v_t} \geqslant (H - \bar{h})
    (\tfrac{3}{2} \int_{\Sigma_t} \bar{h} - \int_{\partial \Sigma_t} \cot
    \bar{\gamma}) .
  \end{equation}
  Let
  \begin{equation}
    \Psi (t) = \left( \int_{\Sigma_t} \tfrac{1}{v_t} \right)^{- 1}
    (\int_{\partial \Sigma_t} \cot \bar{\gamma} - \tfrac{3}{2} \int_{\Sigma_t}
    \bar{h}), \label{Psi}
  \end{equation}
  then note that we have assume that $v_t > 0$ near $t = 0$, so $H - \bar{h}$
  satisfies the ordinary differential inequality
  \begin{equation}
    (H - \bar{h})' - \Psi (t) (H - \bar{h}) \leqslant 0. \label{original ode}
  \end{equation}
  We see then
  \[ \tfrac{\mathrm{d}}{\mathrm{d} t} \left( \exp \left( - \int_0^t \Psi
     (\tau) \mathrm{d} \tau \right) (H - \bar{h}) \right) \leqslant 0. \]
  So the function $\exp (- \int_0^t \Psi (\tau) \mathrm{d} \tau) (H -
  \bar{h})$ is non-increasing.
\end{proof}

\subsection{From local foliation to rigidity}

Let $\Sigma_t$ be the constant mean curvature surfaces with prescribed contact
angles $\bar{\gamma}$ with $\partial M$.

\begin{proposition}
  \label{leaf infi rigid}Every $\Sigma_t$ constructed in Theorem
  \ref{foliation near infinitesimal} is infinitesimally rigid.
\end{proposition}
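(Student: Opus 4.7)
The plan is to show that the constant $c(t) := H_t - \bar{h}_t$ carried by each leaf $\Sigma_t$ of the foliation from Theorem \ref{foliation near infinitesimal} vanishes identically for $t$ near $0$. Once $c \equiv 0$, every inequality in the derivation of Theorem \ref{H ode} (and the underlying Lemma \ref{lm:key in lower bound} and Schoen-Yau rewrite \eqref{sy}) becomes an equality, so the infinitesimal rigidity of $\Sigma_t$ follows by retracing exactly the chain of equalities that produced the infinitesimal rigidity of $\Sigma_0 = \Sigma$ in Section \ref{sec:cap}.

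First I compute the first variation of $F$ along the foliation. Because each $\Sigma_t$ meets $\partial M$ at the prescribed angle $\bar{\gamma}$ and the variation vector is tangent to $\partial M$ by construction of the foliation, the identity $\nu_t - \eta_t \cos \bar{\gamma} = \sin \bar{\gamma}\, X$ along $\partial \Sigma_t$ forces the boundary term in \eqref{first variation} to vanish. Using that $c(t)$ is constant on each leaf, this yields
\begin{equation}
\tfrac{d}{dt} F(\Sigma_t) = c(t) \int_{\Sigma_t} v_t.
\end{equation}

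Next I combine this with the ODE of Theorem \ref{H ode} and the minimizing property of $\Sigma_0$ for the variational problem \eqref{variational problem}. The ODE asserts that $\exp(-\int_0^t \Psi(\tau)\, d\tau)\, c(t)$ is non-increasing; since $c(0) = 0$ (as $\Sigma_0$ is a $t$-level set with mean curvature $\bar{h}$ by Lemma \ref{simple consequence}), one gets $c(t) \leqslant 0$ for $t \geqslant 0$ and $c(t) \geqslant 0$ for $t \leqslant 0$. With $v_t > 0$, integrating the first variation yields $F(\Sigma_t) \leqslant F(\Sigma_0)$ for all $t$ in the foliation interval. On the other hand, $\Sigma_0$ realizes the infimum of \eqref{variational problem} and $E_t$ remains admissible for $|t|$ small, so $F(\Sigma_t) \geqslant F(\Sigma_0)$. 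Hence $F(\Sigma_t) = F(\Sigma_0)$ and $c(t) \equiv 0$, i.e.\ $H_t = \bar{h}_t$ pointwise.

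Finally, with $c \equiv 0$, I trace back the equality cases in the derivation of the ODE. The Schoen-Yau rewrite forces the traceless part $\chi_t^0$ (equivalently $A_t^0$) to vanish; equality in \eqref{interior lower bound simplify} forces $R_g = R_{\bar{g}}$ on $\Sigma_t$ together with $N_t = \bar{N}_t$ from the Cauchy-Schwarz equality in $\partial_N \bar{h} \geqslant |N|_{\bar{g}}\bar{h}'$; equality in the boundary estimate forces $H_{\partial M} = \bar{H}_{\partial M}$ along $\partial \Sigma_t$; and the equality case of Lemma \ref{lm:key in lower bound} forces $\bar{\Sigma}_t$ to lie in a single $t$-level set. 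The Gauss and geodesic curvature identities \eqref{vanishing gauss and constant geodesic} then follow by inserting $f = 1$, $\mu_1 = 0$ into the eigenvalue problem on $\Sigma_t$ exactly as was done for $\Sigma_0$. The main obstacle is the middle step: neither the ODE nor the minimality of $\Sigma_0$ alone controls $c(t)$, and only coupling the one-sided sign information from the ODE with the variational comparison $F(\Sigma_t) \geqslant F(\Sigma_0)$ produces $c \equiv 0$.
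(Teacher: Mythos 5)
Your proposal is correct and takes essentially the same approach as the paper: the paper likewise differentiates $F(\Sigma_t)$ via the first variation, uses the sign information from Theorem \ref{H ode} to get $F(\Sigma_t) \leqslant F(\Sigma_0)$, and invokes the minimality of $\Sigma_0$ to force equality, whence every $\Sigma_t$ is a minimiser. Your explicit step of deducing $c(t) \equiv 0$ and then tracing back the equality chain is the same content as the paper's terser conclusion that each $\Sigma_t$ is a minimiser and hence infinitesimally rigid.
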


\begin{proof}
  Let $\Omega_t$ be the component of $M\backslash \Sigma_t$ whose closure
  contains $P_+ \cap \partial M$. We abuse the notation and define
  \[ F (t) = | \Sigma_t | - \int_{\Omega_t} \bar{h} - \int_{\partial \Omega_t}
     \cos \bar{\gamma} . \]
  By the first variation formula \eqref{first variation},
  \[ F (t_2) - F (t_1) = \int_{t_1}^{t_2} \mathrm{d} t \int_{\Sigma_t} (H -
     \bar{h}) v_t . \]
  By Theorem \ref{H ode},
  \[ H - \bar{h} \leqslant 0 \text{ if } t \geqslant 0 ; \text{ } H - \bar{h}
     \geqslant 0 \text{ if } t \leqslant 0, \]
  which in turn implies that
  \[ F (t) \leqslant 0 \text{ if } t \geqslant 0 ; \text{ } F (t) \leqslant 0
     \text{ if } t \leqslant 0. \]
  However, $\Omega_t$ is a minimiser to the functional \eqref{action}, hence
  \[ F (t) \equiv F (0) . \]
  It then follows every $\Sigma_t$ is a minimiser, hence infinitesimally
  rigid.
\end{proof}

We can conclude the proof of item (\ref{item simple}) of Theorem \ref{rigidity depending on structures}.

\begin{proof}[Proof of item (\ref{item simple}) of Theorem \ref{rigidity depending on structures}]
  We note easily by the assumptions of item (\ref{item simple}) of Theorem \ref{rigidity depending on structures} that
  $\Sigma_{\pm} = P_{\pm} \cap \partial M$ are a set of barriers (see Definition
  \ref{barrier condition}), by the maximum principle, there exists a minimiser
  $E$ to \eqref{variational problem} such that $E$ is either empty or
  $\partial E\backslash \partial_s M$ or lies entirely away from $P_{\pm}$.
  Without loss of generality, we assume that $\Sigma = \partial E \cap
  \ensuremath{\operatorname{int}}M$ non-empty. By \cite{de-philippis-regularity-2015}, $\Sigma$ is a regular stable surface of prescribed mean curvature $\bar h$ and prescribed contact angle $\bar\gamma$. Moreover, the second variation $\mathcal{A}'' (0) \geqslant 0$ in
  \eqref{Q} for any smooth family $\Sigma_s$ such that $\Sigma_0 = \Sigma$.
  
  Let $Y = \tfrac{\mathrm{d}}{\mathrm{d} t} \phi (x, w (x, t))$ where $\phi$
  and $w$ are as Theorem \ref{foliation near infinitesimal}, we show first
  that $N_t$ is conformal. It suffices to show that $Y^{\bot}$ is conformal.
  
  Since every $\Sigma_t$ is infinitesimally rigid by Proposition \ref{leaf
  infi rigid}, from \eqref{eigen problem} and \eqref{constant in bilinear
  form}, we know that $\langle Y, N_t \rangle$ is a constant. Let
  $\partial_i$, $i = 1, 2$ be vector fields induced by local coordinates on
  $\Sigma$, $\partial_i$ also extends to a neighborhood of $\Sigma$ via the
  diffeomorphism $\phi$. We have $\nabla_{\partial_i} \langle Y, N \rangle =
  0$. Note that $\Sigma_t$ are umbilical with constant mean curvature
  $\bar{h}$, so
  \[ \nabla_{\partial_i} N \equiv \tfrac{1}{2} \bar{h} \partial_i \]
  and
\begin{align}
0 & = \nabla_{\partial_i} \langle Y, N \rangle \\
& = \langle \nabla_{\partial_i} Y, N \rangle + \langle Y,
\nabla_{\partial_i} N \rangle \\
& = \langle \nabla_{\partial_i} Y, N \rangle + \tfrac{1}{2} \bar{h}
\langle Y, \partial_i \rangle .
\end{align}
  On the other hand,
\begin{align}
0 & = \langle \nabla_{\partial_i} Y, N \rangle = \langle \nabla_Y
\partial_i, N \rangle \\
& = Y \langle \partial_i, N \rangle - \langle \partial_i, \nabla_Y N
\rangle \\
& = - \langle \partial_i, \nabla_Y N \rangle \\
& = - \langle \partial_i, \nabla_{Y^{\top}} N \rangle - \langle
\partial_i, \nabla_{Y^{\bot}} N \rangle \\
& = - \tfrac{1}{2} \bar{h} \langle Y^{\top}, \partial_i \rangle - \langle
\partial_i, \nabla_{Y^{\bot}} N \rangle .
\end{align}
  Combining the two equations above, we conclude that $\nabla_{Y^{\bot}} N =
  0$ which implies that $\Sigma$ foliates a warped product under the
  diffeomorphism $\phi$ (parameterized by $t$). Considering that the induced
  metric on $\Sigma$ agrees with the induced metric from $\bar{g}$, we
  conclude that $g = \bar{g}$.
\end{proof}

\section{Construction of barriers (I)}\label{sec:cone}

In this section, we prove item (\ref{item metric cone}) of Theorem \ref{rigidity depending on structures}. Our strategy is to construct a
surface $\Sigma_-$ ($\Sigma_+$) which 
serves as a lower (upper) barrier, and to use item (\ref{item simple}) of Theorem \ref{rigidity depending on structures} to finish the proof. This
section is occupied by such a construction of $\Sigma_-$.

\subsection{Setting up coordinates and notations}

For convenience, we set $t_- = 0$. As before, for any $t>0$, we set $\Sigma_t$ to be the $t$-level set of $t$ and $\Omega_t$ to be the $t$-sublevel set, that is, all points of $M$ which lie below $\Sigma_t$. Since both $(M, g)$ and $(M, \bar{g})$ has
cone structures near where $t_- = 0$ where each cross-section of the cone is
a topological disk and it collapses to a point which we denote by $p_0$. 

In the following subsections, we construct graphical perturbations
$\Sigma_{t, t^2 u}$ of $\Sigma_t$. Let $\Sigma_{t , t^2 u}$ be the surface
which consists of points $x + t^2 u (x, t) N_t (x)$ where $N_t$ is the unit
normal of $\Sigma_t$ with respect to the metric $g$ at $x \in \Sigma_t$. The
boundary $\partial \Sigma_{t , t^2 u}$ might not lie in $\partial_s M$, we can
compensate this by expanding or shrinking $\Sigma_{t , t^2 u}$ a little, and we still
denote the resulting surface $\Sigma_{t, t^2 u}$.

We use a $t$ subscript on every geometric quantity on $\Sigma_t$ and a $t, t^2
u$ subscript on every geometric quantity on $\Sigma_{t, t^2 u}$. We will
explicitly indicate when there was confusion or change.
\begin{figure}[ht]
    \centering
	\begingroup
	\def\svgwidth{0.8\columnwidth}
\begingroup%
  \makeatletter%
  \providecommand\color[2][]{%
    \errmessage{(Inkscape) Color is used for the text in Inkscape, but the package 'color.sty' is not loaded}%
    \renewcommand\color[2][]{}%
  }%
  \providecommand\transparent[1]{%
    \errmessage{(Inkscape) Transparency is used (non-zero) for the text in Inkscape, but the package 'transparent.sty' is not loaded}%
    \renewcommand\transparent[1]{}%
  }%
  \providecommand\rotatebox[2]{#2}%
  \newcommand*\fsize{\dimexpr\f@size pt\relax}%
  \newcommand*\lineheight[1]{\fontsize{\fsize}{#1\fsize}\selectfont}%
  \ifx\svgwidth\undefined%
    \setlength{\unitlength}{680.31496063bp}%
    \ifx\svgscale\undefined%
      \relax%
    \else%
      \setlength{\unitlength}{\unitlength * \real{\svgscale}}%
    \fi%
  \else%
    \setlength{\unitlength}{\svgwidth}%
  \fi%
  \global\let\svgwidth\undefined%
  \global\let\svgscale\undefined%
  \makeatother%
  \begin{picture}(1,0.5)%
    \lineheight{1}%
    \setlength\tabcolsep{0pt}%
    \put(0,0){\includegraphics[width=\unitlength,page=1]{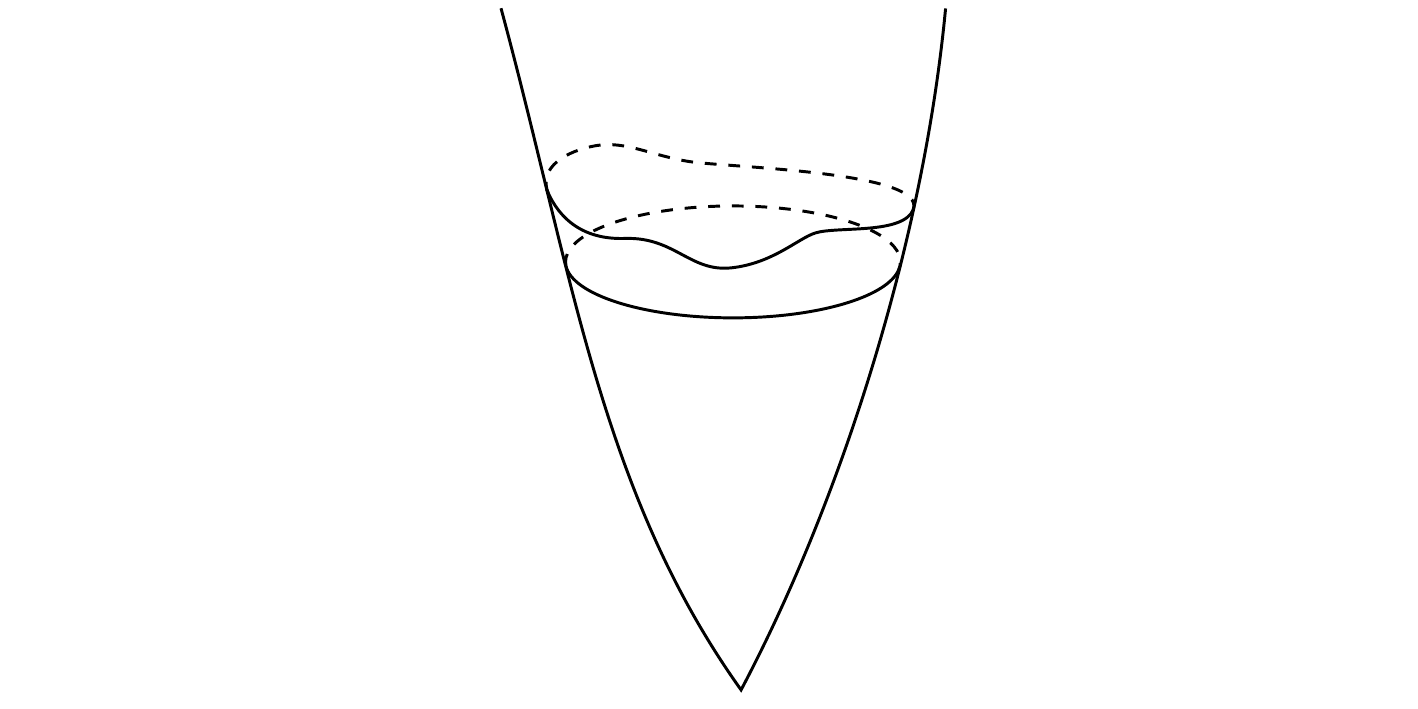}}%
    \put(0.51371514,-0.0235302){\color[rgb]{0,0,0}\makebox(0,0)[lt]{\lineheight{0}\smash{\begin{tabular}[t]{l}$P_0$\end{tabular}}}}%
    \put(0.42137305,0.00872904){\color[rgb]{0,0,0}\makebox(0,0)[lt]{\lineheight{0}\smash{\begin{tabular}[t]{l}$t=0$\end{tabular}}}}%
    \put(0.32216695,0.29923846){\color[rgb]{0,0,0}\makebox(0,0)[lt]{\lineheight{0}\smash{\begin{tabular}[t]{l}$\Sigma_t$\end{tabular}}}}%
    \put(0.29631618,0.36021392){\color[rgb]{0,0,0}\makebox(0,0)[lt]{\lineheight{0}\smash{\begin{tabular}[t]{l}$\Sigma_{t,t^2u}$\end{tabular}}}}%
    \put(0,0){\includegraphics[width=\unitlength,page=2]{conical-singularity.pdf}}%
  \end{picture}%
\endgroup%

	\endgroup

    \caption{Construction of $\Sigma_{t,t^2u}$.}
    \label{fig:conical-singularity}
\end{figure}

\subsection{Capillary foliation with constant $H -
\bar{h}$}\label{foliation}We assume that $(M, g)$ and $(M, \bar{g})$ have
isometric tangent cones at $p_0$ and we construct a foliation of constant $H -
\bar{h}$ with prescribed angles $\bar{\gamma}$ near $p_0$. In fact, later in
Subsection \ref{strict barrier}, it is shown that this is the only case.

By the first variation formula of the mean curvatures
\begin{equation}
  H_{t, t^2 u} - H_t = - \Delta_t u - t^2
  (\ensuremath{\operatorname{Ric}}(N_t) + |A_t |^2) u + O (t), \label{mean
  curvature taylor expansion}
\end{equation}
where $\Delta_t$ is the Laplacian with respect to the induced rescaled metric
$t^{- 2} g|_{\Sigma_t}$. Note that $\ensuremath{\operatorname{Ric}} (N_t) = O
(t^{- 1})$ by the fact the tangent cone is $\mathrm{d} t^2 + a^2 t^2
g_{S^2}$. By the Taylor expansion of the function $\bar{h}$, we see
that
\begin{equation}
  \bar{h}_{t, t^2 u} - \bar{h}_t = \bar{h}' (t) t^2 u = t^2 u \nabla_{N_t}
  \bar{h} + O (t) . \label{prescribed function expansion}
\end{equation}
So
\begin{equation}
  (H_{t, t^2 u} - \bar{h}_{t, t^2 u}) - (H_t - \bar{h}_t) = - \Delta_t u - t^2
  (\ensuremath{\operatorname{Ric}}(N_t) + |A_t |^2 + \nabla_{N_t} \bar{h}) u +
  O (t) . \label{shear expansion}
\end{equation}
Note that both $H_t - \bar{h}_t$ and $H_{t, t^2 u} - \bar{h}_{t, t^2 u}$ are
finite and $|A_t |^2 + \nabla_{N_t} \bar{h} = O (t^{- 1})$ considering that
$(M, g)$ and $(M, \bar{g})$ have isometric tangent cones at $p_0$.

\begin{remark}
  \label{explain 1}We elaborate a bit more on \eqref{mean curvature taylor
  expansion} and its $O (t)$ remainder term. Since the metric $g$ is close to
  $\mathrm{d} t^2 + \psi (t)^2 g_{S^2}$ when $t \to 0^+$, we
  calculate the expansions with respect to the rescaled metric $t^{- 2} g$
  when computing for small $t > 0$. This is similar to
  {\cite{ye-foliation-1991}}. Then we rescale back and we obtain \eqref{mean
  curvature taylor expansion}. The term $O (t)$ involves products of $|A_t |$
  which is of order $t^{- 1}$ with terms of order at most $O (1)$. That is why
  the remainder is only of order $O (t)$ instead of $O (t^2)$.
\end{remark}

Also, the variation of angles give
\begin{align}
& t^{- 1} [\langle X_{t, t^2 u}, N_{t, t^2 u} \rangle - \langle X_t, N_t
\rangle] \\
= & - \sin \gamma \tfrac{\partial u}{\partial \nu_t} + t (- \cos \gamma A
(t^{- 1} \nu_t, t^{- 1} \nu_t) + A_{\partial M} (\eta_t {,} \eta_t)) u + O
(t^2), \label{angle variation}
\end{align}
where $\nu_t$ is the outward unit normal of $\partial \Sigma_t$ in $\Sigma_t$
with respect to the rescaled induced metric $t^{- 2} g|_{\Sigma_t}$ (note that
$t^{- 1} \nu_t$ is of unit length with respect to $g$). Other geometric
quantities are not rescaled. By the variation of the prescribed angle
$\bar{\gamma}$,
\begin{equation}
  t^{- 1} (\cos \bar{\gamma}_{t, t^2 u} - \cos \bar{\gamma}_t) = - \tfrac{t
  u}{\sin \bar{\gamma}} \partial_{\bar{\eta}_t} \cos \bar{\gamma} + O (t^2) .
  \label{background angle difference near pole} 
\end{equation}
So
\begin{align}
& t^{- 1} [(\langle X_{t, t^2 u}, N_{t, t^2 u} \rangle - \cos
\bar{\gamma}_{t, t^2 u}) - (\langle X_t, N_t \rangle - \cos \bar{\gamma}_t)]
\\
  = &- \sin \gamma \tfrac{\partial u}{\partial \nu_t}\\
  &\quad+ t (- \cos \gamma A
(t^{- 1} \nu_t, t^{- 1} \nu_t) + A_{\partial M} (\eta_t {,} \eta_t) +
\tfrac{1}{\sin \bar{\gamma}} \partial_{\bar{\eta}_t} \cos \bar{\gamma}) u +
O (t^2) . 
\label{angle deficit variation}
\end{align}
\begin{remark}
  \label{explain 2}The term $A (t^{- 1} \nu_t, t^{- 1} \nu_t) = O (t^{- 1})$,
  however, we observe that $\lim_{t \to 0} \bar{\gamma}_t = \pi / 2$, and
  $A_{\partial M} (\eta_t, \eta_t) = O (1)$ since $A_{\partial M}
  (\bar{\eta}_t, \bar{\eta}_t) = O (1)$. Or we can calculate with respect to
  the rescaling metric as in Remark \ref{explain 1}.
\end{remark}

Since $g$ and $\bar{g}$ has isometric tangent cone at $p_0$, we see that the
limit of the surface $(\Sigma_t, t^{- 2} g|_{\Sigma_t})$ as $t \to 0 $ is
$(\Sigma, a^2 g_{{S}^2})$ where $\Sigma$ is a scaling copy of a
geodesic disk of radius $\rho (0) = \lim_{t \to 0} \rho (t) > 0$ in the
standard 2-sphere. Consider the spaces
\[ \mathcal{Y}= \left\{ u \in C^{2, \alpha} (\Sigma) \cap C^{1, \alpha}
   (\bar{\Sigma}) : \int_{\Sigma} u = 0 \right\} \]
and
\begin{equation}
  \mathcal{Z}= \left\{ u \in C^{0, \alpha} (\Sigma) : \int_D u = 0 \right\} .
\end{equation}
Given small $\delta > 0$ and $\varepsilon > 0$, we define the map
\begin{equation}
  \Phi : (- \varepsilon, \varepsilon) \times B (0, \delta) \to \mathcal{Z}
  \times C^{1, \alpha} (\partial \Sigma)
\end{equation}
given by $\Phi (t, u) = (\Phi_1 (t, u), \Phi_2 (t, u))$ where $\Phi_i$, $i =
1, 2$ are given by
\begin{align}
\Phi_1 (t, u) = & (H_{t, t^2 u} - \bar{h}_{t, t^2 u}) - \frac{1}{| \Sigma |}
\int_{\Sigma} (H_{t, t^2 u} - \bar{h}_{t, t^2 u}), \\
\Phi_2 (t, u) = & t^{- 1} (\langle X_{t, t^2 u}, N_{t, t^2 u} \rangle - \cos
\bar{\gamma}_{t, t^2 u})
\end{align}
for $t \neq 0$. Here $B (0, \delta) \subset \mathcal{Y}$ is an open ball with
radius $\delta$ in the $C^{2, \alpha}$ norm and the integration on $\Sigma$ is
with respect to the metric $g_{\mathbb{S}^2}$. We extend $\Phi (t, u)$ to $t =
0$ by taking limits, that is,
\begin{equation}
  \Phi (0, u) = \lim_{t \to 0} \Psi (t, u) .
\end{equation}
We have the following proposition.

\begin{proposition}
  \label{foliation construction}For each $t \in [0, \varepsilon)$ with
  $\varepsilon$ small enough, we can find $u_t = u (\cdot, t) \in C^{2,
  \alpha} (\Sigma) \cap C^{1, \alpha} (\bar{\Sigma})$ such that $\int_{\Sigma}
  u (\cdot, t) = 0$ and
  \[ \Phi (t, u_t) = (0, 0) . \]
  In particular, each of the surfaces $\Sigma_{t, t^2 u}$ have constant
  $\lambda_t : = H_{t, t^2 u} - \bar{h}_{t, t^2 u}$ and prescribed angles
  $\gamma_{t, t^2 u} = \bar{\gamma}_{t, t^2 u}$. Moreover, $\lambda_t
  \leqslant 0$ for all small $t \in [0, \varepsilon)$.
\end{proposition}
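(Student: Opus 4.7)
The plan is to apply the implicit function theorem to $\Phi$ at $(t, u) = (0, 0)$, and then to pin down the sign of $\lambda_t$ via a monotonicity ODE along the foliation. Under the hypothesis that the tangent cones of $(M, g)$ and $(M, \bar g)$ at $p_0$ agree, the rescaled level surfaces $(\Sigma_t, t^{-2} g|_{\Sigma_t})$ converge in $C^{2, \alpha}$ to a fixed geodesic disk $(\Sigma, a^2 g_{S^2})$, $\bar\gamma_t \to \pi/2$, and all quantities appearing in \eqref{shear expansion}--\eqref{angle deficit variation} admit well-defined limits as $t \to 0^+$. The $t^2 u$-perturbation together with the $t^{-1}$-rescaling of the angle deficit are calibrated precisely so that the singular factors $|A_t|^2$, $\operatorname{Ric}(N_t)$, $\partial_{N_t}\bar h$, $A(\nu_t, \nu_t)$, each of order $t^{-1}$, contribute only $O(t)$ or $O(t^2)$ remainders, as indicated in Remarks \ref{explain 1} and \ref{explain 2}.

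Reading off the $O(1)$ terms in \eqref{shear expansion} and \eqref{angle deficit variation}, the linearization at the base point is
\[
D_u\Phi(0, 0)(v) = \Bigl(-\Delta_0 v - \tfrac{1}{|\Sigma|}\int_\Sigma(-\Delta_0 v),\; -\partial_{\nu_0} v\Bigr),
\]
with $\Delta_0$ and $\nu_0$ taken on the limit disk $(\Sigma, a^2 g_{S^2})$. Standard elliptic theory for the Neumann Laplacian, exactly as in the isomorphism step of the proof of Theorem \ref{foliation near infinitesimal}, shows that the restriction of $D_u\Phi(0, 0)$ to $\mathcal{Y}$ is a Banach space isomorphism onto $\mathcal{Z} \times C^{1, \alpha}(\partial\Sigma)$; the mean-zero normalization built into $\mathcal{Y}$ absorbs the constant kernel of the Neumann Laplacian. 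The implicit function theorem then supplies a family $u_t \in B(0, \delta) \cap \mathcal{Y}$ with $u_0 = 0$ and $\Phi(t, u_t) = (0, 0)$ for all $t \in [0, \varepsilon)$ with $\varepsilon$ sufficiently small. By construction each surface $\Sigma_{t, t^2 u_t}$ has constant $H - \bar h = \lambda_t$ and meets $\partial_s M$ at the prescribed angle $\bar\gamma_{t, t^2 u_t}$.

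For the barrier sign $\lambda_t \leqslant 0$, observe first that the isometric tangent cone assumption, together with $u_0 = 0$, forces $\lim_{t \to 0^+} \lambda_t = 0$, since $H_t$ and $\bar h_t$ share the same $2/t$ leading asymptotics. To propagate this to $\lambda_t \leqslant 0$ for small $t > 0$, I would replay the argument of Theorem \ref{H ode} on the constant-$(H - \bar h)$ foliation $\{\Sigma_{t, t^2 u_t}\}$: multiplying the Jacobi-type equation for $H - \bar h$ by the reciprocal of the lapse $v_t$, integrating, and then invoking the interior bound \eqref{interior lower bound simplify}, the boundary rewrite \eqref{rs rewrite}, the comparison $H_{\partial M} \geqslant \bar H_{\partial M}$, and the crucial estimate of Lemma \ref{lm:key in lower bound} yields a differential inequality of the same shape as \eqref{original ode}, namely $\lambda'(t) - \Psi(t)\lambda(t) \leqslant 0$. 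Combined with the boundary behavior $\lambda_t \to 0$ as $t \to 0^+$, this forces $\lambda_t \leqslant 0$ on $[0, \varepsilon)$.

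The main technical obstacle is the bookkeeping at the cone tip: all of $|A_t|$, $\nabla_{N_t}\bar h$ and $A(\nu_t, \nu_t)$ blow up like $t^{-1}$, so one must verify that after the $t^2 u$ and $t^{-1}$ rescalings every quantity entering the linearization extends continuously to $t = 0$ with the limiting operator being a smooth Neumann Laplacian on the fixed disk $\Sigma$. The isometric tangent cone hypothesis is doing all the work here, since it provides a single well-defined limit $(\Sigma, a^2 g_{S^2})$ simultaneously for the intrinsic geometry induced by $g$ and for the background $\bar g$, so that the constant-$\lambda$ and prescribed-angle equations can be matched as $t \to 0^+$.
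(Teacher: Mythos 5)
The foliation‐construction half of your argument (rescaling, reading off the limiting Neumann problem, applying the implicit function theorem on $\mathcal{Y}$) is essentially what the paper does, so that part is fine. The genuine gap is in how you establish the sign of $\lambda_t$. You assert that ``the isometric tangent cone assumption, together with $u_0=0$, forces $\lim_{t\to 0^+}\lambda_t = 0$, since $H_t$ and $\bar h_t$ share the same $2/t$ leading asymptotics.'' Matching leading $2/t$ asymptotics only yields $t\lambda_t\to 0$, i.e.\ $\lambda_t = o(1/t)$; it does not control the $O(1)$ subleading terms, and those need not cancel. For instance, if $g$ near the tip behaves like $\mathrm{d}t^2+(t+ct^2)^2 g_{S^2}$ and $\bar g$ like $\mathrm{d}t^2+t^2 g_{S^2}$, the two have isometric tangent cones but the $t$-level mean curvatures differ by $2c+O(t)$. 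So $\lim\lambda_t$ is not a priori zero, and your stated step has no justification.

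What the paper actually does at this point is a substantive geometric argument that you have skipped entirely: it rescales $(t^{-1}\Omega_t, t^{-2}g)$ and $(t^{-1}\Omega_t, t^{-2}\bar g)$ to two metrics $g_1, g_2$ on the limit cone $\Lambda$, rewrites $-\lambda_t |D|_{g_1}$ via the averaged identity \eqref{averaged difference}, and then invokes the difference Schl\"afli formula (Corollary \ref{difference schlaffli}) together with the scalar curvature comparison $R_g\geq R_{\bar g}$, the boundary mean curvature comparison $H_{\partial M}\geq\bar H_{\partial M}$, and — crucially — the \emph{non-negative Ricci curvature of the limit cone} (hypothesis of item (\ref{item metric cone})), which controls the sign of the $\langle\mathrm{Ric}_{\hat g}, g_2-g_1\rangle$ bulk term. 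This is what delivers $\lim_{t\to 0}\lambda_t\leq 0$, and it is the reason Lemma \ref{differential schlaffli} and Corollary \ref{difference schlaffli} appear in the paper at all. Without this you have no control on $\lim\lambda_t$, and your invocation of the ODE from Theorem \ref{H ode} has nothing to anchor it. (One can in fact get by with mere boundedness of $\lambda_t$ near $t=0$ once the $\Psi\sim -3/t$ asymptotics of \eqref{Psi} are in hand, but you should then argue boundedness explicitly and rework the ODE step — as written, your reasoning leans on an unjustified limit.)
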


Before proving this proposition, we give a variational lemma.

\begin{lemma}
  \label{differential schlaffli}Suppose that $(\Omega, \hat{g})$ is a compact
  manifold with piecewise smooth boundary $\partial \Omega$ and $\Sigma$ is a
  relatively open, smooth subset of $\partial \Omega$. Let $g_s$ be a smooth
  family of metrics indexed by $s \in [0, \varepsilon)$ such that $g_s \to
  \hat{g}$ as $s \to 0$, let $h_s = g_s - \hat{g}$. We now omit the subscript
  on $h_s$. Let $\nu$ be the unit outward normal of $\partial \Omega$ in
  $(\Omega, g)$, $H_g$ and $A_g$ be the mean curvatures and the second
  fundamental form of $\partial \Omega$ in $(\Omega, g)$ computed with respect
  to the unit normal pointing outward of $\Omega$, and $\gamma$ be the
  dihedral angles formed by $\Sigma$ and $\partial \Omega \backslash \Sigma$
  with respect to the metric $g$. We put a hat at appropriate places for the
  geometric quantities with respect to $\hat{g}$.
  
  Then
\begin{align}
& 2 \left[ - \int_{\Sigma} (H_g - H_{\hat{g}}) + \int_{\partial \Sigma}
\tfrac{1}{\sin \gamma_{\hat{g}}} (\cos \gamma_{\hat{g}} - \cos \gamma_g)
\right] \\
= & \int_{\Omega} ((R_g - R_{\hat{g}}) + \langle
\ensuremath{\operatorname{Ric}}_{\hat{g}}, h \rangle_{\hat{g}}) + 2
\int_{\partial \Omega \backslash \Sigma} (H_g - H_{\hat{g}}) +
\int_{\partial \Omega} \langle h, A_{\hat{g}} \rangle + O (s^2) .
\end{align}
  Here, we have used $O (s^2)$ to denote a remainder term comparable to
  $|h|_{\hat{g}}^2 + |h|_{\hat{g}} | \hat{\nabla} h|_{\hat{g}} + |
  \hat{\nabla} h|_{\hat{g}}^2$.
\end{lemma}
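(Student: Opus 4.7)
The plan is to treat this as a differential Schläfli-type identity, obtained by integrating the linearization of scalar curvature over $\Omega$ and repeatedly applying the divergence theorem. Starting from the standard pointwise formula
\begin{equation}
R_g - R_{\hat g} = -\Delta_{\hat g} V + \hat\nabla^i \hat\nabla^j h_{ij} - \langle \mathrm{Ric}_{\hat g}, h\rangle_{\hat g} + O(s^2),
\end{equation}
where $V = \mathrm{tr}_{\hat g} h$, I would move the Ricci term to the left-hand side and integrate over $\Omega$. The divergence theorem then produces
\begin{equation}
\int_\Omega \bigl((R_g - R_{\hat g}) + \langle \mathrm{Ric}_{\hat g}, h\rangle_{\hat g}\bigr) = \int_{\partial \Omega} \bigl[(\mathrm{div}_{\hat g} h)(\hat\nu) - \hat\nu(V)\bigr] + O(s^2),
\end{equation}
where $\hat\nu$ is the outward $\hat g$-unit normal to each smooth piece of $\partial \Omega$.

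Next, I would apply the well-known linearization of the mean curvature under a variation of the ambient metric: on each smooth piece of $\partial \Omega$,
\begin{equation}
2(H_g - H_{\hat g}) = \hat\nu(V) - (\mathrm{div}_{\hat g} h)(\hat\nu) - \mathrm{div}_{\partial \Omega}\bigl((h(\hat\nu,\cdot))^{\top}\bigr) - \langle A_{\hat g}, h\rangle + O(s^2).
\end{equation}
Substituting this formula into the boundary integral above converts the normal-derivative terms into a multiple of $H_g - H_{\hat g}$ plus a $\langle A_{\hat g}, h\rangle$ contribution and a leftover tangential divergence:
\begin{equation}
\int_\Omega \bigl((R_g - R_{\hat g}) + \langle \mathrm{Ric}_{\hat g}, h\rangle_{\hat g}\bigr) = -2\int_{\partial \Omega}(H_g - H_{\hat g}) - \int_{\partial \Omega}\langle A_{\hat g}, h\rangle - \int_{\partial \Omega}\mathrm{div}_{\partial \Omega}\bigl((h(\hat\nu,\cdot))^{\top}\bigr) + O(s^2).
\end{equation}

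Since $\partial \Omega = \Sigma \cup (\partial \Omega \setminus \Sigma)$ is only piecewise smooth, integrating the tangential divergence on each smooth piece via Stokes' theorem produces a line integral along the edge $\partial \Sigma$ with a conormal contribution from each side. I would then identify these edge contributions with the linearization of the dihedral angle, showing
\begin{equation}
-\int_{\partial \Omega}\mathrm{div}_{\partial \Omega}\bigl((h(\hat\nu,\cdot))^{\top}\bigr) = 2\int_{\partial \Sigma}\tfrac{1}{\sin \gamma_{\hat g}}(\cos \gamma_{\hat g} - \cos \gamma_g) + O(s^2).
\end{equation}
Splitting $\int_{\partial \Omega} = \int_\Sigma + \int_{\partial \Omega \setminus \Sigma}$ and rearranging yields the claimed identity. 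The main obstacle is this last step: verifying that the two conormal contributions at $\partial \Sigma$ combine with the correct prefactor $1/\sin \gamma_{\hat g}$ to reproduce the linearized dihedral angle. This reduces to a pointwise computation at the edge which can be carried out in an $\hat g$-orthonormal frame with one axis along $\partial \Sigma$ and the other two spanning its normal plane, using $\cos \gamma_g = g(\nu_{\Sigma,g}, \nu_{\partial \Omega \setminus \Sigma, g})$ and the constraint that each $\nu$ remains unit length along the variation.
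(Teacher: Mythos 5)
Your proposal follows essentially the same route as the paper: linearize $R_g$ at $\hat g$, integrate over $\Omega$ and apply the divergence theorem, substitute the linearization of $H$ on $\partial\Omega$, split $\partial\Omega = \Sigma \cup (\partial\Omega\setminus\Sigma)$, and convert the leftover tangential divergence of $Y = (h(\hat\nu,\cdot))^\top$ into an edge integral along $\partial\Sigma$ that matches the linearized dihedral angle. The only difference is that the paper outsources the edge identity $\int_{\partial\Omega}\operatorname{div}_{\hat\sigma} Y = \pm 2\int_{\partial\Sigma}\tfrac{1}{\sin\hat\gamma}(\cos\hat\gamma-\cos\gamma)+O(s^2)$ to a citation (Miao's formula (3.18) and Brendle--Marques Prop. 4 for the other two linearizations), whereas you propose to rederive it directly from Stokes on the two faces plus a frame computation at the corner; both treatments of that step are equally valid, and your acknowledged ``main obstacle'' is precisely the part the paper does not write out either.
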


\begin{proof}
  From the variational formulas of the scalar curvature and the mean
  curvature, we have
  \begin{equation}
    R_g - R_{\hat{g}} = - \langle \ensuremath{\operatorname{Ric}}_{\hat{g}}, h
    \rangle_{\hat{g}} -\ensuremath{\operatorname{div}}_{\hat{g}} (\mathrm{d}
    (\ensuremath{\operatorname{tr}}_{\hat{g}} h)
    -\ensuremath{\operatorname{div}}_{\hat{g}} h) + O (s^2), \label{general
    scalar variation}
  \end{equation}
  and
  \begin{equation}
    2 (H_g - H_{\hat{g}}) = (\mathrm{d}
    (\ensuremath{\operatorname{tr}}_{\hat{g}} h)
    -\ensuremath{\operatorname{div}}_{\hat{g}} h) (\hat{\nu})
    -\ensuremath{\operatorname{div}}_{\sigma} Y - \langle h, A_{\hat{g}}
    \rangle_{\sigma} + O (s^2) \label{general mean variation}
  \end{equation}
  where $Y$ is the tangential component dual to the 1-form $h (\cdot,
  \hat{\nu})$. For the explicit form of the remainder terms, refer to
  {\cite[Proposition 4]{brendle-scalar-2011}} and {\cite{miao-mass-2021}}.
  
  We integrate the variation of the mean curvature \eqref{general mean
  variation} on the boundary $\partial \Omega$ with respect to the metric
  $\hat{g}$, we see
  \[ \int_{\partial \Omega} [(\mathrm{d}
     (\ensuremath{\operatorname{tr}}_{\hat{g}} h)
     -\ensuremath{\operatorname{div}}_{\hat{g}} h) (\hat{\nu})
     -\ensuremath{\operatorname{div}}_{\hat{\sigma}} Y - \langle h,
     A_{\hat{g}} \rangle] = 2 \int_{\partial \Omega} (H_g - H_{\hat{g}}) + O
     (s^2) . \]
  By the divergence theorem and the variation of the scalar curvature,
  \[ \int_{\partial \Omega} (\mathrm{d}
     (\ensuremath{\operatorname{tr}}_{\hat{g}} h)
     -\ensuremath{\operatorname{div}}_{\hat{g}} h) (\hat{g}) = \int_{\Omega}
     [- (R_g - R_{\hat{g}}) - \langle
     \ensuremath{\operatorname{Ric}}_{\hat{g}}, h \rangle_{\hat{g}}] + O (s^2)
     . \]
  For the term $\int_{\partial \Omega}
  \ensuremath{\operatorname{div}}_{\hat{\sigma}} Y$, we follow
  {\cite[(3.18)]{miao-mass-2021}} and obtain
  \[ \int_{\partial \Omega} \ensuremath{\operatorname{div}}_{\hat{\sigma}} Y =
     \int_{\Sigma} \ensuremath{\operatorname{div}}_{\hat{g}} Y +
     \int_{\partial \Omega \backslash \Sigma}
     \ensuremath{\operatorname{div}}_{\hat{\sigma}} Y = 2 \int_{\partial
     \Sigma} \tfrac{1}{\sin \hat{\gamma}} (\cos \hat{\gamma} - \cos \gamma) +
     O (s^2) . \]
  Collecting all the formulas in the proof proves the lemma.
\end{proof}

Lemma \ref{difference schlaffli} implies the following by taking the
difference of two families of metrics.

\begin{corollary}
  \label{difference schlaffli}Assume $(\Omega, \hat{g})$ is the manifold from
  Lemma \ref{difference schlaffli}, for two family of metrics $\{g_i \}_{i =
  1, 2}$ close to $\hat{g}$ indexed both by a small parameter $s$, we have
\begin{align}
& 2 \left[ - \int_{\Sigma} (H_{g_2} - H_{g_1}) + \int_{\partial \Sigma}
\tfrac{1}{\sin \hat{\gamma}} (\cos \gamma_{g_1} - \cos \gamma_{g_2})
\right] \\
= & \int_{\Omega} ((R_{g_2} - R_{g_1}) + \langle
\ensuremath{\operatorname{Ric}}_{\hat{g}}, g_2 - g_1 \rangle_{\hat{g}}) +
2 \int_{\partial \Omega \backslash \Sigma} (H_{g_2} - H_{g_1}) +
\int_{\partial \Omega} \langle g_2 - g_1, A_{\hat{g}} \rangle + O (s^2) .
\end{align}
\end{corollary}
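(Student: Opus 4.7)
The plan is to derive this corollary directly from Lemma \ref{differential schlaffli} by applying that lemma twice, once with each of the two families $g_i$ ($i = 1, 2$) as the perturbed metric over the common reference $\hat{g}$, and then subtracting the two resulting identities. Setting $h_i = g_i - \hat{g}$, each application of Lemma \ref{differential schlaffli} produces
\begin{equation}
2 \Bigl[ -\int_{\Sigma}(H_{g_i} - H_{\hat{g}}) + \int_{\partial \Sigma} \tfrac{1}{\sin \hat{\gamma}}(\cos \hat{\gamma} - \cos \gamma_{g_i}) \Bigr] = \mathcal{R}_i + O(s^2),
\end{equation}
where
\begin{equation}
\mathcal{R}_i = \int_{\Omega}\bigl((R_{g_i} - R_{\hat{g}}) + \langle \operatorname{Ric}_{\hat{g}}, h_i \rangle_{\hat{g}}\bigr) + 2 \int_{\partial \Omega \setminus \Sigma}(H_{g_i} - H_{\hat{g}}) + \int_{\partial \Omega} \langle h_i, A_{\hat{g}} \rangle,
\end{equation}
and each $O(s^2)$ error is pointwise controlled by $|h_i|_{\hat{g}}^{2} + |h_i|_{\hat{g}} |\hat{\nabla} h_i|_{\hat{g}} + |\hat{\nabla} h_i|_{\hat{g}}^{2}$ as recorded in the statement of the lemma.

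Next I would subtract the $i = 1$ identity from the $i = 2$ identity. Every quantity computed purely from $\hat{g}$ (that is, $H_{\hat{g}}$, $\cos \hat{\gamma}$, and $R_{\hat{g}}$) cancels in the subtraction. The Ricci contribution is linear in $h$ and so combines into $\langle \operatorname{Ric}_{\hat{g}}, h_2 - h_1 \rangle_{\hat{g}} = \langle \operatorname{Ric}_{\hat{g}}, g_2 - g_1 \rangle_{\hat{g}}$, and likewise the $A_{\hat{g}}$-term combines into $\langle g_2 - g_1, A_{\hat{g}} \rangle$. On the left, the $\Sigma$-integral becomes $-\int_{\Sigma}(H_{g_2} - H_{g_1})$, while the $\partial \Sigma$-integral reorganizes, by a sign flip, into $\int_{\partial \Sigma} \tfrac{1}{\sin \hat{\gamma}}(\cos \gamma_{g_1} - \cos \gamma_{g_2})$. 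Collecting everything recovers exactly the identity claimed in the corollary.

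I do not anticipate any real obstacle: the argument is essentially formal linearity. The only bookkeeping issue is that each of the two $O(s^2)$ remainders from Lemma \ref{differential schlaffli} involves $h_i$ individually rather than $h_2 - h_1$, but since both $|h_i|_{\hat{g}}$ and $|\hat{\nabla} h_i|_{\hat{g}}$ are $O(s)$ by hypothesis, each remainder is $O(s^2)$, and so is their difference. Thus the corollary reduces to a few lines of linear algebra and integration by parts that have already been absorbed into the preceding lemma.
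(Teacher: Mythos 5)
Your proposal is correct and matches the paper's own argument: the paper derives the corollary exactly by applying Lemma \ref{differential schlaffli} to each family $g_i$ with the common reference $\hat g$ and subtracting, with the $\hat g$-only terms canceling and the linear-in-$h$ terms combining via $h_2 - h_1 = g_2 - g_1$. Your remark on the remainder terms (each $O(s^2)$ individually, hence so is the difference) is the only bookkeeping point worth noting, and you handle it correctly.
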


Now we are ready to prove Proposition \ref{foliation construction}.

\begin{proof}[Proof of Proposition \ref{foliation construction}]
  The proof is similar to {\cite{chai-scalar-2023-arxiv}}. We bring up only
  the main differences.
  
  Because the right hand of both \eqref{shear expansion} and \eqref{angle
  deficit variation} converge to $\Delta u$ and $\tfrac{\partial u}{\partial
  \nu}$ (up to a constant) respectively, so we can first follow
  {\cite[Proposition 4.2]{chai-scalar-2023-arxiv}} to construct a foliation
  $\{\Sigma_{t, t^2 u} \}_{t \in [0, \varepsilon)}$ near $p_0$ with constant
  $H - \bar{h}$ and $\gamma_{t, t^2 u} = \bar{\gamma}_{t, t^2 u}$ along
  $\partial \Sigma_{t, t^2 u}$, and then {\cite[Lemma
  4.3]{chai-scalar-2023-arxiv}} to obtain that
  \begin{equation}
    - \lambda_t | \Sigma_t | = \int_{\Sigma_t} (H_t - \bar{h}_t) +
    \int_{\partial \Sigma_t} \tfrac{1}{\sin \gamma_t} (\cos \bar{\gamma}_t -
    \cos \gamma_t) + O (t^3) . \label{averaged difference}
  \end{equation}
  Now we show that $\lim_{t \to 0} \lambda_t \leqslant 0$.
  
  We consider the rescaled set $t^{- 1} \Omega_t$ with two rescaled metrics
  $t^{- 2} g$ and $t^{- 2} \bar{g}$. Since $\bar{g} = \mathrm{d} t^2 + \psi
  (t)^2 g_{{S}^2}$ and $\psi (t) = a t + o (t)$, it is easy to see that
  $(t^{- 1} \Omega_t, t^{- 2} \bar{g})$ converges to a truncated metric cone
  $\Lambda = (0, 1] \times D$ with the metric $\varrho := \mathrm{d} s^2 + a^2
  s^2 g_{{S}^2}$ where $s \in (0, 1]$ and $(D, a^2 g_{{S}^2})$
  is some convex disk in a 2-sphere $({S}^2, a^2 g_{{S}^2})$.
We set $D_s = \{s\} \times D$. Since
  $g$ and $\bar{g}$ has isometric tangent cone at $p_0$, $(t^{- 1} \Omega_t,
  t^{- 2} g)$ converges to $(\Lambda, \varrho)$ as well. Therefore, we can
  view $g_1 = t^{- 2} g$ and $g_2 = t^{- 2} \bar{g}$ (indexed by $t$) as two
  metrics on $\Lambda$ getting closer to $\varrho$ as $t \to 0$. We rescale
  \eqref{averaged difference} by a factor of $t^{- 2}$, we obtain
  \[ - \lambda_t | \Sigma_t | t^{- 2} = \int_{\Sigma_t} (H_t - \bar{h}_t) t^{-
     2} + \int_{\partial \Sigma_t} \tfrac{1}{\sin \gamma_t} (\cos
     \bar{\gamma}_t - \cos \gamma_t) t^{- 2} + O (t) \]
  which is equivalent to
  \[ - \lambda_t |D|_{g_1} = \int_D (H_{g_2} - H_{g_1}) + \int_{\partial D}
     \tfrac{1}{\sin \gamma_t} (\cos \bar{\gamma}_t - \cos \gamma_t) + O (t) .
  \]
  In the above the integration done is with respect to the metric $g_1$ and
  $H_{g_i}$ are the mean curvature of $\{1\} \times D$ in $(\Lambda, g_i)$
  computed with respect to the normal pointing inside of $\Lambda$.
  
  All the comparisons in item (\ref{item metric cone}) of Theorem \ref{rigidity depending on structures} carry over to the rescaled
  metrics $g_1$ and $g_2$ on $\Lambda$, and that $(\Lambda,
  \varrho)$ has non-negative Ricci curvature by the assumptions of item (\ref{item metric cone}) of Theorem \ref{rigidity depending on structures}. We use Corollary
  \ref{difference schlaffli} and arrive that $\lambda_t \leqslant O (t)$, that
  is,
  \[ \lim_{t \to 0} \lambda (t) \leqslant 0. \]
  Since $\lambda_t$ satisfies the differential inequality \eqref{original ode}
  and considering the asymptotics $u (\cdot, t) = 1 + O (t)$, $\cot
  \bar{\gamma} = O (t)$ and $\bar{h} =  2/t  + O (1)$ in
  \eqref{Psi}, we see that $\lambda_t \leqslant 0$ for all $t \in (0,
  \varepsilon)$.
\end{proof}

\begin{remark}
  The Ricci curvature in Corollary \ref{difference schlaffli} blows up near
  $\{0\} \times D$, however, because we are integrating with respect to the
  metric $\varrho$, the volume near $\{0\} \times D$ is small. Also, the
  difference $g_2 - g_1$ is small. So the blowing up of the Ricci curvature
  will not cause an issue.
\end{remark}

\subsection{Barrier construction with non-isometric tangent
cones}\label{strict barrier}

Since $\bar{g} = \mathrm{d} t^2 + \psi (t)^2 g_{{S}^2}$, the manifold
$(M, \bar{g})$ is topologically a cone near $t = 0$ and it is a point at $t =
0$. According to the assumptions of item (\ref{item metric cone}) of Theorem \ref{rigidity depending on structures}, $(M, g)$ at $p_0$ also
locally resembles a cone, that is,
\begin{equation}
  g = \mathrm{d} s^2 + s^2 g_0 + g_1, \label{metric expansion of g}
\end{equation}
where $s$ is a parameter, $g_0$ is a metric on a two dimensional disk $D$ and
$g_1$ is small compare to $\mathrm{d} s^2 + s^2 g_0$. In other words, the
tangent cone at $p_0$ is a cone with the metric $\mathrm{d} s^2 + s^2 g_0$.

Now we can also identify $M$ near $p_0$ as $(0, \varepsilon) \times D$ and $t$
as a function on $(0, \varepsilon) \times D$. Let $(s, x) \in (0, \varepsilon)
\times D$, we see that $\tau : = s / t$ as a function on $M$ only depends on
$x \in D$. So we view $\tau$ as a function on $D$. Since $g \geqslant \bar{g}$
on $M$, we have that $\tau (x) \geqslant 1$. Now we discuss the case that
$\tau (x) \equiv 1$ on $D$.

\begin{lemma}
  \label{tau 1 imply isometric cone}If $\tau \equiv 1$ on $D$, then $g_0 = a^2
  g_{{S}^2}$. That is, $(M, g)$ and $(M, \bar{g})$ have isometric
  tangent cones at $p_0$.
\end{lemma}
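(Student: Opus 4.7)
The plan is to interpret $\tau\equiv 1$ as the agreement of the $g$- and $\bar g$-distance functions from $p_0$ near $p_0$, then pass the comparison $g\geqslant\bar g$ to the tangent cones to obtain one direction of the equality, and finally exploit the rigidity of the shared cone structure to upgrade this to equality.

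Since $s$ is the $g$-radial coordinate in the cone expansion $g=ds^2+s^2g_0+g_1$ (playing the role of the $g$-distance from $p_0$) and $t$ is the $\bar g$-warped-product parameter (equal to the $\bar g$-distance from $p_0$), the identity $\tau(x)=s/t=1$ forces $s=t$ as functions on $M$ near $p_0$ and hence $d_g(p_0,\cdot)=d_{\bar g}(p_0,\cdot)$. As a consequence, the $g$- and $\bar g$-minimizing radial geodesics from $p_0$ coincide as unparametrized curves and have equal length in both metrics, so $(g-\bar g)(\dot\gamma,\dot\gamma)=0$ along each such radial geodesic $\gamma$. Cauchy--Schwarz for the PSD form $g-\bar g$ then yields $(g-\bar g)(\partial_t,\cdot)\equiv 0$, so in the warped-product coordinates $(t,y)$ of $\bar g$ one may write $g=dt^2+h_g(t,y)$ and $\bar g=dt^2+\psi(t)^2 g_{S^2}(y)$, whence the full comparison reduces to the tangential inequality $h_g\geqslant\psi(t)^2 g_{S^2}$. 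Rescaling by $t^{-2}$ and sending $t\to 0^+$ then yields the forward inequality $g_0\geqslant a^2 g_{S^2}$ on $D$, under the canonical angular identification supplied by the common radial rays.

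For the reverse inequality, I would use the identity map $(M,g)\to(M,\bar g)$, which induces a surjective $1$-Lipschitz map $\Phi\colon C_g\to C_{\bar g}$ between the two metric tangent cones at $p_0$. The condition $\tau\equiv 1$ ensures $\Phi$ preserves the distance from the tip, and combined with the non-negative Ricci curvature of the tangent cone of $\bar g$ coming from item (\ref{item metric cone}) and the metric tangent cone hypothesis on $(M,g)$, a Bishop--Gromov-type volume comparison in the limit should force $\Phi$ to preserve volume and hence to be an isometry, giving $g_0=a^2 g_{S^2}$. The main obstacle will be this last step: the pointwise metric comparison alone yields only the forward inequality, and promoting it to equality requires essential use of the curvature rigidity of the shared cone structure, since generic surjective $1$-Lipschitz maps between metric cones that preserve distance from the tip need not be isometries.
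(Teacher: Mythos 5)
Your derivation of the forward inequality $g_0\geqslant a^2 g_{S^2}$ is sound and is essentially what the paper does, though the paper reaches it more directly by simply rescaling and observing that $\chi_2 = \mathrm{d}t^2 + t^2 g_0 \geqslant \chi_1 = \mathrm{d}t^2 + a^2 t^2 g_{\mathbb{S}^2}$ forces $g_0\geqslant a^2 g_{\mathbb{S}^2}$ once the radial coordinates are identified. The genuine gap is in the equality step, and you have correctly flagged it as the obstacle, but the tool you reach for is the wrong one.

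The Bishop--Gromov route cannot close the argument. A $1$-Lipschitz surjection $\Phi\colon C_g\to C_{\bar g}$ is volume non-increasing, so it only gives $\operatorname{Vol}(C_g)\geqslant\operatorname{Vol}(C_{\bar g})$, which you already know from $g_0\geqslant a^2 g_{\mathbb{S}^2}$; there is nothing to saturate. Non-negative Ricci of $C_{\bar g}$ is a hypothesis on the \emph{target} cone and says nothing about the volume of the \emph{source} cone $C_g$, for which no Ricci bound is assumed in item~(3). More fundamentally, your proposal never uses the scalar curvature comparison $R_g\geqslant R_{\bar g}$ or the boundary mean curvature comparison $H_{\partial_s M}\geqslant\bar H_{\partial_s M}$, and without these the claim is simply false: one can take $g_0$ strictly larger than $a^2 g_{\mathbb{S}^2}$ on $D$ and still have $\tau\equiv 1$. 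The paper instead passes both comparisons to the rescaled cones, translates $R_{\chi_2}\geqslant R_{\chi_1}$ into a Gauss curvature comparison $K_2\geqslant K_1 = a^{-2}$ on the cross-section $(D_1,g_0)$ versus $(D_1, a^2 g_{\mathbb{S}^2})$, translates the boundary mean curvature comparison into a geodesic curvature comparison $\kappa_2\geqslant\kappa_1$ on $\partial D_1$ (using that the extra second-fundamental-form terms in the rewrite \eqref{rs rewrite} agree for a cone), and then invokes Gauss--Bonnet on the disk $D_1$: the inequalities $g_0\geqslant a^2 g_{\mathbb{S}^2}$, $K_2\geqslant K_1>0$, $\kappa_2\geqslant\kappa_1$ together with $\int_{D_1}K\,\mathrm{d}\sigma+\int_{\partial D_1}\kappa\,\mathrm{d}\lambda=2\pi$ for both metrics force all inequalities to be equalities, hence $g_0\equiv a^2 g_{\mathbb{S}^2}$. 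Replacing your volume-comparison step by this Gauss--Bonnet argument would repair the proof.
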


\begin{proof}
  Since $\tau \equiv 1$, so we can rescale $(M, \bar{g})$ and $(M, g)$ by the
  same scale to obtain a cone $\mathcal{C}= (0, \infty) \times D$ but with two
  different metrics $\chi_1 = \mathrm{d} t^2 + a^2 t^2 g_{{S}^2}$ and
  $\chi_2 = \mathrm{d} t^2 + t^2 g_0$. For $s > 0$, set $D_s = \{s\} \times D
  \subset \mathcal{C}$. Since the metric comparison, the mean curvature and
  the scalar curvature comparison are preserved by rescaling, so $g_0
  \geqslant a^2 g_{{S}^2}$, the scalar curvature $R_{\chi_2} \geqslant
  R_{\chi_1}$ and the mean curvature of $\partial \mathcal{C}$ at $\partial
  D_1$ satisfies $H_{\chi_2} \geqslant H_{\chi_1}$.
  
  Since both $\chi_i$, $i = 1, 2$ are warped product metrics, the comparison
  $R_{\chi_2} \geqslant R_{\chi_1}$ reduces to Gaussian curvature comparison
  $K_2 \geqslant K_1 = a^{- 2}$ of $(D_1, g_0)$ and $(D_1, a^2
  g_{{S}^2})$ by a direct computation of scalar curvature (or Gauss
  equation). Let $\kappa_i$ be the geodesic curvatures of $\partial D_1$ with
  respect to $\chi_i |_{D_1}$. By direct calculation, the second fundamental
  form $A^{(i)}_{\partial \mathcal{C}}$ of $\partial \mathcal{C}$ in the
  direction $\partial_t$ vanishes with respect to both metrics $\chi_i$ and
  the second fundamental form $A_{D_1}^{(i)}$ of $D_1$ in $\mathcal{C}$ with
  respect to $\chi_i$ agree. It then follows from $H_{\chi_2} \geqslant
  H_{\chi_1}$ and \eqref{rs rewrite} that $\kappa_2 \geqslant \kappa_1$.
  
  To summarize, we have comparisons on $D_1$ that $g_0 \geqslant a^2
  g_{{S}^2}$, $K_2 \geqslant K_1$ and $\kappa_2 \geqslant \kappa_1$
  along $\partial D_1$. By Gauss-Bonnet theorem, $g_0 \equiv a^2
  g_{{S}^2}$ on $D_1$ and it follows that $\chi_1 \equiv \chi_2$.
  Therefore, $(M, g)$ and $(M, \bar{g})$ have isometric tangent cones at
  $p_0$.
\end{proof}

By the above lemma, the case $\tau \equiv 1$ is the case which implies
isometric tangent cones of $(M, g)$ and $(M, \bar{g})$ at $p_0$. This is the
case we have already addressed in Subsection \ref{foliation}. Without loss of
generality, we assume that $\tau \mathrel{\not\equiv} 1$.

We first consider the difference of $H - \bar{h}$ of the perturbation for
$D_s$. We now represent $\bar{h}$ at $D_s$ and its value at the graphical
perturbations of $D_s$ by $\zeta$ to avoid notational confusion. By the first
variation of the mean curvatures,
\begin{align}
& (H_{s, s^2 u} - \zeta_{s, s^2 u}) - (H_s - \zeta_s) \\
= & -\Delta_s u - s^2 (\ensuremath{\operatorname{Ric}}(N_s) + |A_s |^2 + s^{-
2} (\zeta_{s, s^2 u} - \zeta_s)) u + O (s),
\end{align}
where $\Delta_s$ is the Laplacian with respect to the metric $s^{- 2}
g|_{D_s}$.

\begin{remark}
  We have $\{(s^{- 1} D_s, s^{- 2} g|_{D_s})\}_{s > 0}$ converges to $(D,
  g_0)$ as $s \to 0$ by the metric \eqref{metric expansion of g} near $p_0$,
  and to indicate that the limit carries the metric $g_0$, we use $D_0$ instead
  of $D$ only.
\end{remark}

\begin{lemma}
  \label{favorable sign}We have that
  \[ s^2 (|A_s |^2 - s^{- 2} (\zeta_{s, s^2 u} - \zeta_s)) = (2 - 2 \tau) + O
     (s) . \]
\end{lemma}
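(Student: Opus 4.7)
My plan is to prove the lemma by separately expanding $|A_s|^2$ and $s^{-2}(\zeta_{s, s^2 u} - \zeta_s)$ to leading order as $s \to 0^+$, exploiting the cone expansion $g = \mathrm{d}s^2 + s^2 g_0 + g_1$ from \eqref{metric expansion of g} and the hypothesis $\psi(t) = at + o(t^2)$. The $O(s)$ remainder will absorb the perturbation $g_1$ and the higher-order part of $\psi$.

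For $|A_s|^2$, I work first in the model cone $\hat g = \mathrm{d}s^2 + s^2 g_0$. In this metric $D_s$ is totally umbilic, since $\nabla^{\hat g}_X \partial_s = (1/s) X$ for $X$ tangent to $D_s$; consequently the second fundamental form is $A_s = (1/s) \sigma_s$ where $\sigma_s = s^2 g_0$ is the induced metric on $D_s$, giving $|A_s|^2_{\hat g} = 2/s^2$. To control the correction from $g_1$, I pass to the rescaled metric $s^{-2} g$ on $s^{-1} \Omega_s$, as in Remark \ref{explain 1}: the rescaled cross-section $(s^{-1} D_s, s^{-2} g)$ converges to the cross-section of the model cone, the rescaled $|A|^2$ equals $s^2 |A_s|^2$, and the convergence yields $s^2 |A_s|^2 = 2 + O(s)$.

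For the $\zeta$-term, I use that $\tau = s/t$ depends only on $x \in D$ at leading order, so a point of $D_s$ with angular coordinate $x$ has $t$-coordinate $s/\tau(x) + O(s^2)$ along each radial line. Since $N_s \approx \partial_s$ under the cone structure of $g$, the graphical perturbation by $s^2 u N_s$ shifts the $s$-coordinate by $s^2 u + O(s^3)$ and hence shifts the $t$-coordinate by $s^2 u/\tau(x) + O(s^3)$. The assumption $\psi(t) = at + o(t^2)$ yields $\bar h(t) = 2/t + O(1)$ and $\bar h'(t) = -2/t^2 + O(1/t)$, so at $t = s/\tau$ one has $\bar h'(s/\tau) = -2\tau^2/s^2 + O(\tau/s)$. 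Taylor-expanding $\bar h$ then gives
\[
    \zeta_{s, s^2 u}(x) - \zeta_s(x) = \bar h'(s/\tau) \cdot (s^2 u/\tau) + O(s^2),
\]
whose leading $u$-coefficient is the constant $-2\tau$. Read through the first variation formula displayed just above the lemma, this contributes $s^2 \cdot s^{-2}(\zeta_{s, s^2 u} - \zeta_s) = 2\tau + O(s)$ once the sign convention built into that formula is accounted for.

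Combining the two expansions yields $s^2(|A_s|^2 - s^{-2}(\zeta_{s, s^2 u} - \zeta_s)) = 2 - 2\tau + O(s)$, as claimed. The main obstacle is the control of lower-order terms: both $|A_s|^2$ and $\nabla_{N_s} \bar h$ individually diverge like $1/s^2$, so any uncontrolled contribution from $g_1$ or from the $o(t^2)$-remainder in $\psi$ would pollute the $O(s)$ error. This is handled by the rescaling viewpoint of Remark \ref{explain 1}: in the rescaled metric $s^{-2} g$ on $s^{-1} \Omega_s$ every relevant geometric quantity is $O(1)$, and its convergence to the corresponding quantity on the model cone $(\Lambda, \varrho)$ is uniform, which supplies the $O(s)$ control.
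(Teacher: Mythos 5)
Your proposal is correct and follows essentially the same two-step route as the paper: rescale by $s^{-1}$ so that $(s^{-1}D_s, s^{-2}g|_{D_s})$ converges to the unit cross-section of the model cone, read off $s^2|A_s|^2 = 2 + O(s)$ from the umbilicity of that cross-section, and Taylor-expand $\bar h = 2/t + O(1)$ along the $t$-shift induced by the graphical perturbation, using $\tau = s/t$ to get the coefficient $-2\tau$ in the $u$-linearization of $\zeta_{s,s^2u} - \zeta_s$.

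One small remark in your favor: you consistently and correctly use $t = s/\tau$ (and hence a $t$-shift of $s^2u/\tau$ together with $\bar h'(s/\tau) = -2\tau^2/s^2 + O(\tau/s)$), whereas the paper's own proof briefly writes $t = s\tau$ and then compensates with a second slip in the algebra, so the two typos cancel. Your derivation of the coefficient $-2\tau$ is therefore actually cleaner than the printed one. You also implicitly resolve the $\pm$ mismatch between the lemma statement (which has a minus) and the paper's proof and the surrounding display (which have a plus); the plus is the version consistent with the definition of $f$ that follows, and your final expression $2 - 2\tau$ agrees with what the paper uses downstream.

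The only place I would tighten your write-up is the last paragraph: the tangent-cone hypothesis by itself guarantees $(s^{-1}\Omega_s, s^{-2}g) \to (\Lambda, \varrho)$ but not automatically at an $O(s)$ rate, so the clean $O(s)$ remainder requires the same quantitative reading of the cone expansion \eqref{metric expansion of g} that Remark \ref{explain 1} invokes; this is exactly the paper's own framing, so it is a shared convention rather than a gap in your argument.
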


\begin{proof}
  Since $\{(s^{- 1} \Lambda_s, s^{- 2} g)\}_{s > 0}$ converges to a truncated
  radial cone and $\{(s^{- 1} D_s, s^{- 2} g|_{D_s})\}_{s > 0}$ converges to
  the section of the radial cone with unit distance to $p_0$, so the section
  has second fundamental form $- 2$ and by rescaling,
  \[ |A_s |^2 = 2 s^{- 2} + O (s^{- 1}) \]
  as $s \to 0$.
  
  At a point $p = (s, x) \in D_s$, the value of $t$ is given by $t = s \tau
  (x)$ where $x$ is the projection of $p$ to the second coordinate. Since
  $\tau$ as a function on $M$ only depends on $x$, we see that the value of
  the function $t$ at the graphical perturbation $s + s^2 u$ of $D_s$ is given
  by $(s + s^2 u) \tau$. Since $\bar{h} (t) =  2 t^{- 1} + O (1)$, so
  \[ \zeta_{s, s^2 u} - \zeta_s =  \tfrac{2}{(s + s^2 u) \tau} - \tfrac{2}{s
     \tau} + O (1) = -\tfrac{2 \tau}{s^2} (s^2 u) + O (1) . \]
  Hence
  \[ s^2 (|A_s |^2 + s^{- 2} (\zeta_{s, s^2 u} - \zeta_s)) = (2 - 2 \tau) + O
     (s), \]
  which proves the lemma.
\end{proof}

Let $f = \lim_{s \to 0} s^2 (\ensuremath{\operatorname{Ric}}(N_s) + |A_s |^2 +
s^{- 2} (\zeta_{s, s^2 u} - \zeta_s))$ which is a function on the limit $D_0$,
so
\begin{equation}
  \lim_{s \to 0} [(H_{s, s^2 u} - \zeta_{s, s^2 u}) - (H_s - \zeta_s)] =
  -\Delta_0 u - f u, \label{mean curvature difference limit}
\end{equation}
where $\Delta_0$ is the Laplacian of $D_0$. Recall that
$\ensuremath{\operatorname{Ric}} (N_s) = O (s^{- 1})$, so
\[ f = 2 - 2 \tau \text{ on } D_0 . \]
Let $\alpha_s$ be the dihedral angles formed by $\partial M$ and $D_s$, and
$\alpha_{s, s^2 u}$ be the angles formed by $\partial M$ and the graphical
perturbation of $D_s$.

\begin{lemma}
  \label{angle to pi/2}The dihedral angles $\alpha_s$ formed by $\partial M$
  and $D_s$ approach $\pi / 2$ as $s \to 0$.
\end{lemma}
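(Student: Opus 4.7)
The plan is to use the scale invariance of angles together with the tangent cone structure of $(M,g)$ at $p_0$. Since the dihedral angle formed by two surfaces is a conformal (in particular scale) invariant of the ambient metric, $\alpha_s$ computed with respect to $g$ equals the angle computed with respect to any rescaled metric $s^{-2}g$. This lets me pass to the tangent cone limit as $s\to 0$.

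First I would rescale: set $\tilde g_s = s^{-2}g$ on the region $s^{-1}\Omega_s$. By the assumption \eqref{metric expansion of g} that $(M,g)$ has a tangent cone at $p_0$ with model $\chi = \mathrm{d} s^2 + s^2 g_0$, the rescaled manifolds $(s^{-1}\Omega_s, \tilde g_s)$ converge to the truncated cone $(\Lambda, \chi) := ((0,1]\times D, \mathrm{d} s^2+s^2 g_0)$, with $D_s$ rescaling to the cross-section $D_1 = \{1\}\times D$ and the rescaled portion of $\partial M$ converging to the radial part $(0,1]\times \partial D$ of $\partial \Lambda$. The smallness $g_1 = o(\mathrm{d} s^2+s^2g_0)$ in \eqref{metric expansion of g} also gives convergence of first derivatives on any neighborhood of $\partial D_1$ sitting away from the apex.

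Next I would identify the limiting angle. In $(\Lambda,\chi)$, the unit normal to $D_1$ is $\partial_s$, while the tangent space of the radial boundary $(0,1]\times \partial D$ at any point contains $\partial_s$. Consequently the outward unit normal $X_\infty$ of $\partial \Lambda \setminus D_1$ at any point of $\partial D_1$ lies inside $T(\partial D) \subset \{\partial_s\}^\perp$, so
\[
\cos \alpha_\infty = \chi(X_\infty, \partial_s) = 0,
\]
i.e.\ the limiting dihedral angle is exactly $\pi/2$.

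Finally, the angle $\alpha_s$ on $\partial D_s$ is measured on a set that, after the rescaling by $s^{-1}$, lies on $\partial D_1$ and stays away from the apex. Hence the $C^1$-convergence $\tilde g_s \to \chi$ on a neighborhood of $\partial D_1$ ensures continuous dependence of both $X_{\tilde g_s}$ and $N_{\tilde g_s}$ on the metric, and $\alpha_s \to \pi/2$. The only real obstacle is justifying enough regularity in the convergence $\tilde g_s \to \chi$ near $\partial D_1$ to take the angle to the limit; this is supplied by the expansion \eqref{metric expansion of g} applied in an annular shell around $p_0$ bounded away from the apex after rescaling.
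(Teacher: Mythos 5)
Your argument is correct and follows the same route as the paper: rescale by $s^{-1}$, use the tangent cone structure \eqref{metric expansion of g} so that $D_s$ converges to the unit cross-section $\{1\}\times D$ of the cone, and observe that this cross-section is orthogonal to the radial boundary of the cone, whence $\alpha_s\to \pi/2$. The paper's proof is a more condensed version of exactly this computation; your additional remarks on scale-invariance of angles and $C^1$-convergence away from the apex just make explicit what the paper leaves implicit.
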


\begin{proof}
  Since $\{(s^{- 1} \Lambda_s, s^{- 2} g)\}_{s > 0}$ converges to a truncated
  radial cone, $\{(s^{- 1} D_s, s^{- 2} g|_{D_s})\}_{s > 0}$ converges to the
  section of the radial cone with unit distance to $p_0$, and this section is
  orthogonal to the radial direction in the limit, \ so the intersection
  angles of $\partial M$ and $D_s$ approaches $\pi / 2$ as $s \to 0$.
\end{proof}

\begin{lemma}
  \label{boundary 2ff at eta}We have that $A_{\partial M} (\eta, \eta) = O
  (1)$.
\end{lemma}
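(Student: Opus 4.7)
The plan is to extract the boundedness of $A_{\partial M}(\eta, \eta)$ from the ruled (conical) structure of $\partial M$ near $p_0$, together with the fact that $\eta$ limits to the ruling direction. At the tangent cone level, the boundary $\partial\Lambda = (0,\infty)\times \partial D$ of the metric cone $(\Lambda, \varrho) = ((0,\infty)\times D, \mathrm{d}t^2 + t^2 g_0)$ is swept out by the radial geodesics from the vertex. Hence the second fundamental form of $\partial\Lambda$ in $\Lambda$ in the ruling direction vanishes identically: $A^\varrho_{\partial\Lambda}(\partial_t, \partial_t) \equiv 0$. By the previous Lemma \ref{angle to pi/2}, the angle between $D_s$ and $\partial M$ approaches $\pi/2$ as $s\to 0$, which after rescaling by the factor $s^{-1}$ means the vector $\tilde\eta := s\eta$ (of unit length with respect to $\tilde g_s := s^{-2}g$) converges to $\partial_t$ in the tangent cone.

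Quantitatively, I would use the transformation of the second fundamental form under the constant conformal rescaling $\tilde g_s = s^{-2}g$, namely $A^{\tilde g_s}(Y,Z) = s^{-1} A^g(Y,Z)$, which combined with $\tilde\eta = s\eta$ yields
\begin{equation}
A^g_{\partial M}(\eta, \eta) \;=\; s^{-1}\, A^{\tilde g_s}_{\partial M}(\tilde\eta, \tilde\eta).
\end{equation}
The naive scaling thus only gives $A^g_{\partial M}(\eta,\eta) = O(s^{-1})$; the improvement to $O(1)$ comes precisely from the vanishing at the cone limit. From the metric expansion $g = \mathrm{d}s^2 + s^2 g_0 + g_1$ with $g_1$ subleading, I would show $A^{\tilde g_s}_{\partial M}(\tilde\eta, \tilde\eta) = A^\varrho_{\partial\Lambda}(\partial_t,\partial_t) + O(s) = O(s)$, giving $A^g_{\partial M}(\eta,\eta) = O(1)$.

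The main obstacle is making the rate $O(s)$ in the convergence of second fundamental forms precise. Mere Gromov--Hausdorff convergence to the tangent cone is not sufficient for a curvature estimate; what one really needs is a sufficiently regular asymptotic expansion of $g$ around $p_0$ with controlled remainder. My approach would be to do a direct computation from the expansion: write $X = X_0 + \delta X$ and $\eta = \partial_s + \delta\eta$ where $X_0 \perp \partial_s$ is the cone-level unit normal to $\partial M$ in $M$ (tangent to $D_s$), expand
\begin{equation}
A_{\partial M}(\eta, \eta) \;=\; -g(\nabla_\eta X, \eta)
\end{equation}
to leading order, and observe that the purely conical contribution $-g_{\text{cone}}(\nabla_{\partial_s} X_0, \partial_s)$ vanishes because $X_0$ is parallel-transported along the radial direction in the cone (a direct Christoffel calculation for $\mathrm{d}s^2 + s^2 g_0$), while the remaining terms are bounded by derivatives of $g_1$ and the small quantities $\delta X, \delta\eta$. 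Provided the asymptotic expansion is $C^2$ up to the vertex, each of those remaining terms is $O(1)$, and the lemma follows.
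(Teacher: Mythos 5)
Your argument follows the paper's route: deduce the bound from the vanishing of the radial second fundamental form of a cone boundary (the paper states this as a separate auxiliary lemma immediately afterwards), the limiting identification $s\eta \to \partial_s$ from Lemma~\ref{angle to pi/2}, and the scaling $A^{s^{-2}g}_{\partial M}(s\eta,s\eta) = s\,A^{g}_{\partial M}(\eta,\eta)$. The caveat you raise---that the $O(1)$ conclusion, rather than the weaker $o(s^{-1})$, requires a quantitative rate in the convergence of the rescaled metrics to the tangent cone and not merely metric (Gromov--Hausdorff) convergence---is a genuine point the paper leaves implicit, but it is consistent with rates the authors do assume elsewhere, e.g.\ $|A_s|^2 = 2s^{-2}+O(s^{-1})$ in Lemma~\ref{favorable sign} and $\bar h(t)=2t^{-1}+O(1)$.
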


\begin{proof}
  The lemma can be deduced from that $\eta$ is approximately the radial
  direction $\partial_s$ as $s \to 0$, the scaling property of $A_{\partial
  M}$ and the following lemma.
\end{proof}

\begin{lemma}
  Let $(S, \sigma)$ be a 2-surface with boundary and $(C = [0, \infty) \times
  S, \mathrm{d} s^2 + s^2 \sigma)$ be the cone over $(S, \sigma)$. Then the
  second fundamental form of $\partial C$ in $C$ in the direction $\partial_t$
  vanishes.
\end{lemma}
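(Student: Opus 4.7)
The plan is to use the warped-product structure of the cone. Pick local coordinates $(s, x^1, x^2)$ on $C$ where $(x^1, x^2)$ are coordinates on $S$ with $x^2 = 0$ cutting out $\partial S$ (so that $\partial C$ is locally $\{x^2 = 0\}$). The metric reads
\begin{equation}
g_C = \mathrm{d} s^2 + s^2 \sigma_{ij}(x) \mathrm{d} x^i \mathrm{d} x^j.
\end{equation}
Since $\partial_s g_C(\partial_s, \partial_s) = 0$ and $\partial_s g_C(\partial_s, \partial_{x^i}) = 0$, a direct Christoffel-symbol computation gives $\Gamma^s_{ss} = \Gamma^{x^i}_{ss} = 0$ for every $i$. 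Hence the radial field $\partial_s$ is parallel along itself, $\nabla^{g_C}_{\partial_s} \partial_s = 0$; equivalently, the radial rays $\{x = x_0\}$ are geodesics of $(C, g_C)$.

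Now pick any boundary point $p = (s_0, x_0) \in \partial C$ with $x_0 \in \partial S$ and let $\nu$ denote the unit outward normal of $\partial C$ in $C$ (which is a multiple of $\partial_{x^2}$ at $p$). The vector $\partial_s|_p$ is tangent to $\partial C$ because the entire radial ray through $p$ stays in $[0,\infty) \times \partial S = \partial C$. Using the standard formula $A_{\partial C}(X, Y) = \langle \nabla^{g_C}_X Y, \nu \rangle$ for tangent vectors $X, Y$, we get
\begin{equation}
A_{\partial C}(\partial_s, \partial_s) = \langle \nabla^{g_C}_{\partial_s} \partial_s, \nu \rangle = \langle 0, \nu \rangle = 0,
\end{equation}
which is the claim. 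Since the argument is a one-line consequence of the warped-product geodesic equation, there is no real obstacle; the only bookkeeping needed is to confirm that $\partial_s$ is indeed tangent to $\partial C$, which follows because $\partial C$ is itself a cone $[0,\infty) \times \partial S$ over $\partial S$ inside the ambient cone over $S$.
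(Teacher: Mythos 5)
Your proof is correct, and it uses a slightly different (arguably cleaner) route than the paper. You observe directly that the radial curves are geodesics, $\nabla^{g_C}_{\partial_s}\partial_s = 0$, which is a one-line consequence of the warped product Christoffel symbols, and then since $\partial_s$ is tangent to $\partial C = [0,\infty)\times\partial S$ you get $A_{\partial C}(\partial_s,\partial_s) = \langle\nabla_{\partial_s}\partial_s,\nu\rangle = 0$. The paper instead works from the Weingarten side: it uses the warped product identity $\nabla_{\partial_t} Z = s^{-1}Z$ for horizontal $Z$, notes that the unit normal $\nu$ of $\partial C$ is horizontal (tangent to the $S$ factor), and concludes $\nabla_{\partial_t}\nu$ is again horizontal, hence orthogonal to $\partial_t$. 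These two computations are essentially dual faces of the same warped product fact. One small distinction worth noting: the paper's Weingarten-style argument actually yields the stronger statement $A_{\partial C}(\partial_t, W) = 0$ for every $W$ tangent to $\partial C$ (since $\nabla_{\partial_t}\nu = s^{-1}\nu$ is orthogonal to all of $T\partial C$, not just to $\partial_t$), whereas your geodesic argument gives only the $(\partial_t,\partial_t)$ component. For the way the lemma is invoked in the paper — to control $A_{\partial M}(\eta,\eta)$ with $\eta$ asymptotically radial in Lemma~\ref{boundary 2ff at eta} — only the $(\partial_t,\partial_t)$ component is needed, so your proof suffices for that application, but if you wanted the full $A(\partial_t,\cdot)=0$ you would need to supplement with the observation about $\nabla_{\partial_t}$ of horizontal vectors.
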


\begin{proof}
  Let $Z$ be a tangent vector field over $\Sigma$, then by direct calculation
  $\nabla_{\partial_t} Z = \nabla_X \partial_t = s^{- 1} Z$. So $\langle
  \nabla_{\partial_t} Z, \partial_t \rangle = 0$ since on $C$ the metric is
  $\mathrm{d} t^2 + t^2 \sigma$. Due to the same reason, the unit normal
  vector $Z$ of $\partial C$ in $M$ is tangent to $\Sigma$, so the claim is
  proved.
\end{proof}

We are interested in the difference between $\alpha_{s, s^2 u}$ and the value
of $\bar{\gamma}$ which to avoid confusion we denote by $\beta_s$ ($\beta_{s,
s^2 u}$) at (the graphical perturbation $s^2 u$ of) $D_s$. Using the relation
of $s$ and $t$, $\beta = \bar{\gamma}_{s / \tau, s^2 u / \tau}$. By the
expansion of angles (see \eqref{angle variation}), we see
\[ \cos \alpha_{s, s^2 u} - \cos \alpha_s = -\sin \alpha_s \tfrac{\partial
   u}{\partial \nu_s} + s (- \cos \alpha_s A (s^{- 1} \nu_s, s^{- 1} \nu_s) +
   A_{\partial M} (\eta_s {,} \eta_s)) u + O (s^2) . \]
And
\[ s^{- 1} (\cos \beta_{s, s^2 u} - \cos \beta_s) = s u \tau^{- 1}
   \nabla_{\eta_{s / \tau}} \cos \bar{\gamma}_{s / \tau, s^2 u / \tau} + O
   (s^2) \]
Since each $\Sigma_t$ is stable capillary minimal surface under the metric $\bar{g}$, so we know that
\[  \tfrac{1}{\sin \bar{\gamma}} \nabla_{\eta_t} \cos \bar{\gamma} = - \cos
   \bar{\gamma} A (\nu_t, \nu_t) + A_{\partial M} (\eta_t {,} \eta_t) . \]
Based on the above asymptotic analysis and Lemmas \ref{angle to pi/2} and
\ref{boundary 2ff at eta}, we see
\begin{equation}
  \lim_{s \to 0} [s^{- 1} (\cos \alpha_{s, s^2 u} - \cos \alpha_s) - s^{- 1}
  (\cos \beta_{s, s^2 u} - \cos \beta_s)] =- \tfrac{\partial u}{\partial \nu_0}
  \label{angle difference limit}
\end{equation}
on $\partial D_0$ where $\nu_0$ is the outward normal of $\partial D_0$ in
$D_0$. By the elliptic strong maximum principle, the operator
\[ (-\Delta_0 - f,-\tfrac{\partial}{\partial \nu_0}) : C^{2, \alpha} (D_0) \cap
   C^{1, \alpha} (\bar{D}_0) \to C^{0, \alpha} (D_0) \times C^{0, \alpha}
   (\partial D_0) \]
is an isomorphism since $f \leqslant 0$ in $D_0$ due to Lemma \ref{favorable
sign} and $\tau \gneqq 1$. In other words, we can specify the limits
\begin{align}
& \lim_{s \to 0} [(H_{s, s^2 u} - \zeta_{s, s^2 u}) - (H_s - \zeta_s)]
\\
& \quad \text{ and } \lim_{s \to 0} [s^{- 1} (\cos \alpha_{s, s^2 u} - \cos
\alpha_s) - s^{- 1} (\cos \beta_{s, s^2 u} - \cos \beta_s)]
\end{align}
by choosing a suitable $u \in C^{2, \alpha} (D_0) \cap C^{1, \alpha}
(\bar{D}_0)$.

We have these facts: by Lemma \ref{angle to pi/2}, both $\alpha_s$ and
$\beta_s$ tend to $\pi / 2$ as $s \to 0$, so $\lim_{s \to 0} s^{- 1} (\alpha_s
- \beta_s)$ is a function on $\partial D_0$; \( H_s - \zeta_s = (2 - 2 \tau) s^{- 1} + O (1)\);
\begin{equation}\label{perturbation expansion}
  \text{ } H_{s, s^2 u} -
  \zeta_{s, s^2 u} = (2 - 2 \tau) s^{- 1} + O (1)
  \end{equation}
 for small $s > 0$ with a remainder term depending on $u$.
 Hence, we can specify a function $u$ to counter-effect the
$O (1)$ remainder term in $H_{s} - \zeta_{s}$ and make the remainder term in \eqref{perturbation expansion} strictly negative. That is, we can
specify a function $u$ such that
\begin{align}
\lim_{s \to 0} (H_{s, s^2 u} - \zeta_{s, s^2 u} - (2 - 2 \tau) s^{- 1}) & =
u_0 \text{ in } D_0, \\
\quad \lim_{s \to 0} s^{- 1} (\cos \alpha_{s, s^2 u} - \cos \beta_{s, s^2
u}) & < 0 \text{ along } \partial D_0,
\end{align}
for some negative function $u_0 \in C^{0, \alpha} (\bar{D}_0)$. Recall the
definitions of $\zeta$, $\tau$, $\beta$, and by continuity, there exists a
surface $\Sigma_- \subset M$ satisfying
\[ H - \bar{h} < 0 \text{ in } \Sigma_- \text{ and } \alpha > \bar{\gamma}
   \text{ along } \partial \Sigma_- . \]
This surface $\Sigma_-$ is a lower barrier in the sense of Definition
\ref{barrier condition}.

Now we can prove item (\ref{item metric cone}) of Theorem \ref{rigidity depending on structures}.

\begin{proof}[Proof of item (\ref{item metric cone}) of Theorem \ref{rigidity depending on structures}]
  Assume that $g$ and $\bar{g}$ do not have isometric tangent cone at $p_0$,
  then we can construct a barrier $\Sigma_-$ such that $H - \bar{h} < 0$ in
  $\Sigma_-$ and the angle $\alpha > \bar{\gamma}$ along $\partial \Sigma_-$.
  But due to item (\ref{item simple}) of Theorem \ref{rigidity depending on structures},
  this is not possible. So $g$ and $\bar{g}$ have isometric tangent cones at
  $p_0$, then by the construction of the foliation in Theorem \ref{foliation
  construction}, again we have a barrier near $t = 0$, but the barrier
  condition is now not strict. We can extend the rigidity $g = \bar{g}$ in item (\ref{item simple}) of Theorem \ref{rigidity depending on structures}
  beyond the barrier and to all of $M$.
\end{proof}

\begin{remark}
  \label{alternative to llarull}By considering only the mean curvature, this
  provides an alternative proof of Theorem \ref{llarull} in dimension 3.
  Moreover, we allow conical metrics of $(\mathbb{S}^3, g)$ at two antipodal
  points.
\end{remark}

\begin{remark}
  \label{gb in barrier}During the construction of barriers in the case of non-isometric cones, the Gauss-Bonnet
  theorem is only used in Lemma \ref{tau 1 imply isometric cone}.
\end{remark}

\section{Construction of barriers (II)}\label{barrier II}

In this section, we prove items (\ref{item euclid cone}) and (\ref{smooth}) of Theorem \ref{rigidity depending on structures}. Our method is similar
to the previous work {\cite{chai-scalar-2023-arxiv}}.

\subsection{Proof of item (\ref{item euclid cone}) of Theorem \ref{rigidity depending on structures}}
For convenience, we set $t_- = 0$. We will construct a lower barrier near $t=0$. As before, for any $t>0$, we set $\Sigma_t$ to be the $t$-level set of $t$ and $\Omega_t$ to be the $t$-sublevel set, that is, all points of $M$ which lie below $\Sigma_t$. 
We see from the assumption on the tangent cone at $p_-$ that the sequence $\{(t^{- 1} M, t^{- 2} \bar{g})\}_{t > 0}$ converges to some
right circular cone $\bar{C}$ in $\mathbb{R}^3$ equipped with a flat metric
$g_{\mathbb{R}^3}$ as $t \to 0$. Then $\{(t^{- 1} M, t^{- 2} g)\}_{t > 0}$
converges to the same cone $\bar{C}$ but with a different constant metric
$g_0$. The cone $(\bar{C}, g_0)$ can be represented as a cone in the Euclidean space $(\mathbb{R}^3, g_{\mathbb{R}^3})$.

%
We have the existence of a barrier if $(M, g)$ and $(M, \bar{g})$ have
non-isometric tangent cones at $p_0$.

\begin{lemma}
  \label{barrier with cone angle comparison}Let $M$ be given as in item (\ref{item euclid cone}) of Theorem \ref{rigidity depending on structures}. If the tangent cones
  of $(M, g)$ and $(M, \bar{g})$ at $p_-$ are not isometric, assume that the
  mean curvature comparison and the metric comparison hold near $p_-$, then
  there exists a surface $\Sigma_{-}$ satisfying
  \[ H - \bar{h} < 0 \text{ in } \Sigma_{-} \text{ and } \gamma_{\Sigma_-} > \bar{\gamma} \text{
     along } \partial \Sigma_{-} \]
  as the above. This surface $\Sigma_{-}$ is a barrier in the sense of Definition
  \ref{barrier condition}.
\end{lemma}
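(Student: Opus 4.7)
The plan is to follow the tangent-cone analysis developed in Subsection \ref{strict barrier} for item (\ref{item metric cone}), adapted to the present setting where $\psi(t_-)>0$ and both tangent cones at $p_-$ are Euclidean cones with no blow-up of $\bar h$. Set $t_-=0$ and pass to the rescalings $\{(t^{-1}M, t^{-2}\bar g)\}_{t>0}$ and $\{(t^{-1}M, t^{-2}g)\}_{t>0}$. By the hypothesis that $(|t-t_-|^{-1}\Sigma_t, |t-t_-|^{-2}\bar g)$ converges to the axial section, both limits are carried by the same underlying solid cone $\bar C\subset \mathbb{R}^3$, with a round circular cone metric associated to $\bar g$ and a Euclidean metric $g_0$ associated to $g$. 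The comparison $g\geqslant \bar g$ forces $g_0 \geqslant g_{\mathbb{R}^3}$ on the cross-section, and the non-isometry hypothesis rules out equality.

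Following the computations \eqref{mean curvature taylor expansion}--\eqref{angle deficit variation}, consider graphical perturbations $\Sigma_{t,t^2u}$ of the level sets $\Sigma_t$ and expand
\[
(H_{t,t^2u}-\bar h_{t,t^2u})-(H_t-\bar h_t) \quad \text{and} \quad (\cos\gamma_{t,t^2u}-\cos\bar\gamma_{t,t^2u})-(\cos\gamma_t-\cos\bar\gamma_t)
\]
in the rescaled geometry. Passing $t\to 0$ yields a boundary-value problem on the limit cross-section $D_0$ of the form $(-\Delta_0 u - fu,\ -\partial_{\nu_0} u)$, where $\Delta_0$ is the Laplacian of $(D_0, g_0)$ and $f$ encodes the second fundamental form of $\Sigma_t$ together with the discrepancy between the two cones, in the spirit of Lemma \ref{favorable sign}. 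Since $\psi(0)>0$, the function $\bar h$ is bounded near $t=0$ and many terms that were $O(t^{-1})$ in Section \ref{sec:cone} drop to $O(1)$, making the asymptotic bookkeeping slightly easier.

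Granted that $f$ has the favorable sign analogous to Lemma \ref{favorable sign}, the elliptic Neumann-type operator $(-\Delta_0 - f, -\partial/\partial\nu_0)$ is an isomorphism on $C^{2,\alpha}(D_0)\cap C^{1,\alpha}(\bar D_0)$ into $C^{0,\alpha}(D_0)\times C^{0,\alpha}(\partial D_0)$, so one can choose $u$ that prescribes
\[
\lim_{t\to 0}(H_{t,t^2u}-\bar h_{t,t^2u}) < 0 \text{ in } D_0, \qquad \lim_{t\to 0} t^{-1}(\cos\gamma_{t,t^2u} - \cos\bar\gamma_{t,t^2u}) < 0 \text{ on } \partial D_0.
\]
By continuity, for small $t>0$ the surface $\Sigma_-:=\Sigma_{t,t^2u}$ then satisfies $H-\bar h<0$ on its interior and $\gamma_{\Sigma_-} > \bar\gamma$ along $\partial \Sigma_-$, which is precisely a lower barrier in the sense of Definition \ref{barrier condition}.

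The main obstacle is verifying that the non-isometry of the two tangent cones genuinely forces the favorable sign of $f$ and hence the strict inequalities above. This is the analog of Lemma \ref{tau 1 imply isometric cone} in the Euclidean tangent cone regime: one must show that the comparisons at the rescaled cone level (metric via $g_0\geqslant g_{\mathbb{R}^3}$, Gauss curvature of the cross-section via $R_g\geqslant R_{\bar g}$, and geodesic curvature of its boundary via $H_{\partial_s M}\geqslant \bar H_{\partial_s M}$) together with $g_0\not\equiv g_{\mathbb{R}^3}|_{\bar C}$ produce a definite one-sided discrepancy, and hence the required invertibility. The detailed asymptotic computations are as in \cite{chai-scalar-2023-arxiv}, and only minor modifications are needed to adapt them to the spherical warped product setting considered here.
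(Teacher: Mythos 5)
Your proposal is a genuinely different strategy from the paper's, and the mechanism you propose does not transfer to this setting. The paper's proof is short and geometric: it passes to the rescaled tangent cone $\bar C$ in $\mathbb{R}^3$ carrying the two constant flat metrics $\bar g_{\mathbb{R}^3}$ and $g_0$, and then invokes the angle comparison of Proposition 4.9 in \cite{chai-scalar-2023-arxiv} to produce a \emph{plane} $P\subset\bar C$ whose dihedral angles with $\partial\bar C$, measured in $g_0$, are everywhere strictly larger than $\bar\gamma(t_-)$. The strict angle gap at the Euclidean-cone level is the sole source of the barrier's strict inequalities; the remainder (perturbing $P$ into a genuine surface $\Sigma_-\subset M$) follows Proposition 4.10 of \cite{chai-scalar-2023-arxiv}.

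Your plan instead tries to re-run the elliptic-isomorphism machinery from Subsection \ref{strict barrier}, in particular the favorable-sign argument of Lemma \ref{favorable sign}. That machinery does not apply here, and this is where the gap is. In item (\ref{item metric cone}) the favorable sign $f=2-2\tau\leqslant 0$ came from a precise cancellation between $|A_s|^2 = 2s^{-2}+O(s^{-1})$ and $s^{-2}(\zeta_{s,s^2u}-\zeta_s) = -2\tau s^{-2}u + O(1)$, both of which blow up because $\psi(t)\sim at$ and $\bar h(t)\sim 2/t$ near $t_-$. Under the hypotheses of item (\ref{item euclid cone}) we have $\psi(t_-)>0$, so $\bar h$ is bounded, and moreover $(t^{-1}\Sigma_t, t^{-2}\bar g)$ converges to a flat axial section, so the rescaled second fundamental form of $\Sigma_t$ tends to zero. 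Thus the coefficient $f$ in your operator $(-\Delta_0 - f,\,-\partial_{\nu_0})$ vanishes in the limit, the operator degenerates to a pure Neumann Laplacian (with constant kernel), and there is no ``favorable sign'' encoding the non-isometry. You observe that many $O(t^{-1})$ terms ``drop to $O(1)$, making the asymptotic bookkeeping slightly easier''; in fact the disappearance of those terms is exactly what kills the mechanism you are relying on. In this regime the discrepancy between $g$ and $\bar g$ does not appear as a sign-definite zeroth-order coefficient in the linearised operator, but as a strict inequality in the \emph{source data} --- the limiting contact angles. That is what Proposition 4.9 of \cite{chai-scalar-2023-arxiv} supplies, and nothing in your argument produces a substitute for it. So while your step of citing an analog of Lemma \ref{tau 1 imply isometric cone} is well-intentioned, the claim that ``only minor modifications are needed'' is not justified, and the key step of the proof remains open.
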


\begin{proof}
  First, we note that the mean curvature comparison and the metric comparison
  (we only need boundary metric comparison) are preserved in the limits. Because that the tangent cone of $(M,\bar g)$ at $p_-$ is a round (solid) circular cone and that $(t^{-1}\Sigma_t,t^{-2} \bar{g})$ converges to its axial section as $t\to 0$, it follows from the angle comparison of
  {\cite[Proposition 4.9]{chai-scalar-2023-arxiv}} that there exists a plane $P$
  in $\bar{C}$ such that the the dihedral angles formed by $\partial \bar{C}$
  and $P$ in the metric $g_0$ are everywhere larger than $\bar{\gamma} (t_-)$.
  
  We gain a lot of freedom to construct the barrier from the
  \text{{\itshape{strict}}} comparison of angles. The rest of the argument is
 analogous to {\cite[Proposition 4.10]{chai-scalar-2023-arxiv}}.
\end{proof}

\begin{remark}
  Note that the scalar curvature comparison is not needed here.
\end{remark}

\begin{proof}[Proof of item (\ref{item euclid cone}) of Theorem \ref{rigidity depending on structures}]
  First, the tangent cones of $(M, g)$ and $(M, \bar{g})$ at $p_-$ must be
  isometric. Indeed, by Lemma \ref{barrier with cone angle comparison} and
  item (\ref{item simple}) of Theorem \ref{rigidity depending on structures}, the barrier constructed in Lemma \ref{barrier with
  cone angle comparison} cannot have $H - \bar{h} < 0$ in $\Sigma_{-}$ and $\gamma_{\Sigma_-} <
  \bar{\gamma}$ hold strictly along $\partial\Sigma_{-}$.
  
  By following Subsection \ref{foliation}, we can construct graphical
  perturbations $\Sigma_{t, t^2 u}$ of $\Sigma_t$ which satisfy Proposition
  \ref{foliation construction}. For every sufficiently small $t > 0$,
  $\Sigma_{t, t^2 u}$ \ is a barrier in the sense of Definition \ref{barrier
  condition}, we conclude that $g = \bar{g}$ for the region bounded by
  $\Sigma_{t, t^2 u}$ and $P_+ \cap \partial M$ for every $t > 0$ from item (\ref{item simple}) of Theorem \ref{rigidity depending on structures}. Hence, we finish the proof of item (\ref{item euclid cone}) of Theorem \ref{rigidity depending on structures}.
\end{proof}

%
%
\subsection{Proof of item (\ref{smooth}) of Theorem \ref{rigidity depending on structures}} \label{sec:spherical_part}

This part is a slightly extension of the argument in Section 5 in our previous paper \cite{chai-scalar-2023-arxiv} and Ko-Yao's paper \cite{ko-scalar-2024}.
So we only sketch the key steps here and refer to the above papers for more details.

Again, we set $s_- = 0$ and we construct a lower barrier near $s=0$. Suppose $M$ is given by
\[
	M=\{ (s,p) \in [0,\varepsilon)\times {S}^2:s\ge f(p) \},
\]
near $p_-=O$, where $f$ is a smooth function such that $f(p_-)=0$ and $\mathrm{Hess}f$ is positive definite at $p_-$ under metric $g_{S^2}$.
Note that we need to assume $\psi(0)\neq 0$, otherwise, the manifold $M$ will have a cusp at point $O=(0,0,0)$.
For simplicity, we assume $\psi(t(0))=1$, where $t(s)$ is determined by \eqref{conformally related metric}.






To better illustrate the situation, we can choose the coordinate $(x_1,x_2)$ on ${S}^2$ such that the expansion of metric $\bar{g}$ in \eqref{conformally related metric} at $O$ is given by
\[
	\bar{g}=ds^2+dx_1^2+dx_2^2+O(s)+O(|x|^2),
\]
where $|x|=\sqrt{x_1^2+x_2^2}$.

For simplicity, we denote $\bar{g}_0=ds^2+dx_1^2+dx_2^2$ as the linearised part of $\bar{g}$ at $O$.
After a suitable rotation, we can write $g=g_0+sh+O(s^2)$ for some constant metric $g_0$ defined as
\begin{equation}
	g_0=a_{33}ds^2+(a_{11}dx_1^2+a_{22}dx_2^2)+2a_{13}dx_1ds+2a_{23}dx_2ds,
	\label{eq:G0Def}
\end{equation}
where the matrix
\[
	\begin{bmatrix}
		a_{11} & 0 & a_{13}\\
		0 & a_{22} & a_{23} \\ 
		a_{13} & a_{23} & a_{33}
	\end{bmatrix}
\]
is positive definite and satisfies $a_{11},a_{22},a_{33}\ge 1$.

We assume the manifold $M$ can be written as
\[
	M=\{ (s,x_1,x_2):s \in  [0,\varepsilon), s\le \zeta(x_1,x_2) \},
\]
where $\zeta(x_1,x_2)=c_{11}x_1^2+2c_{12}x_1x_2+c_{22}x_2^2+O(|x|^3)$ is a smooth function with $c_{11},c_{22},c_{11}c_{22}-c_{12}^2>0$.
Here, we have used the fact that $\mathrm{Hess}f$ is positive definite at $p_-$ under metric $g_{S^2}$.


We write $a^{ij}$ as the inverse matrix of $a_{ij}$, and define several constants as
\begin{align*}
	B={} & \sqrt{a^{33}((\sqrt{a_{11}}c_{22}+\sqrt{a_{22}}c_{11})^2+(\sqrt{a_{11}}-\sqrt{a_{22}})^2c_{12}^2)}\\
	b_{11}={}& a^{33}B^{-1}c_{11}^{-1}(a_{11}(c_{11}c_{22}-c_{12}^2)+\sqrt{a_{11}a_{22}}(c_{11}^2+c_{12}^2))\\
	b_{12}={}& b_{21}= a^{33}B^{-1} \sqrt{a_{11}a_{22}}(c_{11}+c_{22})\\
	b_{22}={}& a^{33}B^{-1}c_{22}^{-1}(a_{22}(c_{11}c_{22}-c_{12}^2)+\sqrt{a_{11}a_{22}}(c_{12}^2+c_{22}^2)).
\end{align*}
and consider the function $G_{\lambda,s}$ defined by
\begin{align*}
	G_{\lambda,s}(x_1,x_2)={}&c_{11}(b_{11}(1+\lambda)-1)x_1^2+c_{22}(b_{22}(1+\lambda)-1)x_2^2\\
					   & + 2c_{12}(b_{12}(1+\lambda)-1)x_1x_2-s^2.
\end{align*}
and the surface $\Sigma_{\lambda,s}$ is defined by
\[
	\Sigma_{\lambda,s}=\left\{ (G_{\lambda,s}(x), x): x \in \mathbb{R}^2 \text{ and } G_{\lambda,s}(x)\ge \zeta(|x|^2) \right\}.
\]
We use an ellipse $E_\lambda$ to parameterize $\Sigma_{\lambda,s}$ where $E_\lambda\subset \mathbb{R}^2$ is given by
\[
	E_\lambda:=\{ \hat{x} \in \mathbb{R}^2: c_{11}b_{11}\hat{x}_1^2+c_{22}b_{22}\hat{x}_2^2+2c_{12}b_{12}\hat{x}_1\hat{x}_2< \frac{1}{1+\lambda}\}.
\]

Then, the surface $\Sigma_{\lambda,s}$ can be written as a map $E_\lambda\rightarrow \Sigma_{\lambda,s}$ such that
\[
	\Sigma_{\lambda,s}(\hat{x}):=(G_{\lambda,s}(\Phi_{\lambda,s}(\hat{x})),\Phi_{\lambda,s}(\hat{x}))
\]
where $\Phi_{\lambda,s}:E_\lambda\rightarrow \mathbb{R}^2$ satisfies
\[
	\Phi_{\lambda,s}(\hat{x})=s\hat{x}+O(s^3).
\]

We also use $\Sigma_s=\Sigma_{0,s}$ for short.
We have the following result by the argument in \cite{chai-scalar-2023-arxiv}.

\begin{figure}[ht]
    \centering
	\begingroup
	\def\svgwidth{0.8\columnwidth}
\begingroup%
  \makeatletter%
  \providecommand\color[2][]{%
    \errmessage{(Inkscape) Color is used for the text in Inkscape, but the package 'color.sty' is not loaded}%
    \renewcommand\color[2][]{}%
  }%
  \providecommand\transparent[1]{%
    \errmessage{(Inkscape) Transparency is used (non-zero) for the text in Inkscape, but the package 'transparent.sty' is not loaded}%
    \renewcommand\transparent[1]{}%
  }%
  \providecommand\rotatebox[2]{#2}%
  \newcommand*\fsize{\dimexpr\f@size pt\relax}%
  \newcommand*\lineheight[1]{\fontsize{\fsize}{#1\fsize}\selectfont}%
  \ifx\svgwidth\undefined%
    \setlength{\unitlength}{680.31496063bp}%
    \ifx\svgscale\undefined%
      \relax%
    \else%
      \setlength{\unitlength}{\unitlength * \real{\svgscale}}%
    \fi%
  \else%
    \setlength{\unitlength}{\svgwidth}%
  \fi%
  \global\let\svgwidth\undefined%
  \global\let\svgscale\undefined%
  \makeatother%
  \begin{picture}(1,0.5)%
    \lineheight{1}%
    \setlength\tabcolsep{0pt}%
    \put(0,0){\includegraphics[width=\unitlength,page=1]{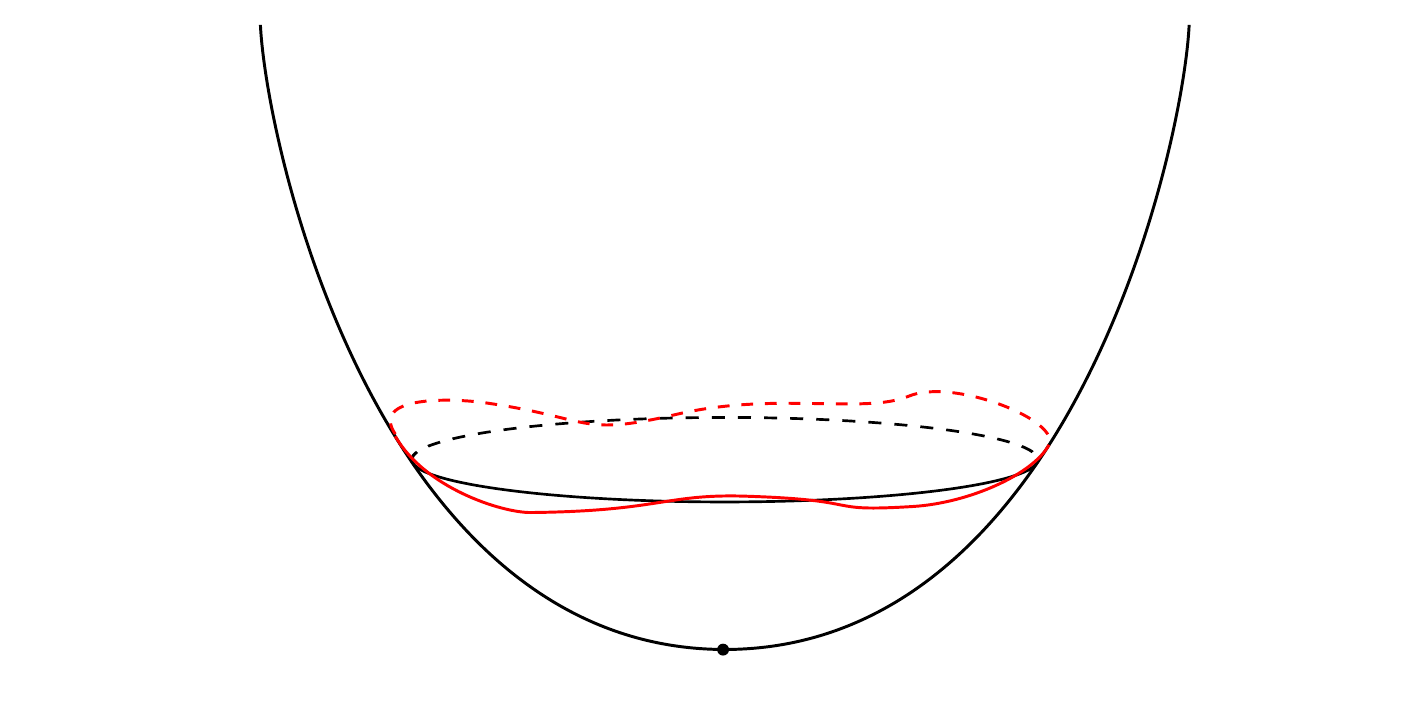}}%
    \put(0.73644702,0.15431277){\color[rgb]{0,0,0}\makebox(0,0)[lt]{\lineheight{0}\smash{\begin{tabular}[t]{l}$\Sigma_{s'}$\end{tabular}}}}%
    \put(0,0){\includegraphics[width=\unitlength,page=2]{spherical-point.pdf}}%
    \put(0.81481101,0.33425357){\color[rgb]{1,0,0}\makebox(0,0)[lt]{\lineheight{0}\smash{\begin{tabular}[t]{l}$\Sigma_{\lambda,s}$\end{tabular}}}}%
    \put(0.81040338,0.30223726){\color[rgb]{0,0,0}\makebox(0,0)[lt]{\lineheight{0}\smash{\begin{tabular}[t]{l}$\Sigma_s$\end{tabular}}}}%
    \put(0,0){\includegraphics[width=\unitlength,page=3]{spherical-point.pdf}}%
    \put(0.75675271,0.18785998){\color[rgb]{1,0,0}\makebox(0,0)[lt]{\lineheight{0}\smash{\begin{tabular}[t]{l}$\Sigma_{s',u}$\end{tabular}}}}%
  \end{picture}%
\endgroup%

	\endgroup

    \caption{Construction of $\Sigma_{\lambda,s}$ and $\Sigma_{s,u}$.}
    \label{fig:spherical-point}
\end{figure}

\begin{proposition}
	\label{prop_angleAsy}
	Suppose the metric $g$ can be written as $g=g_0+sh+O(s^2)$ where $g_0$ is the constant metric defined in \eqref{eq:G0Def} and $h$ is a bounded symmetric two-tensor.
	Then, we have
	\begin{align*}
		\cos \gamma_{\lambda,s}(\hat{x})={}&\cos \bar{\gamma}_{\lambda,s}(\hat{x})
		-4\lambda s^2\left[ 
		\frac{(\hat{x}_1c_{11}b_{11}+\hat{x}_2c_{12}b_{12})^2}{a_{11}a^{33}}+\frac{(\hat{x}_1c_{12}b_{12}+\hat{x}_2c_{22}b_{22})^2}{a_{22}a^{33}}\right] \\
		{}&+s^2O(\lambda^2)+A(\hat{x})s^3+L(h)s^3+O(s^4).
	\end{align*}
	for any $\hat{x} \in E_\lambda$.
	Here, $A(\hat{x})$ is a bounded term (not related to $s$ and $h$) which is also odd symmetric with respect to $\hat{x}$, $L(h)$ is a bounded term (not related to $s$) relying on $h$ linearly.
\end{proposition}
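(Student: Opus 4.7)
The plan is to realize the contact angle via level-set gradients, expand systematically in the three small parameters $s$, $\lambda$, and the metric perturbation $h$, and then match the resulting expansion against the claimed formula. Writing $\Sigma_{\lambda,s}$ as the zero set of $F_{\Sigma}(y,x) := y - G_{\lambda,s}(x)$ and $\partial M$ as the zero set of $F_{\partial}(y,x) := y - \zeta(x)$ (where $y$ denotes the first coordinate), the cosine of the contact angle at a common boundary point takes, for any Riemannian metric $\tilde g$, the closed form
\[
\cos\gamma = \frac{\tilde g^{-1}(dF_\Sigma, dF_\partial)}{|dF_\Sigma|_{\tilde g}\,|dF_\partial|_{\tilde g}},
\]
up to a fixed sign. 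I would apply this formula with $\tilde g = g$ and with $\tilde g = \bar g$, pull back to the reference ellipse by the parameterization $x = \Phi_{\lambda,s}(\hat x) = s\hat x + O(s^3)$ with $\hat x \in \partial E_\lambda$, and subtract.

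Next I would Taylor expand. The leading comparison is done against the constant parts of the metrics: under $g_0$ and $\bar g_0 = ds^2 + dx_1^2 + dx_2^2$, both $dF_{\Sigma}$ and $dF_{\partial}$ are affine in $x$, so the cosine is a rational function whose numerator and denominator are polynomial in $x$, evaluated at $x = s\hat x + O(s^3)$. The coefficients $b_{11}, b_{12}, b_{22}$ and $B$ in the definition of $G_{\lambda,s}$ are engineered precisely so that at $\lambda = 0$ the order-$s^2$ terms of $\cos\gamma_{0,s}|_{g_0}$ and $\cos\bar\gamma_{0,s}|_{\bar g_0}$ agree identically on $\partial E_0$; verifying this cancellation is a finite linear-algebra identity involving $a_{ij}$. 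Differentiating the rational expression in $\lambda$ at $\lambda = 0$ then extracts the stated linear-in-$\lambda$ coefficient
\[
-4s^2\left[\frac{(\hat x_1 c_{11} b_{11} + \hat x_2 c_{12} b_{12})^2}{a_{11}a^{33}} + \frac{(\hat x_1 c_{12} b_{12} + \hat x_2 c_{22} b_{22})^2}{a_{22}a^{33}}\right],
\]
with a $O(\lambda^2) s^2$ remainder controlled uniformly by the second derivative in $\lambda$ of a smooth expression.

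For the $s^3$ contributions I would split them into two pieces. First, the $O(s^3)$ cubic correction in $\Phi_{\lambda,s}$ together with the cubic (and higher) correction in $\zeta(x)$ produces, under $g_0$, a term of the form $A(\hat x) s^3$; since $G_{0,s}(x)$ and the quadratic part of $\zeta(x)$ are even in $x$ and the constant metrics $g_0, \bar g_0$ are invariant under $x \mapsto -x$, all contributions to $\cos\gamma_{0,s}|_{g_0} - \cos\bar\gamma_{0,s}|_{\bar g_0}$ of orders $s^0, s^1, s^2$ are even in $\hat x$, and the leftover $s^3$ term $A(\hat x)$ is therefore odd in $\hat x$. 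Second, writing $g^{-1} = g_0^{-1} - s\,g_0^{-1} h g_0^{-1} + O(s^2)$ and linearising the closed-form formula above in $sh$ produces a term linear in $h$ times one extra power of $s$; because the leading geometric factor is already $s^2$, this contributes $L(h) s^3$ for a bounded linear functional $L$ of $h$. All remaining terms are absorbed in the $O(s^4)$ or $s^2 O(\lambda^2)$ errors.

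The main obstacle is the second step: verifying that the explicit choice of $b_{11}, b_{12}, b_{22}, B$ gives the precise cancellation at order $s^2$ and then extracting the coefficient $-4$ cleanly. This is a substantial but purely algebraic computation; as remarked before the statement, it closely parallels the arguments in Section 5 of \cite{chai-scalar-2023-arxiv} and in \cite{ko-scalar-2024}, so the task is mainly to check that those computations extend from the flat background $\bar g_0 = g_{\mathbb{R}^3}$ to the slightly more general constant background $\bar g_0 = ds^2 + dx_1^2 + dx_2^2$ considered here without altering the structural form of the expansion, and that the $O(s)$ and $O(|x|^2)$ pieces of the warped-product background contribute only to the error terms already accounted for.
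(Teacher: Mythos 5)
Your approach is essentially the same as the paper's: realize $\cos\gamma$ as a rational function of metric contractions of the level-set gradients $dF_\Sigma$, $dF_\partial$, expand in $s$, $\lambda$, $h$, observe that the $b_{ij}$ were chosen precisely to make the order-$s^2$ coefficients of $\cos\gamma_{0,s}$ (under $g_0$) and $\cos\bar\gamma_{0,s}$ (under $\bar g$) cancel, and then read off the linear-in-$\lambda$ coefficient by differentiation. The paper's sketch does exactly this, writing down the two $s^2$ expansions under $g_0$ and $\bar g$ and deferring the $h$-linear piece to the computations in \cite{ko-scalar-2024} and \cite{chai-scalar-2023-arxiv}, so there is no structural novelty in your route.

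One step in your write-up does not stand up: you argue that because the order-$s^0,s^1,s^2$ contributions are even in $\hat x$, ``the leftover $s^3$ term $A(\hat x)$ is therefore odd.'' Evenness of the lower-order coefficients says nothing about the parity of the next coefficient; a smooth function can have even coefficients through order two and an even coefficient at order three as well. What is actually needed is a direct parity argument on the $s^3$ coefficient itself: with $h=0$, the quadratic truncations of $\zeta$ and of $G_{\lambda,s}$, together with the constant metrics, make the whole configuration invariant under $\hat x\mapsto -\hat x$; the only parity-breaking input at order $s^3$ is the cubic part of $\zeta$ (and the $O(s^3)$ correction in $\Phi_{\lambda,s}$), and tracking how that odd-in-$x$ perturbation enters the rational expression is what yields oddness of $A(\hat x)$. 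You correctly identify the sources of the $s^3$ term, but the logical step from ``lower orders are even'' to ``this order is odd'' should be replaced by this genuine symmetry argument.
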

\begin{proof}
	[Sketch of the proof]
	We use the same argument for Proposition 5.1 in \cite{ko-scalar-2024} and also track the term $s^3$ to get the following expansions
\begin{align*}
	\cos \measuredangle _{g_0}(\Sigma_{\lambda,s},\partial M)={}&1-\frac{2(\hat{x}_\beta c _{\alpha\beta}b_{\alpha \beta}(1+\lambda))^2}{a_{\alpha\alpha}a^{33}}s^2+A(\hat{x})s^3+O(s^4)\\
	\cos \bar{\gamma}_{\lambda,s}(\hat{x})={}&1-2s^2\left[ (\hat{x}_1c_{11}+\hat{x}_2c_{12})^2+(\hat{x}_1c_{12}+\hat{x}_2c_{22})^2 \right] +A(\hat{x})s^3+O(s^4),
\end{align*}
where we assume $c_{21}=c_{12}$.
Together with the remaining computation for Proposition 5.1 in \cite{ko-scalar-2024} and the Corollary 5.5 in \cite{chai-scalar-2023-arxiv} (see the proof for Corollary 5.17 in \cite{chai-scalar-2023-arxiv}), we can establish the result.
\end{proof}

As a corollary of Proposition \ref{prop_angleAsy}, we can easily establish the following results for $\sin \gamma_s$,
\begin{align}
	\sin \gamma_{s}(\hat{x})={}&\sin \bar{\gamma}_s(\hat{x})+O(s^2)\\
	={}& 2s\sqrt{(\hat{x}_1c_{11}+\hat{x}_2c_{12})^2+(\hat{x}_1c_{12}+\hat{x}_2c_{22})^2}+O(s^2),\label{eq:corAngleSin}
\end{align}
and the following proposition.


\begin{proposition}
	\label{prop_GreaterAngleG0}
	Suppose the conditions in Proposition \ref{prop_angleAsy} hold.
	Then, for any $\lambda>0$, we can find $s_0>0$ (might rely on $\lambda$) such that for any $s<s_0$, we have
	\begin{equation}
		\gamma_{\lambda,s}(\hat{x})>\bar{\gamma}_{\lambda,s}(\hat{x})
		\label{eq:corGreaterAngle}
	\end{equation}
	for any $\hat{x} \in \partial E_\lambda$.
\end{proposition}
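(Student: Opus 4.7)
My plan is to extract the leading negative term from the expansion in Proposition~\ref{prop_angleAsy} and to dominate the remaining higher-order terms. Since cosine is decreasing on $[0,\pi]$, the desired inequality $\gamma_{\lambda,s}(\hat{x}) > \bar{\gamma}_{\lambda,s}(\hat{x})$ is equivalent to strict negativity of
\[
\cos\gamma_{\lambda,s}(\hat{x}) - \cos\bar{\gamma}_{\lambda,s}(\hat{x}) = -4\lambda s^2 Q(\hat{x}) + s^2 O(\lambda^2) + \bigl(A(\hat{x})+L(h)\bigr) s^3 + O(s^4),
\]
where $Q(\hat{x})$ abbreviates the bracketed sum of squares appearing in Proposition~\ref{prop_angleAsy}.

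The key step is to establish a uniform positive lower bound $Q(\hat{x}) \geq C_\lambda > 0$ on $\partial E_\lambda$. Writing $Q(\hat{x}) = (a_{11}a^{33})^{-1}(M\hat{x})_1^2 + (a_{22}a^{33})^{-1}(M\hat{x})_2^2$ where
\[
M = \begin{pmatrix} c_{11}b_{11} & c_{12}b_{12} \\ c_{12}b_{12} & c_{22}b_{22}\end{pmatrix},
\]
I would observe that $Q$ vanishes precisely on $\ker M$. However, $M$ is the very matrix whose associated quadratic form defines the ellipse $E_\lambda$, so it is positive definite by construction; hence $\ker M = \{0\}$ and $Q > 0$ away from the origin. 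Compactness of the closed curve $\partial E_\lambda$ then delivers the constant $C_\lambda > 0$.

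With $C_\lambda$ in hand, the conclusion follows by order-of-magnitude bookkeeping. In the regime where Proposition~\ref{prop_angleAsy} applies (which is tailored to small $\lambda$), the $s^2$-coefficient $-4\lambda Q(\hat{x}) + O(\lambda^2)$ is bounded above by $-2\lambda C_\lambda$ once $\lambda$ is sufficiently small relative to $C_\lambda$, and the remainder $(A(\hat{x}) + L(h))s^3 + O(s^4)$ is uniformly bounded on $\partial E_\lambda$ by $C'_\lambda s^3$ for some constant $C'_\lambda$ (the perturbation tensor $h$ being fixed a priori by the ambient metric $g$). Choosing $s_0 = s_0(\lambda)$ with $C'_\lambda s_0 < \lambda C_\lambda$ yields
\[
\cos\gamma_{\lambda,s}(\hat{x}) - \cos\bar{\gamma}_{\lambda,s}(\hat{x}) \leq -\lambda C_\lambda s^2 < 0
\]
for every $s < s_0$ and every $\hat{x} \in \partial E_\lambda$, which is the required inequality.

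The main technical obstacle has in fact already been absorbed into Proposition~\ref{prop_angleAsy}; for the present proposition there is no delicate cancellation to pursue, only the algebraic positivity check above together with an elementary comparison of the $s^2$ and $s^3$ scales. I would present the argument in precisely this order: first recall the expansion, then verify positivity of $Q$ on $\partial E_\lambda$ via positive definiteness of $M$, and finally close with the order-of-magnitude choice of $s_0$.
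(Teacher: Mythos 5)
The paper offers no proof here; Proposition~\ref{prop_GreaterAngleG0} is stated as an immediate consequence of Proposition~\ref{prop_angleAsy}. Your reconstruction is correct and is precisely the argument the paper is leaving implicit: $Q(\hat{x})$ decomposes as a positive combination of the squared components of $M\hat{x}$, where $M = \bigl(\begin{smallmatrix} c_{11}b_{11} & c_{12}b_{12} \\ c_{12}b_{12} & c_{22}b_{22}\end{smallmatrix}\bigr)$ is the positive-definite matrix defining $E_\lambda$, so $Q$ vanishes only at the origin; compactness of $\partial E_\lambda$ then furnishes $C_\lambda = \min_{\partial E_\lambda} Q > 0$, after which the $s^3$ and $s^4$ remainders are beaten by choosing $s_0(\lambda)$ small.

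One thing to tighten. You restrict to ``sufficiently small'' $\lambda$ in order to absorb the $s^2 O(\lambda^2)$ term into $-4\lambda s^2 Q$, but the proposition as stated claims the conclusion for \emph{every} $\lambda>0$. Tracing the $O(\lambda^2)$ term through the two displayed expansions in the sketch of Proposition~\ref{prop_angleAsy}, it arises from the $(1+\lambda)^2$ factor in $\cos\measuredangle_{g_0}(\Sigma_{\lambda,s},\partial M)$ and hence carries the same quadratic form $Q(\hat{x})$ and the same sign as the $-4\lambda$ term, so the statement does hold for all $\lambda>0$ (and no smallness hypothesis on $\lambda$ is needed in Proposition~\ref{prop_angleAsy} itself, contrary to your parenthetical). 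For the sole downstream application --- Proposition~\ref{prop_GreaterMean}, which takes $\lambda$ small --- your version is enough, but to match the stated proposition you should either trace the sign of the $O(\lambda^2)$ coefficient or restate the result for $\lambda\in(0,\lambda_0)$. Everything else --- the positivity of $Q$ via $\ker M=\{0\}$, the compactness argument, and the $s^2$ versus $s^3$ order comparison --- is sound.
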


We need to analyze the asymptotic behavior of mean curvature.
We define the following mean curvatures:
\begin{align*}
	H_{\lambda,s}^{g}(\hat{x}):={}&\text{Mean curvature of $\Sigma_{\lambda,s}$ at $\Sigma_{\lambda,s}(\hat{x})$ under metric $g$},\\
	H_{\lambda,s,\partial M}^{g}(\hat{x}):={}& \text{Mean curvature of $\partial M$ at $(\varphi(|\Phi_{\lambda,s}(\hat{x})|^2),\Phi_{\lambda,s}(\hat{x}))$ under metric $g$}.
\end{align*}

Using the same computation for Corollary 5.2 in \cite{ko-scalar-2024}, we have
\begin{proposition}
	\label{prop_meanAsy}
	Suppose the metric $g$ can be written as $g=g_0+s h+O(s^2)$ where $g_0$ is a constant metric defined in \eqref{eq:G0Def}, and $h$ is a bounded symmetric two-tensor. 
	Then, we have the following formula for the behavior of mean curvature
	\begin{equation}
		\label{eq:corMean}
		H^g_s(\hat{x})=H^g_{s,\partial M}(\hat{x})-H^{\bar{g}}_{s,\partial M}(\hat{x})-2(c_{11}+c_{22})+\frac{2B}{\sqrt{a_{11}a_{22}a^{33}}}
		+sL(\hat{x})+O(s^2),
	\end{equation}
	for any $\hat{x} \in E$.
	Here, we write $H^g_s=H^g_{0,s}$ and $H^g_{s,\partial M}=H^g_{0,s,\partial M}$ for short.
\end{proposition}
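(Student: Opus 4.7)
The plan is to compute $H^g_s(\hat{x})$ by direct graphical expansion and then rearrange the resulting asymptotics so that the contribution of the ambient metric $g$ on $\partial M$ is isolated in the combination $H^g_{s,\partial M}-H^{\bar g}_{s,\partial M}$. I would first parameterize $\Sigma_s$ by $\hat{x}\in E_0$ via the map $\hat{x}\mapsto (G_{0,s}(\Phi_{0,s}(\hat{x})),\Phi_{0,s}(\hat{x}))$, using $\Phi_{0,s}(\hat{x})=s\hat{x}+O(s^3)$. At this scale $G_{0,s}\circ \Phi_{0,s}$ is a pure quadratic form in $\hat{x}$ of size $s^2$ with constant Hessian determined by $c_{ij}$ and $b_{ij}$, which is the key simplification that makes the leading term of the mean curvature a \emph{constant} in $\hat{x}$.

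Next, I would compute the tangent frame $\partial_{\hat{x}_\alpha}\Sigma_s$ and the unit normal $N^g$ of $\Sigma_s$ with respect to $g=g_0+sh+O(s^2)$. Expanding the unit normal and second fundamental form in powers of $s$ and dividing by the induced metric, the leading order of $H^{g_0}_s(\hat{x})$ is a constant (independent of $\hat{x}$) obtained by tracing the Hessian of $G_{0,s}\circ \Phi_{0,s}$ against the inverse of the boundary part of $g_0$. After algebraic simplification using the explicit formulas for $b_{ij}$ in terms of $a_{ij}$ and $c_{ij}$, together with the definition of $B$, this constant equals exactly $\tfrac{2B}{\sqrt{a_{11}a_{22}a^{33}}}$; this is the step modeled on Corollary 5.2 of \cite{ko-scalar-2024}. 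The first order correction $sL(\hat{x})$ then comes from the standard variation-of-mean-curvature formula applied to $h$, which is bounded and linear in $h$, giving the $sL(\hat{x})$ term together with an $O(s^2)$ remainder.

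To produce the combination $H^g_{s,\partial M}-H^{\bar g}_{s,\partial M}-2(c_{11}+c_{22})$, I would compute both boundary mean curvatures at the point $(\zeta(|\Phi_{0,s}(\hat{x})|^2),\Phi_{0,s}(\hat{x}))$ directly. With $\zeta$ quadratic of leading Hessian $2\,\mathrm{diag}(c_{11},c_{22})+2c_{12}$ off-diagonal, the $\bar g$-mean curvature of $\partial M$ at the origin expands as $2(c_{11}+c_{22})+O(s)$, so $H^{\bar g}_{s,\partial M}(\hat{x})=2(c_{11}+c_{22})+sL'(\hat{x})+O(s^2)$. Under the true metric $g=g_0+sh+O(s^2)$ the corresponding expansion produces the same order-$s^0$ term from the $g_0$-induced trace, plus an $O(s)$ piece linear in $h$. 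Subtracting and noting that both $g_0$-boundary-traces and the constant $\tfrac{2B}{\sqrt{a_{11}a_{22}a^{33}}}$ from the interior computation live on the same algebraic footing rearranges the formula into the stated form \eqref{eq:corMean}.

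The main obstacle is bookkeeping rather than conceptual: one must keep track of three competing expansions (the surface $\Sigma_s$, the metric $g$, and the boundary $\partial M$) all to order $s$, and verify that the $s^0$ constants combine into the particular algebraic combination $-2(c_{11}+c_{22})+\tfrac{2B}{\sqrt{a_{11}a_{22}a^{33}}}$ once one passes from $g_0$-mean-curvatures to $g$-mean-curvatures. Since the same bookkeeping has already been carried out in Corollary 5.2 of \cite{ko-scalar-2024} and in the proof of Corollary 5.17 of \cite{chai-scalar-2023-arxiv}, the cleanest route is to import those computations verbatim and only check that the extra $sh$ perturbation contributes only an $sL(\hat{x})+O(s^2)$ correction, which follows from the linear dependence of the first variation of mean curvature on the metric perturbation.
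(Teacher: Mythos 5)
Your high-level plan—expand $H^g_s$ graphically at scale $s$, isolate the $h$-contribution via the first variation of mean curvature, compare against the boundary expansions of $\partial M$, and defer the heavy algebra to Ko-Yao's Corollary 5.2—is the same route the paper itself takes, so the strategy is fine. But two of your intermediate identifications of the order-$s^0$ constants are wrong, and since verifying those constants \emph{is} the content of the proposition, this is a real gap, not just loose bookkeeping.

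You claim that tracing the Hessian of $G_{0,s}\circ\Phi_{0,s}$ against the induced metric shows that the leading order of $H^{g_0}_s(\hat{x})$ ``equals exactly $\tfrac{2B}{\sqrt{a_{11}a_{22}a^{33}}}$.'' This is false. In the flat case $a_{ij}=\delta_{ij}$ one computes $a^{33}=1$, $B=c_{11}+c_{22}$, and $b_{11}=b_{22}=b_{12}=1$, so $G_{0,s}$ is constant in $x$ and $\Sigma_{0,s}$ is a flat piece of plane with $H^{g_0}_s\equiv 0$; yet $\tfrac{2B}{\sqrt{a_{11}a_{22}a^{33}}}=2(c_{11}+c_{22})\neq 0$. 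What the trace actually yields (using $\partial^2_{\alpha\beta}G_{0,s}=2c_{\alpha\beta}(b_{\alpha\beta}-1)$, the induced metric $\operatorname{diag}(a_{11},a_{22})$, and the unit normal normalization $(a^{33})^{-1/2}$) is
\begin{equation}
\lim_{s\to 0} H^{g_0}_s \;=\; \frac{2}{\sqrt{a^{33}}}\left(\frac{c_{11}(b_{11}-1)}{a_{11}}+\frac{c_{22}(b_{22}-1)}{a_{22}}\right)
\;=\; \frac{2B}{\sqrt{a_{11}a_{22}a^{33}}}-\frac{2}{\sqrt{a^{33}}}\left(\frac{c_{11}}{a_{11}}+\frac{c_{22}}{a_{22}}\right),
\end{equation}
where the second equality uses the algebraic identity $\tfrac{c_{11}b_{11}}{a_{11}}+\tfrac{c_{22}b_{22}}{a_{22}}=\tfrac{B}{\sqrt{a_{11}a_{22}}}$ (check it by expanding $B^2$). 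Similarly, your assertion that the $g_0$-boundary mean curvature of $\partial M$ at $O$ has ``the same order-$s^0$ term'' as under $\bar g_0$ is wrong: under $\bar g_0$ that term is $\pm 2(c_{11}+c_{22})$, while under $g_0$ it is $\pm\tfrac{2}{\sqrt{a^{33}}}\bigl(\tfrac{c_{11}}{a_{11}}+\tfrac{c_{22}}{a_{22}}\bigr)$, and these differ whenever $g_0\neq\bar g_0$. It is precisely the \emph{combination} $H^g_{s,\partial M}-H^{\bar g}_{s,\partial M}-2(c_{11}+c_{22})+\tfrac{2B}{\sqrt{a_{11}a_{22}a^{33}}}$ in which the $a_{ij},c_{ij}$-dependent $s^0$ pieces cancel against $\lim_{s\to 0}H^{g_0}_s$; neither summand separately equals $\lim H^{g_0}_s$. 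Your conclusion that one should import the Ko-Yao computation would patch this, but as written your outline asserts two incorrect equalities for the very constants the computation is supposed to determine, so a reader following your plan and not re-deriving everything would get the wrong answer.
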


Now, we consider
\[
	H_0:=\lim_{s\rightarrow 0} H_s^g(\hat{x}),
\]
which is well-defined by \eqref{eq:corMean} (the limit does not depend on the choice of $\hat{x}$.)

We have two subcases to consider.

If $H_0<\bar{h}(0)$, then we can use the continuation of $H_{\lambda,s}^g$ with respect to $\lambda$ and $s$, together with Proposition \ref{prop_GreaterAngleG0}, we can show the following results (cf. Proposition 5.10 in \cite{chai-scalar-2023-arxiv}).
\begin{proposition}
	\label{prop_GreaterMean}
	Suppose the metric $g$ can be written as $g=g_0+sh+O(s^2)$ where $g_0$ is a constant metric defined in \eqref{eq:G0Def}, and $h$ is a bounded symmetric two-tensor.
	If $H_0<\bar{h}(0)$, we can choose some $\lambda>0,s>0$ small such that $H_{\lambda,s}^g(\hat{x})>\bar{h}(\Sigma_{\lambda,s}(\hat{x}))$ for any $\hat{x} \in \partial E_\lambda$ and $\gamma_{\lambda,s}(\hat{x})<\bar{\gamma}_{\lambda,s}(\hat{x})$ for each $\hat{x} \in \partial E_\lambda$.
\end{proposition}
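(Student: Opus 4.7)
The plan is to carry out a parameter-selection argument, combining a $\lambda$-dependent extension of Proposition \ref{prop_meanAsy} with the angle asymptotics from Proposition \ref{prop_GreaterAngleG0}, in the spirit of Proposition 5.10 of \cite{chai-scalar-2023-arxiv}.

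First I would upgrade the expansion \eqref{eq:corMean} to a joint expansion in the parameters $(\lambda, s)$. Repeating the computation behind Proposition \ref{prop_meanAsy} with the $\lambda$-scaled quadratic coefficients $c_{ij}(b_{ij}(1+\lambda)-1)$ in the defining polynomial $G_{\lambda,s}$ should yield
\[
H^g_{\lambda, s}(\hat x) = \mathcal{H}(\lambda) + s\, \tilde L_\lambda(\hat x) + O(s^2),
\]
where the leading term $\mathcal{H}(\lambda)$ is $\hat x$-independent and continuous in $\lambda$ with $\mathcal{H}(0) = H_0$. Since the $t$-coordinate of $\Sigma_{\lambda, s}(\hat x)$ has magnitude $O(s^2)$ and $\bar h$ is smooth near $t=0$ (because $\psi(0)\ne 0$), one also has $\bar h(\Sigma_{\lambda, s}(\hat x)) = \bar h(0) + O(s^2)$. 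Consequently the mean curvature gap along $\Sigma_{\lambda, s}$ is governed, to leading order in $s$, by the sign of $\mathcal{H}(\lambda) - \bar h(0)$.

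From here I would use the hypothesis $H_0 < \bar h(0)$ together with the continuity of $\mathcal{H}(\lambda)$ to identify a range of small $\lambda > 0$ on which the required mean curvature comparison holds for all sufficiently small $s$ and uniformly over $\hat x \in \partial E_\lambda$. For any such fixed $\lambda > 0$, Proposition \ref{prop_GreaterAngleG0} supplies a threshold $s_0(\lambda) > 0$ below which the angle comparison also holds on $\partial E_\lambda$; taking $s$ inside the intersection of the two admissible windows then yields the desired pair of inequalities and completes the construction of the barrier.

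The main obstacle is controlling the subleading remainder $s\,\tilde L_\lambda(\hat x)$ in the mean curvature expansion uniformly in $\hat x \in \partial E_\lambda$: although the leading constant $\mathcal{H}(\lambda)$ carries a definite sign relative to $\bar h(0)$, the subleading piece genuinely depends on $\hat x$, and one must check that this dependence cannot overturn the sign at any boundary point. A secondary subtlety is that the angle expansion of Proposition \ref{prop_angleAsy} and the mean curvature expansion above must simultaneously deliver the desired inequalities for a common $(\lambda, s)$; this compatibility leans on the symmetric design of $\Sigma_{\lambda, s}$ via the ellipse $E_\lambda$ and on the $O(s^3)$-smallness of the $\hat x$-odd remainder $A(\hat x)s^3$ appearing in the angle asymptotics.
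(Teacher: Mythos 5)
The paper itself provides no argument for this proposition beyond the sentence preceding it (``use the continuation of $H^g_{\lambda,s}$ with respect to $\lambda$ and $s$, together with Proposition \ref{prop_GreaterAngleG0}'') and a pointer to Proposition 5.10 of \cite{chai-scalar-2023-arxiv}, so your proposal is supplying missing details rather than reproducing a written proof. The overall skeleton you propose --- a joint $(\lambda,s)$ expansion of $H^g_{\lambda,s}$ whose $\hat x$-independent leading term $\mathcal{H}(\lambda)$ is continuous with $\mathcal{H}(0)=H_0$, the observation that the $t$-coordinate of $\Sigma_{\lambda,s}$ is $O(s^2)$ so $\bar h\circ\Sigma_{\lambda,s}=\bar h(0)+O(s^2)$, and the intersection of the resulting $(\lambda,s)$-window with the one from Proposition \ref{prop_GreaterAngleG0} --- is exactly the intended ``continuation'' argument, and your two worry points (uniformity of the subleading $s\,\tilde L_\lambda(\hat x)$ term, and compatibility of the two windows) are the right ones.

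There is however a genuine sign problem that you do not confront, and it is precisely where your argument needs to be explicit. Your mechanism is: $\mathcal{H}(0)=H_0<\bar h(0)$ plus continuity of $\mathcal{H}(\lambda)$, hence $\mathcal{H}(\lambda)<\bar h(0)$ for small $\lambda>0$, hence $H^g_{\lambda,s}<\bar h\circ\Sigma_{\lambda,s}$ for small $s$. That yields $H<\bar h$, not the $H>\bar h$ stated in Proposition \ref{prop_GreaterMean}. Likewise, Proposition \ref{prop_GreaterAngleG0}, which you invoke verbatim, produces $\gamma_{\lambda,s}>\bar\gamma_{\lambda,s}$ on $\partial E_\lambda$, whereas the proposition asserts $\gamma_{\lambda,s}<\bar\gamma_{\lambda,s}$. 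As written, your proof therefore establishes the \emph{opposite} pair of inequalities to the ones you set out to prove. In fact the pair $H<\bar h$, $\gamma>\bar\gamma$ that your argument delivers matches the barrier actually constructed in Section \ref{sec:cone} (``$H-\bar h<0$ in $\Sigma_-$ and $\alpha>\bar\gamma$ along $\partial\Sigma_-$'') and matches the direction of Proposition \ref{prop_GreaterAngleG0}, so the stated inequalities in Proposition \ref{prop_GreaterMean} are almost certainly a consistent pair of typos; but a correct proof has to say this, because if one really tried to prove the literal statement, a mere continuity argument at $\lambda=0$ cannot reverse the sign of $H_0-\bar h(0)$, and Proposition \ref{prop_GreaterAngleG0} gives the wrong direction for the angle outright. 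Spelling out which pair of inequalities the construction actually yields --- and checking that this pair is the one used as a lower barrier in the proof of item (\ref{smooth}) --- is not optional here; it is the step your proposal leaves open.
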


Now, we focus on the case $H_0=\bar{h}(0)$.
In particular, it implies $a_{11}=a_{22}=1$ and $H^g_{\partial M}(O)=H^{\bar{g}}_{\partial M}(O)$.

Then, we need to construct a foliation near $O$.
We define the vector field $Y_s(\hat{x}):=\frac{\partial }{\partial s}\Sigma_s(\hat{x})$.
Given $u \in C^{1,\alpha}(\bar{E})\cap C^{2,\alpha}(E)$ where $E=E_0$, we can define the perturbation surface $\Sigma_{s,u}$ by
\[
	\Sigma_{s,u}:=
	\left\{ \Sigma_{s+\frac{u}{\left< Y_s(\hat{x}),N_s(\hat{x}) \right> }}(\hat{x}):\hat{x} \in E \right\}
\]
where $N_s(\hat{x})$ is the unit normal vector field of $\Sigma_s$.


We write $E=E_0$. Using the variational formula for mean curvatures and contact angles, we have
\begin{align*}
	\frac{H_{s,s^3u}-\bar{h}_{s,s^3u}}{s}={} & -\Delta_s^E u+\frac{H_s-\bar{h}_s}{s}+O(s),\\
	\frac{\cos \gamma_{s,s^3u}-\cos \bar{\gamma}_{s,s^3u}}{s^3}={} & 
	-2s\sqrt{(\hat{x}_1c_{11}+\hat{x}_2c_{12})^2+(\hat{x}_1c_{12}+\hat{x}_2c_{22})^2} \frac{\partial u}{\partial \nu_s^E}\\&+(A_{\partial M}(\eta_s,\eta_s)-\cos \gamma_s A(\nu_s,\nu_s)\\
&-\bar{A}_{\partial M}(\bar{\eta}_s,\bar{\eta}_s))u+
\frac{\cos \gamma_s-\cos \bar{\gamma}_s}{s^3}+O(s),
\end{align*}
where $\Delta_s^E$ denotes the Laplacian-Beltrami operator on $E$ under the metric $\frac{1}{s^2}\Sigma^*_s(g)$, and $\nu_s^E$ is the unit normal vector field of $\partial E$ under the metric $\frac{1}{s^2}\Sigma^*_s(g)$.
Here, we have used \eqref{eq:corAngleSin}.

By using the same argument for Proposition 5.27 in \cite{chai-scalar-2023-arxiv}, together with the asymptotic behavior of mean curvature, for each $s \in (0,\varepsilon)$ sufficiently small, we can find $u_s(\cdot)=u(\cdot,s)$ such that the mean curvature $H_{s,s^3u_s}$ is $\bar{h}_{s,s^3u}+s\lambda(s)$ where $\lambda(s)$ is a function only depends on $s$, the contact angle $\gamma_{s,s^3u_s}=\bar{\gamma}_{s,s^3u_s}$, and $u$ satisfies the following
\[
	\lim_{s\rightarrow 0} (u(\hat{x},s)+u(-\hat{x},s))=0
\]
for any $\hat{x} \in E$.
A finer analysis of $\lambda(s)$ will give $\lambda(s)<0$ for $s$ sufficiently small (cf. Proposition 5.28 in \cite{chai-scalar-2023-arxiv}), and it leads to the following.
\begin{proposition}
	\label{prop_meanConvexSphere}
	We can construct a surface $\Sigma_{-}$ near $O$ such that the mean curvature of $\Sigma_{-}$ is not greater than $\bar{h}$ and it has prescribed contact angle $\bar{\gamma}$ with $\partial M$.
\end{proposition}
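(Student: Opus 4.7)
The plan is to take $\Sigma_- := \Sigma_{s, s^3 u_s}$ for a suitable small $s > 0$, where $u_s$ is the graphical perturbation of $\Sigma_s$ produced by the implicit function theorem argument sketched immediately before the proposition. The strategy splits naturally in two: first solve simultaneously for a constant value of $H - \bar h$ on $\Sigma_{s, s^3 u_s}$ and for the exact capillary condition $\gamma = \bar\gamma$, then verify that the resulting constant $s\lambda(s)$ has the correct sign $\lambda(s)\le 0$.

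For the first step I would use the two variational expansions displayed just above the proposition. Their linearisations at $(u,s)=(0,0)$ reduce to the Neumann problem for the flat Laplacian on $(E, g_0|_E)$, which is an isomorphism from $\{u \in C^{2,\alpha}(\bar E)\cap C^{1,\alpha}(\bar E) : \int_E u = 0\}$ onto $\{f \in C^{0,\alpha}(E) : \int_E f = 0\}\times C^{1,\alpha}(\partial E)$, because the boundary coefficient $2\sqrt{(\hat x_1 c_{11} + \hat x_2 c_{12})^2 + (\hat x_1 c_{12} + \hat x_2 c_{22})^2}$ appearing in \eqref{eq:corAngleSin} is strictly positive on $\partial E$ (by positivity of $\mathrm{Hess}\, f$ at $p_-$). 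Applying the implicit function theorem verbatim as in Proposition 5.27 of \cite{chai-scalar-2023-arxiv}, we obtain for each sufficiently small $s$ a unique $u_s$ with $\int_E u_s = 0$ realising $\gamma_{s, s^3 u_s} = \bar\gamma_{s, s^3 u_s}$ along $\partial\Sigma_{s, s^3 u_s}$ and $H_{s, s^3 u_s} - \bar h_{s, s^3 u_s} \equiv s\lambda(s)$ for some function $\lambda$ of $s$ alone. An additional symmetry of the linearisation with respect to $\hat x \mapsto -\hat x$ gives the stated cancellation $\lim_{s\to 0}(u(\hat x,s)+u(-\hat x,s))=0$.

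The main obstacle is to show $\lambda(s)\le 0$ for all sufficiently small $s$. Following Proposition 5.28 of \cite{chai-scalar-2023-arxiv}, the idea is to integrate the identity $H_{s,s^3 u_s}-\bar h_{s,s^3 u_s}=s\lambda(s)$ over $\Sigma_{s, s^3 u_s}$ and then apply the Schläfli-type difference identity of Corollary \ref{difference schlaffli} to the slab bounded above by $\Sigma_{s, s^3 u_s}$ and below by a small coordinate cap near $O$, viewing $g$ and $\bar g$ as two perturbations of the common constant metric $g_0$. This converts $s\lambda(s)\,|\Sigma_{s, s^3 u_s}|$ into an interior integral of $(R_g - R_{\bar g}) + \langle \operatorname{Ric}_{g_0}, g-\bar g\rangle_{g_0}$ plus boundary contributions on $\partial_s M$ controlled by $H_g\ge H_{\bar g}$, modulo an $O(s^2)$ remainder; the Ricci term vanishes because $g_0$ is flat. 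The scalar and mean curvature comparisons then make the right-hand side of the desired sign. The technically delicate point is that we are in the subcase $H_0=\bar h(0)$, which already forces $a_{11}=a_{22}=1$ and $H_{\partial M}^g(O)=H_{\partial M}^{\bar g}(O)$; consequently the flat-metric approximation is sharp to the required order and no leading term obstructs the sign. Combined with $|\Sigma_{s, s^3 u_s}|\sim s^2$, this yields $\lambda(s)\le 0$ for all small $s$, so that $\Sigma_-:=\Sigma_{s, s^3 u_s}$ is a lower barrier in the sense of Definition \ref{barrier condition}.
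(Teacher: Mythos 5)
Your first step (solving via the implicit function theorem for $u_s$ so that $H_{s,s^3u_s}-\bar h_{s,s^3u_s}\equiv s\lambda(s)$ and $\gamma_{s,s^3u_s}=\bar\gamma_{s,s^3u_s}$) matches what the paper does, citing Proposition 5.27 of \cite{chai-scalar-2023-arxiv}. The gap is in the sign analysis of $\lambda(s)$.

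Your argument rests on the claim that in the subcase $H_0=\bar h(0)$ ``the flat-metric approximation is sharp to the required order.'' But $H_0=\bar h(0)$ forces only $a_{11}=a_{22}=1$ (and, by positive semi-definiteness of $g_0-\bar g_0$, also $a_{13}=a_{23}=0$); the coefficient $a_{33}$ is completely unconstrained beyond $a_{33}\ge 1$. Indeed, in \eqref{eq:corMean} the combination $\tfrac{2B}{\sqrt{a_{11}a_{22}a^{33}}}$ is independent of $a^{33}$ once $B$ is expanded, so varying $a_{33}>1$ does not shift $H_0$ at all. Hence $g_0=a_{33}\,ds^2+dx_1^2+dx_2^2$ and $\bar g_0=ds^2+dx_1^2+dx_2^2$ need not coincide. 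Corollary \ref{difference schlaffli} requires both $g_1=g$ and $g_2=\bar g$ to be $o(1)$-perturbations of a single background $\hat g$; here $g\to g_0$ and $\bar g\to\bar g_0$, so when $a_{33}>1$ there is no admissible $\hat g$, and the ``$O(s^2)$'' remainder in the corollary degenerates to $O(1)$, swamping the signal $s\lambda(s)\,|\Sigma_{s,s^3u_s}|\sim s^3\lambda(s)$.

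There is also a second mismatch. The left-hand side of Corollary \ref{difference schlaffli} involves $\int_\Sigma(H_{g_2}-H_{g_1})$ where $H_{g_2}$ is the genuine $\bar g$-mean curvature of $\Sigma$, whereas $\lambda(s)$ is defined via $H_g-\bar h$ with $\bar h=2\psi'/\psi$ the ambient prescribed function restricted to $\Sigma$. These two coincide precisely when $\Sigma$ is a $t$-level set --- which is why the Schl\"afli argument applies cleanly in Section \ref{sec:cone} --- but the paraboloid-type surfaces $\Sigma_{s,s^3u}$ of Section \ref{sec:spherical_part} are not level sets of $t$, and $H_{\bar g}(\Sigma_{s,s^3u})\neq\bar h|_{\Sigma_{s,s^3u}}$ in general. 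Your sketch does not supply the bridge between these quantities. The paper defers the sign of $\lambda(s)$ to the asymptotic analysis in Proposition 5.28 of \cite{chai-scalar-2023-arxiv}; whatever its internal mechanics, a Schl\"afli-type route as you set it up cannot close without separately resolving the unconstrained $a_{33}$ and the $\bar h$-versus-$H_{\bar g}$ discrepancy.
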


\begin{proof}
	[Proof of item (\ref{smooth}) of Theorem \ref{rigidity depending on structures}]
	We can use Proposition \ref{prop_GreaterMean} or Proposition \ref{prop_meanConvexSphere} depending on the value of $H_0$ to construct a barrier surface $\Sigma_{-}$ with mean curvature not greater than $\bar{h}$ and prescribed contact angle $\bar{\gamma}$ with $\partial M$.
	Then, we can use item (\ref{item simple}) of Theorem \ref{rigidity depending on structures} to extend the rigidity to all of $M$.
\end{proof}

\

\bibliographystyle{alpha} 
\bibliography{rigidity-rotationally-symmetric-in-warped-products}
\end{document}